\documentclass[10pt]{article}
\usepackage[utf8]{inputenc}
\usepackage{epsfig}
\usepackage{color}
\usepackage[british,english]{babel}
\usepackage{amsthm}
\usepackage{amsmath}
\usepackage{amsfonts}
\usepackage{amssymb}
\usepackage{graphicx}
\usepackage{subcaption} 
\usepackage{array}
\usepackage{rotating}
\usepackage{multirow}

\usepackage{fancyhdr}

\newtheorem{corollary}{Corollary}[section]
\newtheorem{definition}[corollary]{Definition}

\newtheorem{lemma}[corollary]{Lemma}
\newtheorem{proposition}[corollary]{Proposition}
\newtheorem{remark}[corollary]{Remark}
\newtheorem{theorem}[corollary]{Theorem}

\newcommand{\R}{\mathbb{R}}

\newcommand{\vps}{\varepsilon}
\newcommand{\eps}{\varepsilon}

\newcommand{\etainf}{\eta_\infty}

\numberwithin{equation}{section}

\date{}
\begin{document}
\title{Influence of the Reynolds number on non-Newtonian flow in thin porous media}

\maketitle

\vskip-30pt
  \centerline{Mar\'ia ANGUIANO\footnote{Departamento de An\'alisis Matem\'atico. Facultad de Matem\'aticas.
Universidad de Sevilla, 41012 Sevilla (Spain)
anguiano@us.es},
Matthieu BONNIVARD\footnote{Ecole Centrale de Lyon, CNRS, INSA Lyon, Universite Claude Bernard Lyon 1, Université Jean Monnet, ICJ UMR5208,
69130 Ecully, France,
matthieu.bonnivard@ec-lyon.fr}, and  Francisco J. SU\'AREZ-GRAU\footnote{Departamento de Ecuaciones Diferenciales y An\'alisis Num\'erico. Facultad de Matem\'aticas. Universidad de Sevilla, 41012 Sevilla (Spain) fjsgrau@us.es}}

 \renewcommand{\abstractname} {\bf Abstract}
\begin{abstract} 


We study the effect of the Reynolds number on the flow of a generalized Newtonian fluid through a thin porous medium in $\R^3$. This medium is a domain of thickness $\eps\ll 1$, which is perforated by periodically distributed solid cylinders of size $\eps$. We consider the nonlinear stationary Navier–Stokes system with viscosity following the Carreau law. Using the tools of homogenisation theory and assuming that the Reynolds number scales as $\eps^{-\gamma}$, where $\gamma$ is a real constant, we prove the existence of a critical Reynolds number of order $1/\eps$, in the sense that the inertial term in the Navier–Stokes system has no influence in the limit if the Reynolds number is of order smaller than or equal to $1/\eps$ (i.e., $\gamma=1$). In this case, we derive linear or nonlinear Darcy's laws connecting velocity to pressure gradient. Conversely, we expect a contribution from the inertial term in the homogenised problem if the Reynolds number is greater than $1/\eps$. Finally, we propose a numerical method to solve nonlinear Darcy's law describing effective flow in the critical case and demonstrate its practical applicability by applying it to several examples.

\end{abstract}

 {\small \bf AMS classification numbers: } 76A05, 76M50, 76A20, 35B27.
 
 {\small \bf Keywords: } Navier-Stokes equations, asymptotic analysis, non-Newtonian fluid, Carreau law, thin porous media.
  \section {Introduction}
  
In many industrial processes it is necessary to model the behavior of generalized Newtonian fluids through thin porous media, for instance in injection moulding of melted polymers, flow of oils, muds, etc. (see Prat and Aga${\rm \ddot{e}}$sse \cite{Prat} or Yeghiazarian {\it et al.} \cite{Rosati} for more details). An important question that arises is to know for what values of the Reynolds number the inertial terms of the Navier-Stokes system are negligible, and, in this way, to be able to model such flows by means of Darcy's laws, either linear or nonlinear (see for instance Dybbs and Edwards \cite{Dybbs}). This question is addressed in this article for a generalized Newtonian fluid flow governed by the nonlinear stationary Navier-Stokes system with viscosity following the Carreau law through thin porous media $\Omega_\varepsilon\subset \mathbb{R}^3$, which is a domain of thickness $\varepsilon$ perforated by periodically distributed solid cylinders of size $\varepsilon$, with $\varepsilon\ll 1$. Our analysis points a critical Reynolds number of order $1/\varepsilon$ above which the inertial terms must be taken into account in the modeling and, as a consequence, such fluid flow cannot be modeled by means of Darcy's laws. Furthermore, for the rest of the Reynolds number values, we obtain that the inertial terms are negligible and we derive different models.  
 
The incompressible generalized Newtonian fluids is a type of non-Newtonian fluids which are characterized by the viscosity depending on the principal invariants of the  symmetric stretching tensor $\mathbb{D}[u]$. If $u=u(x',x_3)$ is the velocity, $p$ the pressure and $Du$ the gradient velocity tensor, $\mathbb{D}[u]=(Du+D^tu)/2$  denotes the  symmetric stretching tensor  and $\sigma$ the stress tensor given by $\sigma=-pI+2\eta_{r}\mathbb{D}[u]$. The viscosity $\eta_r$ is constant for a Newtonian fluid but dependent of the shear rate, i.e., $\eta_r=\eta_{r}(\mathbb D[u])$, for viscous generalized Newtonian fluids. The deviatoric stress tensor $\tau$, i.e., the part of the total stress tensor that is zero at equilibrium, is then a nonlinear function of the shear rate $\mathbb D[u]$, i.e., $
\tau=\eta_r(\mathbb D[u]) \mathbb D[u]$  (see Barnes {\it et al.} \cite{Barnes}, Bird {\it et al.} \cite{Bird} and Mikeli\'c \cite{Mikelic1} for more details).

An important model is the well-known  {\it Carreau law}, which will be considered in this article and is defined by
\begin{equation}\label{Carreau}\eta_r(\mathbb{D}[u])=(\eta_0-\eta_\infty)(1+\lambda|\mathbb{D}[u]|^2)^{{r\over 2}-1}+\eta_\infty,\quad 1<r<+\infty,\quad \eta_0> \eta_\infty>0,\quad \lambda>0,
\end{equation} 
where $\eta_\infty$ is the high-shear-rate limit of the viscosity,   the parameter $\lambda$ is a time constant and $r-1$ is  describes the slope in the power law region. The matrix norm $|\cdot|$ is defined by $|\xi|^2=Tr(\xi \xi^t)$ with $\xi \in \mathbb{R}^3$.  

Observe that for $r=2$, the Carreau law (\ref{Carreau}) is linear, i.e., $\eta_2(\mathbb{D}[u])=\eta_0$, which represents {\it Newtonian fluids}.
We recall that the generalized Newtonian fluids are classified in two main categories (see Saramito \cite[Chapter 2]{Saramito}): 
\begin{itemize}
\item[--]  {\it Pseudoplastic} or {\it shear thinning} fluids, where the viscosity decreases with the shear rate, which correspond to the case of flow index  $1<r<2$. 

\item[--]  {\it Dilatant} or {\it shear thickening} fluids, where the viscosity increases with the shear rate, which correspond to the case of flow index  $r>2$.
\end{itemize}
In this article, we consider a thin porous medium $\Omega_\varepsilon =\omega_\varepsilon\times (0,\varepsilon)\subset \R^3$ of small height $\varepsilon$, perforated by periodically distributed solid cylinders of diameter $\varepsilon$. We assume that a non-Newtonian fluid, whose viscosity follows the {\it Carreau law}~\eqref{Carreau}, flows through this porous medium. We model this flow by the following stationary Navier-Stokes system, completed with Dirichlet boundary conditions on the exterior boundary and the cylinders:
\begin{equation}\label{system_intro}
\left\{\begin{array}{rl}\displaystyle
\medskip
-{\color{black} \mu \varepsilon^\gamma}{\rm div}\left(\eta_r(\mathbb{D}[u])\mathbb{D}[u]\right)+(  u\cdot \nabla)u+ \nabla p = f & \hbox{in }\Omega_\varepsilon,\\
\medskip
\displaystyle
{\rm div}\,u=0& \hbox{in }\Omega_\varepsilon.
\end{array}\right.
\end{equation}
In the above system, $f$ is an external body force and the Reynolds number is proportional to $\mu^{-1} \varepsilon^{-\gamma}$, where the exponent $\gamma\in \R$ is fixed.

In \cite{Tapiero2} Boughanim and Tapi\'ero consider the system (\ref{system_intro}) where $\Omega_\varepsilon$ is a thin slab of thickness $\eps$. By using dimension reduction and homogenization techniques, they show that there is a critical Reynolds number of order $1/\varepsilon^{3\over 2}$ (i.e., $\gamma=3/2$) for pseudoplastic fluids and of order $1/\varepsilon^{1+r\over 2}$ (i.e., $\gamma={1+r\over 2}$) for dilatant fluids. The inertial term of the Navier-Stokes system (\ref{system_intro}) has no influence at the limit for the Reynolds number of order smaller than critical Reynolds number and they studied the limit when the thickness tends to zero. They prove that the averaged limit velocity satisfies according to the order of Reynolds number and the power $r$ of the Carreau law, a linear or nonlinear 2D Reynold's law of power or Carreau type.

On the other hand, in \cite{Bourgeat1} Bourgeat and  A. Mikeli\'c consider the system (\ref{system_intro}) where $\Omega_\varepsilon$ is a porous medium obtained by the periodic repetition of an elementary cell of size $\varepsilon$. Using the tools of homogenization theory, they show that there is a critical Reynolds number of order $1/\varepsilon^{3\over 2}$ (i.e., $\gamma=3/2$) for pseudoplastic or Newtonian fluids and of order $1/\varepsilon^{1+{r\over 4}}$ (i.e., $\gamma=1+{r\over 4}$) for dilatant fluids. The inertial term of the Navier-Stokes system (\ref{system_intro}) has no influence at the limit for the Reynolds number of order smaller than critical Reynolds number and depending on the Reynolds number they derive several averaged momentum equations, linear or nonlinear and nonlocal connecting the velocity to the pressure gradient.

However, to our knowledge, there does not seem to be in the literature any study of the asymptotic behavior of a solution of (\ref{system_intro}) where $\Omega_\varepsilon$ is the thin porous medium described above. To perform such analysis, we will combine the relevant methods used to handle porous media and thin films. For this purpose, we first perform a change of variables which consists in stretching in the $x_3$-direction by a factor $1/\varepsilon$. As in \cite{Anguiano_SuarezGrau}, we use the dilatation in the variable $x_3$, i.e., $y_3=x_3/\varepsilon$, in order to have the functions $\tilde u_\varepsilon$ defined in the open set fixed height $\widetilde{\Omega}_{\varepsilon}$. In this sense, we have that the inertial term is transformed into
\begin{eqnarray}\label{inertial_term_intro}
-\int_{\widetilde \Omega_\varepsilon} \tilde u_\varepsilon\tilde \otimes \tilde u_\varepsilon:D_{x'} v\,dx'dy_3
\displaystyle +{1\over \varepsilon}\left(\int_{\widetilde \Omega_\varepsilon}\partial_{y_3} \tilde u_{\varepsilon,3}\tilde  u_\varepsilon  v\,dx'dy_3+ \int_{\widetilde \Omega_\varepsilon}\tilde u_{\varepsilon,3}\partial_{y_3} \tilde u_\varepsilon\,v\,dx'dy_3\right)\,,
\end{eqnarray}
where $(u \tilde \otimes w)_{ij}=u_i w_j$, $i=1,2$, $j=1,2,3$ and $v$ is a test function.

Later, due to the perforations of the domain, we have to use the restricted operator $\tilde R_q^{\varepsilon}$ in order to extend the pressure to a fixed domain $\Omega$ without perforations (see Lemma \ref{restriction_operator} for more details). Then, in order to find a critical Reynolds number, we have to prove some a priori estimates for the extended pressure and we have to analyze the term (\ref{inertial_term_intro}) with $v=\tilde R_q^{\varepsilon}$.  We would like to highlight that analyzing this inertial term involves additional (nontrivial) difficulties which does not allow us to carry out a complete study of (\ref{system_intro}) for all values $r$. We can successfully solve this problem using advanced interpolation techniques which allow us to find a critical Reynolds number.
In particular, we get the following progress:
\begin{itemize}
\item[(i)]  For {\it Pseudoplastic} or {\it Newtonain} fluids: if we argue in a standard way to obtain estimates of the extension of the pressure, we get estimates in $L^2(\Omega)$ if $\gamma\leq 3/4$. But we have improved this result by obtaining estimates in $L^{3\over 2\gamma}(\Omega)$ if $3/4<\gamma\leq 1$ (see Proposition \ref{Estimates_extended_lemma} for more details). In this sense, the inertial term of the Navier-Stokes system has no influence at the limit for the Reynolds number of order smaller or equal than $1/\varepsilon$ (i.e., $\gamma\leq 1$) and we derive the limit laws. In particular, for pseudoplastic fluids, if $\gamma<1$ we obtain a linear 2D Darcy'law with viscosity $\eta_0$ and if $\gamma=1$ we obtain a nonlinear 2D Darcy's law of Carreau type. For Newtonian fluids, if $\gamma\leq 1$, we get a linear 2D Darcy's law with viscosity $\eta_0$ (see Theorem \ref{mainthmPseudo} for more details).
\item[(ii)]  For {\it Dilatant} fluids: if we argue in a standard way to obtain estimates of the extension of the pressure, we get estimates in $L^{r'}(\Omega)$, with $1/r+1/r'=1$, for $r\ge 9/4$ and $\gamma\leq 1$. But we have got a complete study for all $r$ because we have improved this result by obtaining estimates in $L^{3r\over 6-r}(\Omega)$ for $2<r<9/4$ and $\gamma\leq 1$ (see Proposition \ref{Estimates_extended_lemma2} for more details). In this sense, the inertial term of the Navier-Stokes system has no influence at the limit for the Reynolds number of order smaller or equal than $1/\varepsilon$ (i.e., $\gamma\leq 1$) and we derive the limit laws. In particular, if $\gamma<1$ we obtain a linear 2D Darcy'law with viscosity $\eta_0$ and if $\gamma=1$ we obtain a nonlinear 2D Darcy's law of Carreau type (see Theorem \ref{mainthmDilatant} for more details).

\end{itemize}

Taking into account that $1/\varepsilon^{\gamma}$ is the Reynolds number, in Table \ref{table_Reynold}, we summarize the critical Reynolds number of the Carreau fluid  governing by (\ref{system_intro}) depending on the type of domain $\Omega_\varepsilon$ and the type of fluid:\\

\begin{table}[h!]\centering
\begin{tabular}{|c||c|c|}
\hline
 &   $1<r\leq 2$  &  $r>2$\\
 \hline \hline
 {\small Thin slab} & $1/\varepsilon^{{3\over 2}}$ & $1/\varepsilon^{{1+r\over 2}}$\\  \cline{1-3}
  {\small Porous media} & $1/\varepsilon^{{3\over 2}}$ & $1/\varepsilon^{1+{r\over 4}}$\\  \cline{1-3}
  {\small Thin porous media} &  $1/\varepsilon$& $1/\varepsilon$\\ \cline{1-3}
  \hline
\end{tabular} 
\caption{Critical Reynolds number of the Carreau fluid depending on the type of domain and the type of fluid.}
\label{table_Reynold}
\end{table}

\begin{remark}
If the inertial term did not exist in (\ref{system_intro}), i.e., if we consider the Stokes system, then we do not have the restriction $\gamma\leq 1$ and we have a complete study for all $1<r<+\infty$ and for all $\gamma\in \R$. In addition to the laws already obtained for $\gamma\leq 1$, we obtain a linear 2D Darcy's law with viscosity $\eta_\infty$ for pseudoplastic fluids and $\gamma>1$, a linear 2D Darcy's law with viscosity $\eta_0$ for Newtonian fluids and $\gamma>1$ and a nonlinear 2D Darcy's law of power law type for dilatant fluids and $\gamma>1$ (see the recent articles Anguiano {\it et al.} \cite{Carreau_Ang_Bonn_SG,Carreau_Ang_Bonn_SG2} for more details).
\end{remark}

We finish the introduction with a list of references of other recent studies concerning thin porous media. Some stationary models for different fluids are obtained in Anguiano and Su\'arez-Grau \cite{Anguiano_SuarezGrau_Derivation, Anguiano_SuarezGrau_CMS, Anguiano_SG_Net}, Fabricius {\it et al.} \cite{Fabricius, Fabricius2}, Zhengan and Hongxing \cite{ZZ}, and some non-stationary models are developed in Anguiano \cite{Anguiano_MMAS, Anguiano_ZAMM, Anguiano_Derivation, Anguiano_European, Anguiano_BMMS}. The case of a Bingham flow is considered in Anguiano and Bunoiu \cite{Ang-Bun,Ang-Bun2} and the case of micropolar fluids in Su\'arez-Grau \cite{SG_micropolar, SG_MN}. The two-phase flow problem in thin porous media domains of Brinkman type has been considered in Armiti-Juber \cite{Armiti} and an approach for effective heat transport in thin porous media has been derived by  Scholz and Bringedal \cite{Scholz}.

The structure of the paper is as follows. In Section \ref{sec:main} we introduce the domain, make the statement of the problem and give the main results (Theorems \ref{mainthmPseudo} and \ref{mainthmDilatant}). The  proofs of the main results are provided in Section \ref{sec:proofs}. Section~\ref{Sect:Numerics} is devoted to the numerical resolution of the nonlinear Darcy's law~\eqref{thm:system_gamma1}. We finish the paper with a list of references.

\section{Setting of the problem and main result}\label{sec:main}
\paragraph{Geometrical setting.} The periodic porous medium is defined by a domain $\omega$ and an associated microstructure, or periodic cell $Y^{\prime}=(-1/2,1/2)^2$, which is made of two complementary parts: the fluid part $Y^{\prime}_{f}$, and the solid part $T^{\prime}$ ($Y^{\prime}_f  \bigcup T^{\prime}=Y^\prime$ and $Y^{\prime}_f  \bigcap T^{\prime}=\emptyset$). More precisely, we assume that $\omega$ is a smooth, bounded and connected subset of $\R^2$, and that $T^{\prime}$ is an open connected subset of $Y^\prime$ with a smooth boundary $\partial T^\prime$, such that $\overline{T^\prime}$ is strictly included  in $Y^\prime$.

The microscale of the porous medium is a small positive number ${\varepsilon}$. The domain $\omega$ is covered by a regular square mesh of size ${\varepsilon}$: for $k^{\prime}\in \mathbb{Z}^2$, each cell $Y^{\prime}_{k^{\prime},{\varepsilon}}={\varepsilon}k^{\prime}+{\varepsilon}Y^{\prime}$ is divided in a fluid part $Y^{\prime}_{f_{k^{\prime}},{\varepsilon}}$ and a solid part $T^{\prime}_{k^{\prime},{\varepsilon}}$, i.e., is similar to the unit cell $Y^{\prime}$ rescaled to size ${\varepsilon}$. We define $Y=Y^{\prime}\times (0,1)\subset \R^3$ and divide it in a fluid part $Y_{f}=Y'_f\times (0,1)$ and a solid part $T=T'\times(0,1)$. Similarly, we set $Y_{k^{\prime},{\varepsilon}}=Y^{\prime}_{k^{\prime},{\varepsilon}}\times (0,1)\subset \R^3$ and divide it in a fluid part $Y_{f_{k^{\prime}},{\varepsilon}}$ and a solid part $T_{{k^{\prime}},{\varepsilon}}$.

We denote by $\tau(\overline T'_{k',\varepsilon})$ the set of all translated images of $\overline T'_{k',\varepsilon}$. The set $\tau(\overline T'_{k',\varepsilon})$ represents the obstacles in $\R^2$.

The fluid part of the bottom $\omega_{\varepsilon}\subset \R^2$ of the porous medium is defined by $\omega_{\varepsilon}=\omega\backslash\bigcup_{k^{\prime}\in \mathcal{K}_{\varepsilon}} \overline{T^{\prime}_{{k^{\prime}},{\varepsilon}}},$ where $\mathcal{K}_{\varepsilon}=\{k^{\prime}\in \mathbb{Z}^2: Y^{\prime}_{k^{\prime}, {\varepsilon}} \cap \omega \neq \emptyset \}$.  The whole fluid part $\Omega_{\varepsilon}\subset \R^3$ of the thin porous medium is then given by 
\begin{equation*}
\Omega_{\varepsilon}=\{  (x_1,x_2,x_3)\in \omega_{\varepsilon}\times \R: 0<x_3<\varepsilon \}.
\end{equation*}
We assume that the obstacles $\tau(\overline T'_{k',\varepsilon})$ do not intersect the boundary $\partial\omega$ and we denote by $S_\varepsilon$ the set of the solid cylinders contained in $\Omega_\varepsilon$, i.e., $S_\varepsilon=\bigcup_{k^{\prime}\in \mathcal{K}_{\varepsilon}} T'_{k^\prime, \varepsilon}\times (0,\varepsilon)$.
\\

We define 
\begin{equation*}
\widetilde{\Omega}_{\varepsilon}=\omega_{\varepsilon}\times (0,1), \quad \Omega=\omega\times (0,1), \quad Q_\varepsilon=\omega\times (0,\varepsilon).
\end{equation*}
We observe that $\widetilde{\Omega}_{\varepsilon}=\Omega\backslash \bigcup_{k^{\prime}\in \mathcal{K}_{\varepsilon}} \overline {T_{{k^{\prime}}, {\varepsilon}}},$ and we define $T_\varepsilon=\bigcup_{k^{\prime}\in \mathcal{K}_{\varepsilon}} T_{k^\prime, \varepsilon}$ as the set of solid cylinders contained in $\widetilde \Omega_\varepsilon$.

\paragraph{Extra notation} The points $x\in\R^3$ will be decomposed as $x=(x^{\prime},x_3)$ with $x^{\prime}=(x_1,x_2)\in \R^2$, $x_3\in \R$. We also use the notation $x^{\prime}$ to denote a generic vector in $\R^2$. $L^{q}_0$  is the space of functions in $L^q$  with zero mean value. For $1<q<\infty$, we denote by $C^\infty_{\#}(Y)$ the space of infinitely differentiable functions in $\R^2\times (0,1)$, which are $Y'$-periodic. Then, $L^q_{\#}(Y)$ (resp. $W^{1,q}_{\#}(Y)$) stands for the completion of $C^\infty_{\#}(Y)$ in the $L^q(Y)$ norm (resp., in the $W^{1,q}(Y)$ norm). $L^q_{0,\#}(Y)$ is the space of functions in $L^q_{\#}(Y)$ with zero mean value.

\paragraph{Statement of the problem.}  We consider the stationary Navier-Stokes system in the thin porous medium $\Omega_\varepsilon$,  with a nonlinear viscosity following the {\it Carreau law}~\eqref{Carreau}, and scaled by a factor $\mu \eps^\gamma$, where $\mu$ is a positive constant and $\gamma$ is a fixed real exponent. We assume that the flow is driven by an external force $f$ and impose Dirichlet boundary conditions on both the exterior boundary $\partial Q_\varepsilon$ and the cylinders' boundary $\partial S_\varepsilon$. Hence, the model reads
\begin{equation}\label{1}
\left\{\begin{array}{rl}\displaystyle
\medskip
-{\color{black} \mu  \varepsilon^\gamma}{\rm div}\left(\eta_r(\mathbb{D}[u_\varepsilon])\mathbb{D}[u_\varepsilon]\right)+(  u_\varepsilon\cdot \nabla)u_\varepsilon+ \nabla p_\varepsilon = f & \hbox{in }\Omega_\varepsilon,\\
\medskip
\displaystyle
{\rm div}\,u_\varepsilon=0& \hbox{in }\Omega_\varepsilon,\\
\medskip
\displaystyle
u_\varepsilon=0& \hbox{on }\partial Q_\varepsilon\cup \partial S_\varepsilon,
\end{array}\right.
\end{equation}
As is usual when dealing with thin domains, we neglect the vertical component of $f$ and its dependence on the third variable $x_3$ (see~\cite{Tapiero2} for more details). In other words, we assume that $f$ is of the form
\begin{equation*}
f (x)=(f'(x'),0)\quad \hbox{with }f'\in L^\infty(\omega)^2.
\end{equation*}

\begin{remark}Under the previous assumptions, the classical theory (see for instance~\cite{Tapiero2,Bourgeat1, Lions2}) ensures the existence of at least one weak solution $(u_\varepsilon, p_\varepsilon)\in H^1_0(\Omega_\varepsilon)^3\times L^{2}_0(\Omega_\varepsilon)$, for $1< r\leq 2$, and $(u_\varepsilon, p_\varepsilon)\in W^{1,r}_0(\Omega_\varepsilon)^3\times L^{r'}_0(\Omega_\varepsilon)$ with $1/r+1/r'=1$, for $r> 2$.
\end{remark}

Our goal is to study the asymptotic behavior of $(u_{\varepsilon}, p_{\varepsilon})$ when $\varepsilon$  tends to zero. For this purpose, we use the dilatation in the variable $x_3$ as follows
\begin{equation}\label{dilatacion}
y_3=\frac{x_3}{\varepsilon},
\end{equation}
in order to redefine the velocity and pressure on the set $\widetilde\Omega_{\varepsilon}$ with fixed height $1$. Namely, we define $\tilde{u}_{\varepsilon}$ and $\tilde{p}_{\varepsilon }$ by $$\tilde{u}_{\varepsilon }(x^{\prime},y_3)=u_{\varepsilon }(x^{\prime}, \varepsilon y_3),\text{\ \ }\tilde{p}_{\varepsilon }(x^{\prime},y_3)=p_{\varepsilon }(x^{\prime}, \varepsilon y_3), \text{\ \ } a.e.\text{\ } (x^{\prime},y_3)\in \widetilde{\Omega}_{\varepsilon }.$$
In order to write the equations satisfied by these rescaled functions, we introduce the operators $\mathbb{D}_{\varepsilon}$, $D_{\varepsilon}$, ${\rm div}_{\varepsilon}$ and $\nabla_{\varepsilon}$, defined for any vectorial function $v=(v',v_3)$ and any scalar function $w$ by
\begin{equation*}
\mathbb{D}_{\varepsilon}\left[v\right]=\frac{1}{2}\left(D_{\varepsilon} v+D^t_{\varepsilon} v \right),
\end{equation*}
\begin{equation*}
(D_{\varepsilon}v)_{i,j}=\partial_{x_j}v_i\text{\ for \ }i=1,2,3,\ j=1,2,\quad 
(D_{\varepsilon}v)_{i,3}=\varepsilon^{-1}\partial_{y_3}v_i\text{\ for \ }i=1,2,3,
\end{equation*}
\begin{equation*}
{\rm div}_{ \varepsilon}v={\rm div}_{x^{\prime}}v^{\prime}+  \varepsilon^{-1}\partial_{y_3}v_3,\quad
\nabla_{ \varepsilon}w=(\nabla_{x^{\prime}}w,  \varepsilon^{-1}\partial_{y_3}w)^t.
\end{equation*}

Using the above operators, the pair $(\tilde{u}_{\varepsilon }, \tilde{p}_{\varepsilon })$ is a solution of the following system:
\begin{equation}
\left\{
\begin{array}
[c]{r@{\;}c@{\;}ll}%
\medskip
\displaystyle -{\color{black} \mu\varepsilon^\gamma} {\rm div}_{ \varepsilon} \left(  \eta_r(\mathbb{D}_\varepsilon[\tilde u_\varepsilon])\mathbb{D}_{ \varepsilon}\left[\tilde{u}_{\varepsilon}\right] \right)+( \tilde  u_\varepsilon\cdot \nabla_\varepsilon)\tilde u_\varepsilon+ \nabla_{ \varepsilon} \tilde{p}_{\varepsilon } &
= &
f \text{\ \ in  }\widetilde{\Omega}_{\varepsilon },\\
\medskip
{\rm div}_{\varepsilon} \tilde{u}_{\varepsilon } & = & 0 \text{\ \ in  }\widetilde{\Omega}_{\varepsilon },\\
\medskip
\tilde{u}_{\varepsilon }& = &0 \text{\ \ on  } \partial {\Omega}\cup \partial T_\varepsilon.
\end{array}
\right. \label{2}%
\end{equation}
Our goal then is to describe the asymptotic behavior of   $(\tilde{u}_{\varepsilon}, \tilde{p}_{\varepsilon})$. Since these functions are defined on a varying set $\widetilde{\Omega}_{\varepsilon}$, the first step is to extend them to the whole domain $\Omega$. We denote these extensions by $(\tilde{u}_{\varepsilon}, \tilde{P}_{\varepsilon})$ (in particular, we use the same notation for the velocity in $\widetilde\Omega_\varepsilon$ and its continuation in $\Omega$).

Our main results are given by the following theorems. 

\begin{theorem}[Pseudoplastic or Newtonian fluids]\label{mainthmPseudo}
Let $1< r\leq 2$,  $\gamma\leq1$ and $C(r)$ be defined as follows:
\begin{equation}\label{Cr_case_pseudo}
C(r)=\left\{\begin{array}{rcl}
2&\hbox{if}& \gamma\leq \displaystyle{3\over 4},
\\
\\
\displaystyle {3\over 2\gamma}&\hbox{if}& \displaystyle{3\over 4}<\gamma\leq 1.
\end{array}\right.
\end{equation}
Then, there exist $\tilde u\in H^1_0(0,1;L^2(\omega)^3)$, with $\tilde u=0$ on $\omega\times \{0,1\}$ and $\tilde u_3\equiv 0$, and $\tilde P\in L^{C(r)}_0(\omega)$, such that the extension $(\tilde u_{\varepsilon},\tilde P_{\varepsilon})$ of a solution of (\ref{2}) satisfies the following convergences:
$$\varepsilon^{\gamma-2}\tilde u_{\varepsilon }\rightharpoonup \tilde  u\quad\hbox{weakly in } H^1(0,1;L^2(\omega)^3),\quad \tilde   P_{\varepsilon }\to \tilde P\quad\hbox{strongly in }L^{C(r)}(\Omega).$$
Besides, define $V:\omega\rightarrow \R^3$ by $V(x')=\int_0^1\tilde u(x',y_3)\,dy_3$. Then, $V_3\equiv 0$ and depending on the values of $r$ and $\gamma$, the filtration velocity $V'$ is linked to $\widetilde P$ through a different two-dimensional Darcy law.
\begin{itemize}
\item[--] For pseudoplastic fluids ($1< r<2$):
\begin{itemize}
\item[(i)] If $\gamma<1 $, $(V', \tilde P)\in L^2(\omega)^2\times (L^{2}_0(\omega)\cap H^1(\omega))$ is the unique solution of the linear effective 2D Darcy's law 
\begin{equation}\label{thm:system}
\left\{\begin{array}{l}
\medskip
\displaystyle
V'(x')={1\over {\color{black}\mu}\eta_0}\mathcal{A}\left(f'(x')-\nabla_{x'}\tilde P(x')\right)\quad  \hbox{in }\omega.\\
\medskip
\displaystyle
{\rm div}_{x'} V'(x')=0\ \hbox{ in }\omega,\quad V'(x')\cdot n=0\ \hbox{ on }\partial\omega,
\end{array}
\right.
\end{equation}
The permeability tensor $\mathcal{A}\in\R^{2\times 2}$ is defined by its entries
\begin{equation}\label{permfuncNew}
\mathcal{A}_{ij}=\int_{Y_f}w^i_j(y)\,dy,\quad i,j=1,2,
\end{equation}
where for $i=1,2$, $(w^i, \pi^i)\in H^1_{0,\#}(Y_f)^3\times L^2_{0,\#}(Y_f)$ is the unique solution of the local Stokes system
\begin{equation}\label{LocalProblemNewtonian}
\left\{\begin{array}{rl}
\medskip
\displaystyle
-\Delta_yw^i +\nabla_{y}\pi^i=e_i &\hbox{in }Y_f,
\\
\medskip
\displaystyle
{\rm div}_y w^i=0&\hbox{in }Y_f,
\\
\medskip
\displaystyle
w^i=0&\hbox{on }\partial T\cup (Y_f'\times \{0,1\}),
\\
\medskip
\displaystyle y\to  w^i, \pi^i & Y-\hbox{periodic},
\end{array}\right.
\end{equation}
and $\{e_i\}_{i=1,2,3}$ is the canonical basis of $\R^3$. 

\item[(ii)] If $\gamma=1$, $(V', \tilde P)\in L^2(\omega)^2\times (L^{2}_0(\omega)\cap H^1(\omega))$  is the unique solution of the nonlinear 2D Darcy's law of Carreau type
\begin{equation}\label{thm:system_gamma1}
\left\{\begin{array}{l}
\medskip
\displaystyle
V'(x')= \mathcal{U}\left(f'(x')-\nabla_{x'}\tilde P(x')\right) \quad \hbox{in }\omega,\\
\medskip
\displaystyle
{\rm div}_{x'} V'(x')=0\ \hbox{ in }\omega,\quad V'(x')\cdot n=0\ \hbox{ on }\partial\omega.
\end{array}
\right.
\end{equation}
The permeability function $\mathcal{U}:\R^2\to \R^2$ is defined by
\begin{equation}\label{permfunc123}
\mathcal{U}(\xi')=\int_{Y_f}w'_{\xi'}(y)\,dy,\quad\forall\,\xi'\in\R^2,
\end{equation}
where for every $\xi'\in\R^2$, $(w_{\xi'}, \pi_{\xi'})\in H^1_{0,\#}(Y_f)^3\times L^{2}_{0,\#}(Y_f)$ is the unique solution of the local Stokes system \begin{equation}\label{LocalProblemNonNewtonian}
\left\{\begin{array}{rl}
\medskip
\displaystyle
-{\color{black} \mu}\,{\rm div}_y(\eta_r(\mathbb{D}_y[w_{\xi'}])\mathbb{D}_y[w_{\xi'}]) +\nabla_{y}\pi_{\xi'}=\xi' &\hbox{in }Y_f,
\\
\medskip
\displaystyle
{\rm div}_yw_{\xi'}=0&\hbox{in }Y_f,
\\
\medskip
\displaystyle
w_{\xi'}=0&\hbox{on }\partial T\cup (Y_f'\times \{0,1\}),
\end{array}\right.
\end{equation}
and the nonlinear viscosity $\eta_r$ is given by the  Carreau law (\ref{Carreau}).
\end{itemize}
\item[--] For Newtonian fluids ($r=2$) and $\gamma\leq 1$, $(V', \tilde P)\in L^2(\omega)^3\times (L^{2}_0(\omega)\cap H^1(\omega))$ is the unique solution of the linear effective 2D Darcy's law \eqref{thm:system} and $\mathcal{A}$ defined by \eqref{permfuncNew}.
\end{itemize}
\end{theorem}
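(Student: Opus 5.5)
We follow the standard scheme for homogenisation in thin porous media: a priori estimates, extension of the pressure, compactness via the unfolding operator, and identification of the limit through monotonicity.

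\emph{Step 1: velocity estimates.} We take $\tilde u_\varepsilon$ as test function in the variational formulation of (\ref{2}). The inertial term vanishes because ${\rm div}_\varepsilon\tilde u_\varepsilon=0$ and $\tilde u_\varepsilon=0$ on the boundary. For $1<r\le2$ the Carreau viscosity satisfies $\eta_r\ge\eta_\infty>0$, so
\[
\mu\varepsilon^\gamma\eta_\infty\|\mathbb D_\varepsilon[\tilde u_\varepsilon]\|^2_{L^2(\widetilde\Omega_\varepsilon)}\le C\|\tilde u_\varepsilon\|_{L^2(\widetilde\Omega_\varepsilon)}.
\]
We combine this with Korn's inequality and with the Poincaré inequality $\|\tilde u_\varepsilon\|_{L^2}\le C\varepsilon\|D_\varepsilon\tilde u_\varepsilon\|_{L^2}$. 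The latter holds because $\tilde u_\varepsilon$ vanishes both on the cylinders, which are $\varepsilon$-spaced in $x'$, and at $y_3=0,1$, where the $y_3$-derivative carries the factor $\varepsilon^{-1}$. This yields $\|\tilde u_\varepsilon\|_{L^2}\le C\varepsilon^{2-\gamma}$, $\|D_\varepsilon\tilde u_\varepsilon\|_{L^2}\le C\varepsilon^{1-\gamma}$ and $\|\partial_{y_3}\tilde u_\varepsilon\|_{L^2}\le C\varepsilon^{2-\gamma}$. Hence $\varepsilon^{\gamma-2}\tilde u_\varepsilon$ is bounded in $H^1(0,1;L^2(\omega)^3)$, and a weak limit $\tilde u$ exists. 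Passing to the limit in $\varepsilon^{-1}\partial_{y_3}\tilde u_{\varepsilon,3}=-{\rm div}_{x'}\tilde u'_\varepsilon$ and using the boundary conditions gives $\tilde u_3\equiv0$. The divergence constraint also passes to the limit, giving ${\rm div}_{x'}V'=0$ and $V'\cdot n=0$.

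\emph{Step 2: pressure extension and estimates (main obstacle).} We define $\tilde P_\varepsilon$ by duality through the restriction operator $\tilde R^\varepsilon_q$ of Lemma \ref{restriction_operator}, i.e. $\langle\nabla_\varepsilon\tilde P_\varepsilon,v\rangle=\langle\nabla_\varepsilon\tilde p_\varepsilon,\tilde R^\varepsilon_q v\rangle$. The viscous and force terms are then bounded using the $L^q$ bounds on $\tilde R^\varepsilon_q v$ and $\varepsilon D_\varepsilon\tilde R^\varepsilon_q v$ together with Step 1. The delicate part is the inertial term (\ref{inertial_term_intro}) tested with $\tilde R^\varepsilon_q v$: it involves products like $\varepsilon^{-1}\tilde u_\varepsilon\,\partial_{y_3}\tilde u_\varepsilon\,\tilde R^\varepsilon_q v$. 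For $\gamma\le3/4$ we bound this term with $q=2$ by Hölder and the $L^4$ Sobolev embedding. For $3/4<\gamma\le1$ the $L^2$ bound fails, and we instead interpolate $\tilde u_\varepsilon$ between its $L^2$ bound $\varepsilon^{2-\gamma}$ and its $L^6$ bound, the latter obtained by Sobolev embedding in rescaled cells. We then choose the dual exponent $q$ so that the term stays bounded; this produces $C(r)=3/(2\gamma)$, which is Proposition \ref{Estimates_extended_lemma}. These bounds give $\|\tilde P_\varepsilon\|_{L^{C(r)}}\le C$ and $\|\partial_{y_3}\tilde P_\varepsilon\|\le C\varepsilon$, so the limit $\tilde P$ does not depend on $y_3$. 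Strong convergence follows as in Tartar's argument: the gradient $\nabla_{x'}\tilde P_\varepsilon$ converges strongly in $W^{-1,C(r)'}$, and the Nečas inequality transfers this to the pressure. The essential point to verify is that the inertial contribution is $o(1)$ precisely when $\gamma\le1$.

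\emph{Step 3: identification of the limit.} We apply the unfolding operator $\hat u_\varepsilon(x',y)=\tilde u_\varepsilon(\varepsilon[x'/\varepsilon]+\varepsilon y',y_3)$. Then $\varepsilon^{\gamma-2}\hat u_\varepsilon\rightharpoonup\hat u$ in $L^2(\omega;H^1_\#(Y)^3)$, with $\hat u=0$ on $T$, ${\rm div}_y\hat u=0$, $\hat u_3=0$ and $\int_Y\hat u\,dy=V$. We take oscillating test functions $v(x',x'/\varepsilon,y_3)$ that are divergence-free in $y$. In the rescaled problem the stress argument is $\mathbb D_\varepsilon[\tilde u_\varepsilon]\approx\varepsilon^{1-\gamma}\mathbb D_y[\hat u]$, so the viscosity factor equals $(1+\lambda\varepsilon^{2(1-\gamma)}|\mathbb D_y\hat u|^2)^{r/2-1}$. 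When $\gamma<1$ this tends to $\eta_0$, and we obtain the linear cell problem (\ref{LocalProblemNewtonian}) with $\xi'=f'-\nabla_{x'}\tilde P$ scaled by $1/(\mu\eta_0)$, hence (\ref{thm:system}). When $\gamma=1$ the full Carreau nonlinearity survives. Because the pseudoplastic Carreau operator is monotone, we pass to the limit by Minty's trick in the form $\int(\eta_r(\mathbb D\hat u_\varepsilon)\mathbb D\hat u_\varepsilon-\eta_r(\mathbb D\phi)\mathbb D\phi):(\mathbb D\hat u_\varepsilon-\mathbb D\phi)\ge0$. The energy limsup needed for Minty comes from the energy equality, which contains no inertial term. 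This yields (\ref{LocalProblemNonNewtonian}) with $\xi'=f'-\nabla_{x'}\tilde P$, hence (\ref{thm:system_gamma1}). For $r=2$ the viscosity is constant and the argument reduces to the linear case for all $\gamma\le1$.

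\emph{Step 4: regularity and uniqueness.} Strict monotonicity and coercivity of $\mathcal U$ (respectively positive definiteness of $\mathcal A$) allow us to invert the Darcy relation. This gives $\nabla_{x'}\tilde P\in L^2$, so $\tilde P\in H^1(\omega)$, and uniqueness follows by testing the difference of two solutions with $\tilde P_1-\tilde P_2$. Uniqueness of the limit then implies that the whole sequence converges.
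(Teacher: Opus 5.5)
Your outline follows the paper's proof step for step. The steps are: the energy estimate using $\eta_r\ge\eta_\infty$ with Korn's inequality and the $\varepsilon$-Poincar\'e inequality; the extension of the pressure by duality through $\tilde R^\varepsilon_q$, with $q=2$ for $\gamma\le 3/4$ and $q=3/(3-2\gamma)$ for $3/4<\gamma\le 1$, coming from an $L^2$--$L^6$ interpolation of $\tilde u_\varepsilon$; unfolding and compactness; a monotonicity (Minty) argument identifying the Carreau cell problem at $\gamma=1$; and uniqueness of the Darcy problem. Your direct treatment of $\gamma<1$ is a valid shortcut of the paper's Minty argument with $v_\varepsilon=\varepsilon^{1-\gamma}\varphi-\varepsilon^{-1}\hat u_\varepsilon$. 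Because $r\le 2$, the factor $(1+\lambda\varepsilon^{2(1-\gamma)}|\varepsilon^{\gamma-2}\mathbb{D}_y[\hat u_\varepsilon]|^2)^{r/2-1}$ takes values in $(0,1]$ and tends to $1$ in measure. It therefore disappears in the limit when multiplied by the $L^2$-bounded sequence $\varepsilon^{\gamma-2}\mathbb{D}_y[\hat u_\varepsilon]$ and tested against a fixed $\mathbb{D}_y[\varphi]$.

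Several statements need correcting, although none changes the strategy.

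\emph{Inertial term in the limit.} You only eliminate the inertial term tested with $\hat u_\varepsilon$, via the energy equality. The term tested with the smooth $\varphi$ remains, both for $\gamma<1$ and inside the Minty argument at $\gamma=1$. You need $\varepsilon^{-1}\bigl|\int_{\omega\times Y}\hat u_\varepsilon\otimes\hat u_\varepsilon:D_y\varphi\,dx'dy\bigr|\le C\varepsilon^{-1}\|\hat u_\varepsilon\|^2_{L^2(\omega\times Y)^3}\le C\varepsilon^{3-2\gamma}$, which is \eqref{bounded_term_inertial_phi}. Your remark that the inertial contribution is $o(1)$ ``precisely when $\gamma\le 1$'' conflates two roles. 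In the pressure step it only needs to be bounded, and at $\gamma=1$ the available bounds give only $O(1)$. In the limit step it is $O(\varepsilon^{3-2\gamma})$.

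\emph{Third component.} Compactness yields $\int_Y\hat u_3\,dy=0$ (hence $\tilde u_3=V_3=0$), not $\hat u_3=0$. You never use the stronger claim, so drop it.

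\emph{The case $\gamma\le 3/4$.} The plain embedding $H^1\hookrightarrow L^4$ combined with $\|D\tilde u_\varepsilon\|_{L^2}\le C\varepsilon^{1-\gamma}$ closes only for $\gamma\le 1/2$. Reaching $3/4$ already requires interpolating with the $L^2$ bound.

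\emph{The endpoint $\gamma=1$.} Here $q=3$. If you estimate the two $\varepsilon^{-1}\partial_{y_3}$ terms as the paper does, with $\tilde R^\varepsilon_q\tilde v$ in $L^{3q/(3-q)}$, you would need $W^{1,3}\hookrightarrow L^\infty$, which is false. Integrate by parts in $y_3$ instead: the two terms combine into $-\varepsilon^{-1}\int_{\widetilde\Omega_\varepsilon}\tilde u_{\varepsilon,3}\,\tilde u_\varepsilon\cdot\partial_{y_3}\tilde R^\varepsilon_q\tilde v\,dx'dy_3$. This is bounded exactly like the $\tilde u_\varepsilon\tilde\otimes\tilde u_\varepsilon$ term and closes at $q=3$.
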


\begin{remark}
 According to \cite[Theorem 1.1]{Allaire0}, the permeability tensor $\mathcal{A}$ is  symmetric and positive definite. 
\end{remark}
\begin{remark}
Note that in the case $r=2$ (i.e., for Newtonian fluids), the linear Darcy law obtained (\ref{thm:system}) and the permeability tensor (\ref{permfuncNew}) are exactly those obtained in~\cite[Theorem 17]{AnguianoSG_pressure}.
\end{remark}

\begin{theorem}[Dilatant fluids]\label{mainthmDilatant}
Let $r> 2$, $\gamma\le 1$ and $C(r)$ be defined as follows:
\begin{equation}\label{Cr_case_dilatant}
C(r)=\left\{\begin{array}{ccl}
\displaystyle {3r\over 6-r}&\hbox{for}&  2<r<\displaystyle {9\over 4},
\\
\\
r'&\hbox{for}&  \displaystyle r\ge {9\over 4}.
\end{array}\right.
\end{equation}
The result of the theorem depends on the value of  $\gamma$.
\begin{itemize}
\item[(i)]  If $\gamma<1$, there exist $\tilde u\in H^1(0,1;L^2(\omega)^3)$, with $\tilde u=0$ on $\omega \times \{0, 1\}$ and $\tilde u_3\equiv 0$,  and $\tilde P\in L^{C(r)}_0(\omega)$, such that the extension $(\tilde u_{\eps},\tilde P_{\eps})$ of the solution of (\ref{2}) satisfies the following convergences:
$$\eps^{\gamma-2}\tilde u_{\eps }\rightharpoonup \tilde  u\quad\hbox{weakly in } H^1(0,1;L^2(\omega)^3),\quad \tilde   P_{\eps }\to \tilde P\quad\hbox{strongly in }L^{C(r)}(\Omega).$$
Besides, define $V:\omega\rightarrow \R^3$ by $V(x')=\int_0^1\tilde u(x',y_3)\,dy_3$. Then, $V_3\equiv 0$ and the pair $( V', \tilde P)\in L^2(\omega)^2\times (L^{2}_0(\omega)\cap H^1(\omega))$ is the unique solution of the linear 2D Darcy's law \eqref{thm:system}-\eqref{LocalProblemNewtonian}.

\item[(ii)] If $\gamma=1$, then there exist $\tilde u\in W^{1,r}_0(0,1;L^r(\omega)^3)$, with $\tilde u=0$ on $\omega \times \{0,1\}$ and $\tilde u_3\equiv 0$, and $\tilde P\in L^{C(r)}_0(\omega)$, such that the extension $(\tilde u_{\varepsilon},\tilde P_{\varepsilon})$ of a solution of (\ref{2}) satisfies the following convergences
$$\varepsilon^{-1}\tilde u_{\varepsilon }\rightharpoonup \tilde  u\quad\hbox{weakly in } W^{1,r}(0,1;L^r(\omega)^3),\quad \tilde   P_{\varepsilon }\to \tilde P\quad\hbox{strongly in }L^{C(r)}(\Omega).$$
Besides, define $V:\omega\rightarrow \R^3$ by $V(x')=\int_0^1\tilde u(x',y_3)\,dy_3$. Then, $V_3\equiv 0$ and $(V', \tilde P)\in L^r(\omega)^2\times (L^{r'}_0(\omega)\cap W^{1,r'}(\omega))$ is the unique solution of the lower-dimensional effective nonlinear 2D Darcy's law of Carreau type \eqref{thm:system_gamma1}. The permeability function $\mathcal{U}:\R^2\to \R^2$ is defined by \eqref{permfunc123}, where for every $\xi'\in\R^2$, $(w_{\xi'}, \pi_{\xi'})\in W^{1,r}_{0,\#}(Y_f)^3\times L^{r'}_{0,\#}(Y_f)$ is the unique solution of the local Stokes system \eqref{LocalProblemNonNewtonian} with a nonlinear viscosity given by the Carreau law \eqref{Carreau}.

\end{itemize}

\end{theorem}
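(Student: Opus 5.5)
We follow the same scheme as for Theorem \ref{mainthmPseudo}, now in the $W^{1,r}$ framework appropriate to dilatant fluids. \textbf{Step 1: velocity estimates.} We take $\tilde u_\eps$ as test function in the variational form of (\ref{2}). The inertial term vanishes by incompressibility and the Dirichlet condition. For $r>2$ the Carreau viscosity satisfies $\eta_r(\xi)|\xi|^2\ge \eta_0|\xi|^2+c|\xi|^r$ up to constants, so
\[
\mu\eps^\gamma\bigl(\eta_0\|\mathbb D_\eps[\tilde u_\eps]\|^2_{L^2}+c\lambda^{\frac r2-1}\|\mathbb D_\eps[\tilde u_\eps]\|^r_{L^r}\bigr)\le \|f'\|_{L^\infty}\|\tilde u_\eps\|_{L^1}.
\]
We then use the thin porous medium Poincar\'e inequality $\|\tilde u_\eps\|_{L^q}\le C\eps\|D_\eps\tilde u_\eps\|_{L^q}$, which holds because the obstacles have size $\eps$ and the height is scaled, together with Korn's inequality for the $\eps$-scaled operators. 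From the $L^2$ part we get $\|D_\eps\tilde u_\eps\|_{L^2}\le C\eps^{1-\gamma}$ and $\|\tilde u_\eps\|_{L^2}\le C\eps^{2-\gamma}$. From the $L^r$ part we get $\|D_\eps\tilde u_\eps\|_{L^r}\le C\eps^{(1-\gamma)/(r-1)}$.

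For $\gamma<1$ the quadratic part dominates, and $\eps^{\gamma-2}\tilde u_\eps$ is bounded in $H^1(0,1;L^2)$. For $\gamma=1$ both parts are of the same order, and $\eps^{-1}\tilde u_\eps$ is bounded in $W^{1,r}(0,1;L^r)$. We extract weak limits. Passing to the limit in the rescaled divergence equation gives $\tilde u_3\equiv0$, ${\rm div}_{x'}V'=0$ and $V'\cdot n=0$.

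\textbf{Step 2: pressure estimates.} This is the main obstacle. We extend $\tilde p_\eps$ by duality through the restriction operator $\tilde R^\eps_q$ of Lemma \ref{restriction_operator}. The viscous and force terms are controlled by the Step 1 bounds. The inertial term (\ref{inertial_term_intro}), tested with $v=\tilde R^\eps_q\varphi$, must be bounded by $\|\tilde u_\eps\|_{L^{2s}}^2$ and by the $1/\eps$-weighted terms. Here we interpolate between the $L^r$ gradient bound and the Sobolev embedding $W^{1,r}\subset L^{3r/(3-r)}$ (or $L^\infty$ when $r>3$). This is exactly what Proposition \ref{Estimates_extended_lemma2} provides. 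It gives $\tilde P_\eps$ bounded in $L^{C(r)}$: $q=r'$ when $r\ge 9/4$, and the improved exponent $3r/(6-r)$ when $2<r<9/4$. In both cases the inertial contribution is $O(\eps^{1-\gamma+\delta})$ with some $\delta>0$, hence negligible for $\gamma\le1$. The $\eps^{-1}$-scaled $y_3$-derivative of the pressure is also small, so the limit $\tilde P$ is independent of $y_3$. Strong convergence of $\tilde P_\eps$ follows as in the Newtonian case, by a compactness argument on the gradient in $W^{-1,C(r)'}$.

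\textbf{Step 3: identification.} We apply the unfolding method with oscillating test functions $\varphi(x')\,\psi(x'/\eps,y_3)$. In case (i) the nonlinear part $\eps^\gamma(\eta_r-\eta_0)\mathbb D_\eps\tilde u_\eps$ tends to zero: $\mathbb D_\eps$ applied to the rescaled velocity is small, so $\eta_r(\mathbb D_\eps[\tilde u_\eps])\to\eta_0$ in measure, and the $L^r$ bound gives equi-integrability. We obtain the two-scale cell Stokes problem (\ref{LocalProblemNewtonian}) with viscosity $\eta_0$, and averaging yields (\ref{thm:system}). In case (ii) the unfolded gradient converges weakly in $L^r$. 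We identify the nonlinear flux by Minty's trick, using the monotonicity of $\xi\mapsto\eta_r(\xi)\xi$ and an energy equality in which the inertial term drops out by Step 2. This gives the cell problem (\ref{LocalProblemNonNewtonian}) and then (\ref{thm:system_gamma1}) via $\mathcal U$. Uniqueness follows from the strict monotonicity of $\mathcal U$, inherited from that of the cell operator. It implies convergence of the whole sequence and the stated regularity $\tilde P\in W^{1,r'}(\omega)$.
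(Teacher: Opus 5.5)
Your outline is correct and has the same architecture as the paper: energy estimate with Poincar\'e and Korn, extension of the pressure through $\tilde R^\varepsilon_q$ and Proposition~\ref{Estimates_extended_lemma2}, unfolding, and Minty's trick when $\gamma=1$. The genuine difference is in case (i).

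\textbf{Case (i): your route versus the paper's.} The paper never passes to the limit in the nonlinear equation there. It writes the Minty inequality with the rescaled test function $\varepsilon^{1-\gamma}\varphi-\varepsilon^{-1}\hat u_\varepsilon$. After dividing by $\varepsilon^{1-\gamma}$, the Carreau factor appears only as $(1+\lambda\varepsilon^{2(1-\gamma)}|\mathbb D_y[\varphi]|^2)^{\frac r2-1}$, which tends to $1$ uniformly because $\varphi$ is smooth. This needs no control of the nonlinearity along the velocity, and it treats $\gamma<1$ and $\gamma=1$ in one framework.

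Your direct passage to the limit also works, and it shows slightly more: the nonlinear part of the stress is negligible in the equation itself. However, ``convergence in measure plus equi-integrability'' should be replaced by an explicit bound, since $\eta_r$ is unbounded for $r>2$. From $0\le(\eta_r(\xi)-\eta_0)|\xi|\le C(|\xi|^2+|\xi|^r)$ you get
\[
\varepsilon^{\gamma-1}\bigl\|(\eta_r(\mathbb D_\varepsilon[\tilde u_\varepsilon])-\eta_0)\,\mathbb D_\varepsilon[\tilde u_\varepsilon]\bigr\|_{L^1}\le C\varepsilon^{\gamma-1}\bigl(\|\mathbb D_\varepsilon[\tilde u_\varepsilon]\|_{L^2}^2+\|\mathbb D_\varepsilon[\tilde u_\varepsilon]\|_{L^r}^r\bigr)\le C\bigl(\varepsilon^{1-\gamma}+\varepsilon^{\frac{1-\gamma}{r-1}}\bigr)\to 0 .
\]
This holds even with your $L^r$ bound $\varepsilon^{(1-\gamma)/(r-1)}$. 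That bound is weaker than the paper's $\varepsilon^{2(1-\gamma)/r}$, which comes from inserting $\|\tilde u_\varepsilon\|_{L^1}\le C\varepsilon^{2-\gamma}$ into the $L^r$ part of the energy inequality, but it suffices.

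\textbf{Justification for dropping the inertial term.} One justification needs correcting. Proposition~\ref{Estimates_extended_lemma2} does not make the inertial term $O(\varepsilon^{1-\gamma+\delta})$ with $\delta>0$. For $\gamma=1$, its bound on the second and third inertial terms is only $C\|\tilde v\|_{W^{1,r}_0(\Omega)^3}$ (see case $b_1$ of its proof). That is enough for the pressure bound, but it says nothing about negligibility. What removes inertia from the limit, and from your Minty argument, is something else. First, the skew-symmetry $\int_{\omega\times Y}(\hat u_\varepsilon\cdot\nabla_y)\hat u_\varepsilon\cdot\hat u_\varepsilon\,dx'dy=0$ kills the $-\varepsilon^{-1}\hat u_\varepsilon$ part of the test function. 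Second, the elementary estimate \eqref{bounded_term_inertial_phi} gives $\varepsilon^{-1}\bigl|\int_{\omega\times Y}(\hat u_\varepsilon\cdot\nabla_y)\hat u_\varepsilon\,\varphi\,dx'dy\bigr|\le C\varepsilon^{3-2\gamma}$ for smooth $\varphi$, using only $\|\hat u_\varepsilon\|_{L^2}\le C\varepsilon^{2-\gamma}$. So ``drops out by Step 2'' should cite these two facts instead.

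\textbf{Notation.} For $r\ge 9/4$ it is $C(r)=r'$, with $q=r$. Also, $\nabla_\varepsilon\tilde P_\varepsilon$ is bounded in $W^{-1,C(r)}$, not $W^{-1,C(r)'}$.
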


\begin{remark}
 According to \cite[Lemma 2]{Bourgeat2}, the permeability function $\mathcal{U}$ is  coercive and strictly monotone. 
\end{remark}

\section{Proof of the main results}\label{sec:proofs}
In this section, we detail the proof of the main results (Theorems \ref{mainthmPseudo} and \ref{mainthmDilatant}). The first step is to derive a priori estimates on the solution of \eqref{2}, and to extend it to the domain $\Omega$. Then, we implement the unfolding method and establish compactness properties which are the key to conclude the proof.

\subsection{A priori estimates on the velocity}

In this subsection, we establish sharp a priori estimates on the rescaled velocity $\tilde u_\eps$. To this aim, let us first recall Poincar\'e and Korn inequalities in $\widetilde\Omega_\varepsilon$, which can be found in \cite{Anguiano_SuarezGrau}.
\begin{lemma}[Remark 4.3-(i) in \cite{Anguiano_SuarezGrau}] \label{Lemma_Poincare} The following inequalities hold.
\begin{itemize}
\item[(i)] Let $1\leq q<+\infty$. There exists a positive constant $C$ such that, for every $\eps$ and every $\tilde  v\in W^{1,q}_0(\widetilde \Omega_\varepsilon)^3$,
\begin{equation*}
\|\tilde v\|_{L^q(\widetilde \Omega_\varepsilon)^3}\leq C\varepsilon\|D_\varepsilon \tilde  v\|_{L^q(\widetilde \Omega_\varepsilon)^{3\times 3}}\quad (\hbox{Poincar\'e's inequality}).
\end{equation*}
\item[(ii)] Let $1< q<+\infty$. There exists a positive constant $C$ such that, for every $\eps$ and every $\tilde  v\in W^{1,q}_0(\widetilde \Omega_\varepsilon)^3$, 
\begin{equation*}
\|D_\varepsilon \tilde  v\|_{L^q(\widetilde \Omega_\varepsilon)^{3\times 3}}\leq C\|\mathbb{D}_\varepsilon[\tilde  v]\|_{L^q(\widetilde \Omega_\varepsilon)^{3\times 3}} \quad (\hbox{Korn's inequality}).
\end{equation*}
\end{itemize}
\end{lemma}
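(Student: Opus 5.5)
The lemma is cited from \cite{Anguiano_SuarezGrau}, but I would reprove it by working cell by cell and reducing both inequalities to fixed reference domains through the change of variables $z'=x'/\varepsilon$, $z_3=y_3$. This map sends each rescaled cell $Y_{k',\varepsilon}$ to the unit cell $Y$, and it turns $D_\varepsilon$ into $\varepsilon^{-1}D_z$: indeed, $\partial_{x_j}=\varepsilon^{-1}\partial_{z_j}$ for $j=1,2$, and $\varepsilon^{-1}\partial_{y_3}=\varepsilon^{-1}\partial_{z_3}$. The operator $D_\varepsilon$ is therefore an isotropic gradient in the $z$ variables, up to the factor $\varepsilon^{-1}$. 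Every constant then comes from a fixed domain and is independent of $\varepsilon$.

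\textbf{Poincar\'e's inequality (i).} Extend $\tilde v$ by zero to $\Omega$. On each cell $Y_{k',\varepsilon}$, the function $\hat v(z)=\tilde v(\varepsilon k'+\varepsilon z',z_3)$ lies in $W^{1,q}(Y)$ and vanishes on $\overline{T}$ and on $Y'\times\{0,1\}$. The simpler route is the vertical direction. For fixed $x'$, $\tilde v(x',\cdot)$ vanishes at $y_3=0$ and $y_3=1$, so the one-dimensional Poincar\'e inequality gives
\[
\|\tilde v\|_{L^q}\le C\|\partial_{y_3}\tilde v\|_{L^q}=C\varepsilon\,\|\varepsilon^{-1}\partial_{y_3}\tilde v\|_{L^q}\le C\varepsilon\|D_\varepsilon\tilde v\|_{L^q}.
\]
This works for all $1\le q<\infty$, and the perforations are not even needed.

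\textbf{Korn's inequality (ii).} In the $z$ variables the claim reads $\|D_z\hat v\|_{L^q}\le C\|\mathbb D_z[\hat v]\|_{L^q}$, where $\hat v$ is defined on the whole rescaled domain $\Omega/\varepsilon$ in the horizontal directions. Both sides scale by the same factor $\varepsilon^{-1}$ together with the same Jacobian, so the inequality in $\widetilde\Omega_\varepsilon$ with $D_\varepsilon$ and $\mathbb D_\varepsilon$ is equivalent to the $z$-version. I would then decompose by cells. On each unit cell $Y$, I would apply the $L^q$ Korn inequality for functions vanishing on a boundary portion of positive measure, here $Y'\times\{0,1\}$: $\|D\hat v\|_{L^q(Y)}\le C\|\mathbb D[\hat v]\|_{L^q(Y)}$. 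This inequality holds for $1<q<\infty$ by the standard argument combining the Korn inequality without boundary conditions (Ne\v{c}as's lemma) with a compactness step that kills the rigid motions. The zero trace on the top and bottom faces forces any rigid motion to vanish. Since the boundary condition is available in every cell, the cellwise constant is uniform, and summing the $q$-th powers over $k'\in\mathcal K_\varepsilon$ gives the result. The boundary cells meeting $\partial\omega$ are handled the same way, because the top and bottom faces are always present.

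The main obstacle is (ii), specifically the uniform cellwise Korn constant for $q\neq 2$. For $q=2$ it follows by integration by parts: $\|D v\|^2=2\|\mathbb D v\|^2-\|{\rm div}\,v\|^2$. For general $q$ one needs the $L^q$ Ne\v{c}as inequality on the cell and the rigid-motion compactness argument. The difficulty is only that the test functions are not fully zero on $\partial Y$, but the zero trace on $Y'\times\{0,1\}$ suffices. If one prefers to avoid cut-up cells, one can instead apply the same argument on horizontal strips of size $O(1)$ in $z$.
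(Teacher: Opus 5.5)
Your argument is correct. The paper gives no proof of this lemma and only cites \cite{Anguiano_SuarezGrau}. There the inequalities come from the standard cellwise argument: the Poincar\'e and Korn inequalities on the reference cell are rescaled to the $\varepsilon$-cubes making up $\Omega_\varepsilon$, summed, and then rewritten in $\widetilde\Omega_\varepsilon$ through the dilation $y_3=x_3/\varepsilon$.

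Your variables $z'=x'/\varepsilon$, $z_3=y_3$ are exactly the composition of these two changes of variables. So for (ii) you follow essentially the same route. Working on the full box $Y'\times(0,1)$ with zero trace only on the top and bottom faces, rather than on $Y_f$, is harmless. It also shows that the obstacles play no role in the uniformity of the Korn constant.

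For (i) your route is genuinely different and more elementary. The one-dimensional Poincar\'e inequality in $y_3$ needs neither a cell decomposition nor a compactness argument, and it gives an explicit constant. It also shows that the factor $\varepsilon$ comes from the thinness alone. In the cellwise argument, by contrast, it comes from the common size $\varepsilon$ of period and thickness.

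Two minor points to tighten.
\begin{itemize}
\item For the cells meeting $\partial\omega$, extend $\tilde v$ by zero to all of $\R^2\times(0,1)$, not only to $\Omega$. Then every cell you use is a full translate of $Y'\times(0,1)$ and the constant is genuinely uniform. The cut cells $(Y'+k')\cap(\omega/\varepsilon)$ would have varying shapes.
\item For $q=2$, the identity $\|Dv\|^2=2\|\mathbb{D}[v]\|^2-\|{\rm div}\,v\|^2$ holds globally for $v\in H^1_0$, not cell by cell, since a single cell produces boundary terms on its lateral faces. The global version is all you need, though.
\end{itemize}
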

Using the previous inequalities, we give a priori estimates for the rescaled velocity $\tilde u_\varepsilon$ in $\widetilde \Omega_\varepsilon$.

\begin{lemma} \label{Estimates_lemma}  Let $\tilde u_\varepsilon$ be a weak solution of \eqref{2}. Then, the following estimates hold.

\begin{itemize}
\item[(i)] {\it (Pseudoplastic or Newtonian fluids)} Consider $1< r\leq 2$. There exists a positive constant $C$, independent of $\varepsilon$, such that for every value of $\gamma$,
\begin{equation}\label{estimates_u_tilde}
\|\tilde u_\varepsilon\|_{L^2(\widetilde \Omega_\varepsilon)^{3}}\leq C\varepsilon^{2-\gamma} ,\quad \|D_\varepsilon \tilde u_\varepsilon\|_{L^2(\widetilde \Omega_\varepsilon)^{3\times 3}}\leq C\varepsilon^{1-\gamma} ,\quad \|\mathbb{D}_\varepsilon[\tilde u_\varepsilon]\|_{L^2(\widetilde \Omega_\varepsilon)^{3\times 3}}\leq C\varepsilon^{1-\gamma} \,.
\end{equation}
\item[(ii)] {\it (Dilatant fluids)} Consider $r>2$. There exists a positive constant $C$, independent of $\varepsilon$, such that estimates (\ref{estimates_u_tilde}) hold. Depending on the value of $\gamma$, we also have the following extra estimates:
\begin{itemize}
\item If $\gamma<1$,
\begin{equation}\label{estimates_u_tilde2less1}
\|\tilde u_\varepsilon\|_{L^r(\widetilde \Omega_\varepsilon)^{3}}\leq C\varepsilon^{-{2\over r}(\gamma-1)+1} ,\  \|D_\varepsilon \tilde u_\varepsilon\|_{L^r(\widetilde \Omega_\varepsilon)^{3\times 3}}\leq C\varepsilon^{-{2\over r}(\gamma-1)} ,\  \|\mathbb{D}_\varepsilon[\tilde u_\varepsilon]\|_{L^r(\widetilde \Omega_\varepsilon)^{3\times 3}}\leq C\varepsilon^{-{2\over r}(\gamma-1)} \,,
\end{equation}
\item If $\gamma>1$,
\begin{equation}\label{estimates_u_tilde2greater1}
\|\tilde u_\varepsilon\|_{L^r(\widetilde \Omega_\varepsilon)^{3}}\leq C\varepsilon^{{-{\gamma-1\over r-1}}+1} ,\quad \|D_\varepsilon \tilde u_\varepsilon\|_{L^r(\widetilde \Omega_\varepsilon)^{3\times 3}}\leq C\varepsilon^{{-{\gamma-1\over r-1}}} ,\quad \|\mathbb{D}_\varepsilon[\tilde u_\varepsilon]\|_{L^r(\widetilde \Omega_\varepsilon)^{3\times 3}}\leq C\varepsilon^{{-{\gamma-1\over r-1}}} \,.
\end{equation}
\item If $\gamma=1$,
\begin{equation}\label{estimates_u_tilde2equal1}
\|\tilde u_\varepsilon\|_{L^r(\widetilde \Omega_\varepsilon)^{3}}\leq C\varepsilon ,\quad \|D_\varepsilon \tilde u_\varepsilon\|_{L^r(\widetilde \Omega_\varepsilon)^{3\times 3}}\leq C ,\quad \|\mathbb{D}_\varepsilon[\tilde u_\varepsilon]\|_{L^r(\widetilde \Omega_\varepsilon)^{3\times 3}}\leq C \,.
\end{equation}
\end{itemize}
\end{itemize}
\end{lemma}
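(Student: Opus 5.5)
We test the variational formulation of \eqref{2} with $\tilde u_\varepsilon$ itself; the estimates then follow from Lemma~\ref{Lemma_Poincare}. After multiplying by the Jacobian $\varepsilon$ (which cancels on both sides), the weak formulation reads
\begin{equation*}
\mu\varepsilon^\gamma\int_{\widetilde\Omega_\varepsilon}\eta_r(\mathbb{D}_\varepsilon[\tilde u_\varepsilon])|\mathbb{D}_\varepsilon[\tilde u_\varepsilon]|^2\,dx'dy_3+\int_{\widetilde\Omega_\varepsilon}(\tilde u_\varepsilon\cdot\nabla_\varepsilon)\tilde u_\varepsilon\cdot\tilde u_\varepsilon\,dx'dy_3=\int_{\widetilde\Omega_\varepsilon}f'\cdot\tilde u_\varepsilon'\,dx'dy_3 .
\end{equation*}
The pressure term vanishes because ${\rm div}_\varepsilon\tilde u_\varepsilon=0$. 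The inertial term also vanishes: it equals $\frac12\int \tilde u_\varepsilon\cdot\nabla_\varepsilon|\tilde u_\varepsilon|^2$, and we integrate by parts using ${\rm div}_\varepsilon\tilde u_\varepsilon=0$ and $\tilde u_\varepsilon=0$ on $\partial\Omega\cup\partial T_\varepsilon$. This holds for the weak solutions produced by the classical theory, via density. For $r\le2$ the relevant integrability follows from $H^1\subset L^4$ in 3D; for $r>2$ from $W^{1,r}$. So only the viscous and forcing terms remain.

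\textbf{Lower bound on the viscous term.} Since $\eta_0>\eta_\infty$ and the factor $(\eta_0-\eta_\infty)(1+\lambda|\cdot|^2)^{r/2-1}\ge0$, we have $\eta_r\ge\eta_\infty$ for every $r$. This gives
\begin{equation*}
\mu\eta_\infty\varepsilon^\gamma\|\mathbb{D}_\varepsilon[\tilde u_\varepsilon]\|_{L^2}^2\le\|f'\|_{L^\infty}|\Omega|^{1/2}\|\tilde u_\varepsilon\|_{L^2}\le C\varepsilon\|D_\varepsilon\tilde u_\varepsilon\|_{L^2}\le C\varepsilon\|\mathbb{D}_\varepsilon[\tilde u_\varepsilon]\|_{L^2},
\end{equation*}
where the last two inequalities are Poincaré and Korn (Lemma~\ref{Lemma_Poincare} with $q=2$). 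Hence $\|\mathbb{D}_\varepsilon[\tilde u_\varepsilon]\|_{L^2}\le C\varepsilon^{1-\gamma}$. Korn then bounds $D_\varepsilon\tilde u_\varepsilon$ by the same quantity, and Poincaré gives $\varepsilon^{2-\gamma}$ for $\tilde u_\varepsilon$. This proves \eqref{estimates_u_tilde} for all $r$ and all $\gamma$, settling (i) and the first part of (ii).

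\textbf{Case $r>2$: the $L^r$ estimates.} Here we also use $\eta_r(\xi)|\xi|^2\ge C|\xi|^r-C$. Indeed, $(1+\lambda|\xi|^2)^{r/2-1}|\xi|^2\ge\lambda^{r/2-1}|\xi|^r$, so the extra constant is not even needed. The energy identity then yields
\begin{equation*}
\varepsilon^\gamma\big(\|\mathbb{D}_\varepsilon[\tilde u_\varepsilon]\|_{L^r}^r+\|\mathbb{D}_\varepsilon[\tilde u_\varepsilon]\|_{L^2}^2\big)\le C\|\tilde u_\varepsilon\|_{L^1}.
\end{equation*}
The right-hand side can be bounded in two ways, which we use in different regimes of $\gamma$:
\begin{itemize}
\item[(a)] With Poincaré and Korn in $L^r$: $\|\tilde u_\varepsilon\|_{L^1}\le C\varepsilon\|\mathbb{D}_\varepsilon[\tilde u_\varepsilon]\|_{L^r}$. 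This gives $\|\mathbb{D}_\varepsilon[\tilde u_\varepsilon]\|_{L^r}^{r-1}\le C\varepsilon^{1-\gamma}$, i.e. the exponent $-(\gamma-1)/(r-1)$ in \eqref{estimates_u_tilde2greater1}. For $\gamma=1$ it gives the bound $C$ in \eqref{estimates_u_tilde2equal1}.
\item[(b)] With the already proved $L^2$ bound: $\|\tilde u_\varepsilon\|_{L^1}\le C\|\tilde u_\varepsilon\|_{L^2}\le C\varepsilon^{2-\gamma}$. This gives $\|\mathbb{D}_\varepsilon[\tilde u_\varepsilon]\|_{L^r}^r\le C\varepsilon^{2-2\gamma}$, i.e. the exponent $-\frac2r(\gamma-1)$ in \eqref{estimates_u_tilde2less1}.
\end{itemize}
The two exponents are compared by the sign of $\gamma-1$. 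For $\gamma<1$, bound (b) is the sharper one, since $\frac2r>\frac1{r-1}$ is equivalent to $r>2$ and so $\frac{2}{r}(1-\gamma)>\frac{1-\gamma}{r-1}$. For $\gamma>1$, bound (a) is the sharper one. In each case Korn and Poincaré in $L^r$ then transfer the bound to $D_\varepsilon\tilde u_\varepsilon$, and to $\tilde u_\varepsilon$ with an extra factor $\varepsilon$.

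The computations themselves are routine. The point requiring the most care is making sure the inertial term really vanishes when testing with $\tilde u_\varepsilon$. This needs the trilinear form to be well defined and antisymmetric for the weak solutions under consideration, which holds since $H^1_0\subset L^4$ in dimension three. It also needs the $\varepsilon$-scaled divergence to be used consistently in the integration by parts.
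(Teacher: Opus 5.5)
Your proposal is correct and follows the paper's route. The paper also tests \eqref{2} with $\tilde u_\varepsilon$ and uses the vanishing of the convective term to obtain the energy identity, then defers the remaining computations to Lemma 3.2 of \cite{Carreau_Ang_Bonn_SG2}; your bounds $\eta_r\ge\eta_\infty$ and $(1+\lambda|\xi|^2)^{r/2-1}|\xi|^2\ge\lambda^{r/2-1}|\xi|^r$, combined with Poincar\'e--Korn and the two bounds on $\|\tilde u_\varepsilon\|_{L^1}$ chosen by the sign of $\gamma-1$, correctly supply those omitted details.
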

\begin{proof}
 Multiplying (\ref{2}) by $\tilde u_\varepsilon$, integrating over $\widetilde\Omega_\varepsilon$, and taking into account that
 $$\int_{\widetilde\Omega_\varepsilon}( \tilde  u_\varepsilon\cdot \nabla_\varepsilon)\tilde u_\varepsilon \tilde u_\varepsilon dx'dy_3=0,$$
 we get
\begin{equation*}{\color{black} \mu\varepsilon^\gamma} (\eta_0-\eta_\infty)\int_{\widetilde\Omega_\varepsilon}\left(1+\lambda|\mathbb{D}_{\varepsilon}[\tilde u_\varepsilon]|^2\right)^{{r\over 2}-1}|\mathbb{D}_\varepsilon[\tilde u_\varepsilon]|^2dx'dy_3+{\color{black} \mu}\varepsilon^\gamma \eta_\infty\int_{\widetilde\Omega_\varepsilon}|\mathbb{D}_{\varepsilon}[\tilde u_\varepsilon]|^2dx'dy_3= \int_{\widetilde\Omega_\varepsilon}f'\cdot \tilde u_\varepsilon'\,dx'dy_3.
\end{equation*}
Then, we argue as in~\cite[Lemma 3.2]{Carreau_Ang_Bonn_SG2} to obtain the estimates \eqref{estimates_u_tilde}--\eqref{estimates_u_tilde2equal1}.
\end{proof}

\begin{remark}
We extend the velocity $\tilde u_\varepsilon$ by zero in $\Omega\setminus\widetilde \Omega_\varepsilon$, which is compatible with the homogeneous boundary condition on $\partial \Omega\cup \partial T_\varepsilon$, and denote the extension by the same symbol. Obviously, the estimates given in Lemma~\ref{Estimates_lemma} remain valid and the extension $\tilde u_\varepsilon$ is divergence free too.
\end{remark}

\subsection{A priori estimates for the pressure}

 In order to extend the pressure $\tilde p_\varepsilon$ to the whole domain $\Omega$ and obtain a priori estimates, we recall a result from~\cite{Anguiano_SuarezGrau} related to the existence of a restriction operator from $W^{1,q}_0(Q_\varepsilon)^3$ into $W^{1,q}_0(\Omega_\varepsilon)^3$, and its main properties, where $Q_\eps$ is the thin layer $Q_\eps=\omega\times (0,\eps)$. 

\begin{lemma}[Lemma 4.5-(i) in \cite{Anguiano_SuarezGrau}] \label{restriction_operator}
Let $1<q<+\infty$ be fixed. There exists  a restriction operator $R^\varepsilon_q$ acting from $W^{1,q}_0(Q_\varepsilon)^3$ into $W^{1,q}_0(\Omega_\varepsilon)^3$,  such that
\begin{enumerate}
\item $R^\varepsilon_q v=v$, if $v \in W^{1,q}_0(\Omega_\varepsilon)^3$ (elements of $W^{1,q}_0(\Omega_\varepsilon)$ are extended by $0$ to $Q_\varepsilon$).
\item ${\rm div}R^\varepsilon_q v=0\hbox{  in }\Omega_\varepsilon$, if ${\rm div}\,v=0\hbox{  in }Q_\varepsilon$.
\item There exists a positive constant $C$, such that for every $\eps$ and every $v\in W^{1,q}_0(Q_\varepsilon)^3$, 
\begin{equation}\label{estim_restricted}
\begin{array}{l}
\|R^\varepsilon_q v\|_{L^q(\Omega_\varepsilon)^{3}}+ \varepsilon\|D R^\varepsilon_q v\|_{L^q(\Omega_\varepsilon)^{3\times 3}} \leq C\left(\|v\|_{L^q(Q_\varepsilon)^3}+\varepsilon \|D v\|_{L^q(Q_\varepsilon)^{3\times 3}}\right)\,.
\end{array}
\end{equation}
\end{enumerate}
\end{lemma}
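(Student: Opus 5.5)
The restriction operator $R^\varepsilon_q$ will be built by the classical Tartar construction for perforated domains, applied cell by cell. Because the solid parts are cylinders whose height equals the layer thickness, the construction reduces to a two-dimensional problem in $x'$ with $x_3$ carried along as a parameter.

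\emph{Reference cell.} First we construct an operator $R_q : W^{1,q}(Y)^3 \to W^{1,q}(Y)^3$ on the reference cell $Y = Y'\times(0,1)$, or on a slightly enlarged version of it. The operator must have three properties: (a) $R_q v = v$ whenever $v=0$ on $T$; (b) $R_q v = 0$ on $T$ and $R_q v = v$ near the lateral boundary $\partial Y'\times(0,1)$, so that cells can be glued; (c) ${\rm div}\, R_q v = 0$ whenever ${\rm div}\, v = 0$. It must also satisfy
\[
\|R_q v\|_{L^q(Y)} + \|D R_q v\|_{L^q(Y)} \le C\bigl(\|v\|_{L^q(Y)}+\|Dv\|_{L^q(Y)}\bigr).
\]
To build it, let $\psi$ be a cutoff equal to $1$ near $T$ and $0$ near $\partial Y'\times(0,1)$. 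Set $R_q v = \psi\, w + (1-\psi)\, v$, where $w$ solves a Stokes-type (or Bogovskii) correction problem in the region $\{\psi\neq 1\}\setminus T$. This correction fixes the divergence, using the fact that $\int {\rm div}\,((1-\psi)v) = \int_{\partial T} v\cdot n = 0$ when ${\rm div}\, v = 0$. The $W^{1,q}$ bound follows from $L^q$ regularity of the Stokes problem, equivalently from the Bogovskii operator. Note that the top and bottom faces $y_3\in\{0,1\}$ are handled by the Dirichlet condition, since $v\in W^{1,q}_0(Q_\varepsilon)$ after rescaling vanishes there.

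\emph{Gluing.} Next we apply $R_q$ in each cell $Y_{k',\varepsilon}$, after scaling $x'$ by $\varepsilon$ and $x_3$ by $\varepsilon$. In the original variables, the cell $\varepsilon k' + \varepsilon Y'\times(0,\varepsilon)$ is an exact homothety of $Y$ with ratio $\varepsilon$, so the scaling is isotropic. Under this dilation, the $L^q$ norm of the function and $\varepsilon$ times the $L^q$ norm of its gradient transform with the same factor $\varepsilon^{3/q}$. The unit-cell estimate therefore becomes
\[
\|R v\|_{L^q} + \varepsilon\|DRv\|_{L^q} \le C\bigl(\|v\|_{L^q}+\varepsilon\|Dv\|_{L^q}\bigr)
\]
on each cell, with $C$ independent of $\varepsilon$ and $k'$. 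Summing the $q$-th powers over $k'\in\mathcal K_\varepsilon$ gives \eqref{estim_restricted}. Cells meeting $\partial\omega$ contain no obstacles by assumption, so there we simply define $R^\varepsilon_q v = v$. Property (b) guarantees that the pieces match across cell interfaces, so the glued function lies in $W^{1,q}_0(\Omega_\varepsilon)^3$. Properties 1 and 2 of the lemma are inherited cellwise from (a) and (c).

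\emph{Main obstacle.} The delicate step is (c): preserving the divergence-free condition while keeping the estimate uniform and valid for $L^q$ with $q\neq 2$. This requires $W^{1,q}$ solvability of the divergence equation ${\rm div}\, w = g$ with $\int g = 0$ and zero boundary data, in the Lipschitz annular region around $T$. I would invoke the Bogovskii operator, which is bounded $L^q\to W^{1,q}_0$ for $1<q<\infty$, rather than Stokes regularity. Its boundedness is exactly what restricts the lemma to $1<q<\infty$.
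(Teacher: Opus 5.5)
The paper gives no proof of this lemma; it only cites Anguiano--Su\'arez-Grau, where the operator comes from a Tartar-type construction on the reference cell, transported to each cell and glued. Your architecture matches this. You build a cell operator with (a)--(c), observe that $(\varepsilon k',0)+\varepsilon Y$ is an exact $\varepsilon$-homothety of $Y$ (so $\|\cdot\|_{L^q}+\varepsilon\|D\cdot\|_{L^q}$ scales by $\varepsilon^{3/q}$), and sum $q$-th powers. That part is fine.

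\textbf{Main gap: the cell operator.} The construction you write down cannot give (a) and (c) at the same time. Property (a) is exactly item 1 of the lemma, which is what makes $F_\varepsilon$ an extension of $\nabla p_\varepsilon$.

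\emph{Bogovskii reading.} Here your operator is effectively $R_qv=(1-\psi)v+w$, with ${\rm div}\,w=-{\rm div}((1-\psi)v)$ and $w$ vanishing on the boundary of an annulus around $T$. Then $w$ is determined by ${\rm div}((1-\psi)v)$, which does not see $v$ on $\{\psi=1\}$. Take any $v$ vanishing on $T$ and add ${\rm curl}\,\Phi$, with $\Phi$ compactly supported in the interior of $\{\psi=1\}\setminus\overline T$. This changes the field but leaves $R_qv$ unchanged, so $R_q$ cannot be the identity on both fields.

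\emph{Tartar reading.} If $w$ is a Stokes solution, it does equal $v$ when $v|_T=0$, by uniqueness. But the blend $\psi w+(1-\psi)v$ has divergence $\nabla\psi\cdot(w-v)\neq0$, so (c) fails. Tartar uses the Stokes solution itself, with no cutoff. For $q\neq2$ that route needs $W^{1,q}$ Stokes estimates in a cell with edges along $\partial T'\times\{0,1\}$.

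\emph{The missing idea.} For a Bogovskii proof, the correction must depend only on $v|_T$. Take:
\begin{itemize}
\item $\overline{T'}\subset A'\subset\subset Y'$ smooth, and $A=(A'\setminus\overline{T'})\times(0,1)$;
\item $\psi=\psi(y')\in C^\infty_c(A')$ equal to $1$ near $\overline{T'}$;
\item $E$ a bounded extension of $v|_T$ acting in $y'$ only, so the zero traces on $y_3\in\{0,1\}$ are kept;
\item $B$ Bogovskii's operator on $A$.
\end{itemize}
Then set
\[
R_qv=v-\psi Ev+B\Big({\rm div}(\psi Ev)-\frac{1}{|A|}\int_A{\rm div}(\psi Ev)\Big).
\]
This gives $R_qv=0$ on $T$, and $R_qv=v$ near $\partial Y'\times(0,1)$ and whenever $v|_T=0$. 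A flux computation gives ${\rm div}\,R_qv={\rm div}\,v+\frac{1}{|A|}\int_T{\rm div}\,v$ in $A$, which vanishes when ${\rm div}\,v=0$. The subtracted mean also makes $R_q$ defined on all of $W^{1,q}$, not only on divergence-free fields; the duality definition of $F_\varepsilon$ needs this. The $W^{1,q}$ bound follows from the boundedness of $E$ and $B$.

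\textbf{Smaller point: boundary cells.} The paper does not assume that cells meeting $\partial\omega$ are free of obstacles. $\mathcal K_\varepsilon$ contains every cell meeting $\omega$, and the hypothesis is only that obstacles do not intersect $\partial\omega$. So a boundary cell may contain an obstacle lying inside $\omega$, on which $R^\varepsilon_qv$ must vanish, and setting $R^\varepsilon_qv=v$ there is not admissible. You have to apply the cell operator in those cells too, keeping its modification region inside $\omega$. An $\varepsilon$-uniform constant then requires such obstacles to stay at distance of order $\varepsilon$ from $\partial\omega$, a technicality the cited literature also passes over.
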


\subsubsection{Pseudoplastic or Newtonian fluids: case $1< r\leq 2$}
In the next result, by using the restriction operator defined in Lemma \ref{restriction_operator},  we first extend the gradient of the pressure by duality in $W^{-1,C(r)}(Q_\varepsilon)^3$, where $C(r)$ is given by (\ref{Cr_case_pseudo}), and then, by means of the dilatation, we extend $\tilde p_\varepsilon$ to $\Omega$, and finally, we derive estimates of the extension of the pressure.

We have the following results for the pressure  $\tilde p_\varepsilon$.
\begin{proposition}\label{Estimates_extended_lemma}  Consider $1< r\leq 2$. Let the constant $C(r)$ be defined by (\ref{Cr_case_pseudo}). Then, there exist an extension  $\tilde P_\varepsilon \in L^{C(r)}_0(\Omega)$ of the pressure $\tilde p_\varepsilon$ and a positive constant $C$ independent of $\varepsilon$, such that 
\begin{equation}\label{esti_P}
\|\tilde P_\varepsilon\|_{L^{C(r)}(\Omega)}\leq C\,, \quad \|\nabla_\varepsilon \tilde P_\varepsilon\|_{W^{-1,C(r)}(\Omega)^3}\leq C.
\end{equation}
\end{proposition}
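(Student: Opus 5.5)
We follow the standard duality construction of Tartar, adapted to thin porous media as in \cite{Anguiano_SuarezGrau}. Write $q=C(r)'$ for the conjugate exponent ($q=2$ if $\gamma\le 3/4$, $q=3/(3-2\gamma)$ if $3/4<\gamma\le 1$; note $q\in(2,3]$ in the second case). We define $F_\varepsilon\in W^{-1,C(r)}(Q_\varepsilon)^3$ by
$$\langle F_\varepsilon, v\rangle_{Q_\varepsilon}=\langle \nabla p_\varepsilon, R^\varepsilon_q v\rangle_{\Omega_\varepsilon},\qquad v\in W^{1,q}_0(Q_\varepsilon)^3.$$
By property 2 of Lemma \ref{restriction_operator}, $F_\varepsilon$ vanishes on divergence-free fields, so $F_\varepsilon=\nabla P_\varepsilon$ with $P_\varepsilon\in L^{C(r)}_0(Q_\varepsilon)$. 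By property 1, $P_\varepsilon$ extends $p_\varepsilon$. After the dilatation \eqref{dilatacion} we set $\tilde P_\varepsilon(x',y_3)=P_\varepsilon(x',\varepsilon y_3)$, and write $\tilde R^\varepsilon_q\tilde v=(R^\varepsilon_q v)(x',\varepsilon y_3)$. This gives
$$\langle\nabla_\varepsilon\tilde P_\varepsilon,\tilde v\rangle_\Omega=\langle\nabla_\varepsilon\tilde p_\varepsilon,\tilde R^\varepsilon_q\tilde v\rangle_{\widetilde\Omega_\varepsilon}.$$
In rescaled form, \eqref{estim_restricted} reads $\|\tilde R^\varepsilon_q\tilde v\|_{L^q}+\varepsilon\|D_\varepsilon\tilde R^\varepsilon_q\tilde v\|_{L^q}\le C(\|\tilde v\|_{L^q}+\varepsilon\|D_\varepsilon\tilde v\|_{L^q})$. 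For $\tilde v\in W^{1,q}_0(\Omega)^3$ this implies $\|D_\varepsilon \tilde R^\varepsilon_q\tilde v\|_{L^q}\le C\varepsilon^{-1}\|\tilde v\|_{W^{1,q}}$ and $\|\tilde R^\varepsilon_q\tilde v\|_{L^q}\le C\|\tilde v\|_{W^{1,q}}$.

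Next we test \eqref{2} with $\tilde R^\varepsilon_q\tilde v$ and bound each term. For the viscous term, the Carreau law with $r\le 2$ gives $|\eta_r(\mathbb D)\mathbb D|\le \eta_0|\mathbb D|$. Combined with Lemma \ref{Estimates_lemma}, this yields
$$\mu\varepsilon^\gamma\eta_0\|\mathbb D_\varepsilon[\tilde u_\varepsilon]\|_{L^2}\|D_\varepsilon\tilde R^\varepsilon_q\tilde v\|_{L^2}\le C\varepsilon^\gamma\varepsilon^{1-\gamma}\varepsilon^{-1}\|\tilde v\|_{W^{1,q}}\le C\|\tilde v\|_{W^{1,q}},$$
using $q\ge2$ and the boundedness of $\Omega$. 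The force term is bounded directly by $C\|\tilde v\|_{W^{1,q}}$.

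The main obstacle is the inertial term. We write it as in \eqref{inertial_term_intro}, or equivalently bound $\int(\tilde u_\varepsilon\cdot\nabla_\varepsilon)\tilde u_\varepsilon\cdot \tilde R^\varepsilon_q\tilde v$ by
$$\|\tilde u_\varepsilon\|_{L^{a}}\,\|D_\varepsilon\tilde u_\varepsilon\|_{L^2}\,\|\tilde R^\varepsilon_q\tilde v\|_{L^{b}},\qquad \tfrac1a+\tfrac12+\tfrac1b=1.$$
\begin{itemize}
\item \textbf{Case $\gamma\le3/4$ ($q=2$).} Take $a=b=4$. Interpolate $\|\tilde u_\varepsilon\|_{L^4}$ between $L^2$ and $L^6$ (Gagliardo--Nirenberg/Sobolev applied to $\tilde u_\varepsilon$ extended by zero, tracking the anisotropic scaling in $y_3$ via $D_\varepsilon$). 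Handle $\|\tilde R^\varepsilon_2\tilde v\|_{L^4}$ the same way, using $\varepsilon\|D_\varepsilon\tilde R\tilde v\|_{L^2}\le C\|\tilde v\|_{H^1}$. The estimates $\|\tilde u_\varepsilon\|_{L^2}\le C\varepsilon^{2-\gamma}$ and $\|D_\varepsilon\tilde u_\varepsilon\|_{L^2}\le C\varepsilon^{1-\gamma}$ should then give a total power of $\varepsilon$ that is nonnegative exactly when $\gamma\le 3/4$.
\item \textbf{Case $3/4<\gamma\le1$.} Instead take $b=q$ and place the extra integrability on $\tilde u_\varepsilon$, i.e.\ $a=2q/(q-2)$. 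Interpolate again with $L^6$ and choose $q=3/(3-2\gamma)$ so that the exponent of $\varepsilon$ is exactly zero. Here $\|\tilde R\tilde v\|_{L^q}\le C\|\tilde v\|_{W^{1,q}}$ costs no power of $\varepsilon$.
\end{itemize}
I expect the bookkeeping of the anisotropic Sobolev inequality (the factor $\varepsilon^{-1}$ on $\partial_{y_3}$) to be the delicate point, and would verify the exponent balance carefully there.

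Collecting the three bounds gives $|\langle\nabla_\varepsilon\tilde P_\varepsilon,\tilde v\rangle|\le C\|\tilde v\|_{W^{1,q}_0(\Omega)}$, that is, the second estimate in \eqref{esti_P}. For the first estimate we use the Ne\v{c}as inequality on the fixed domain $\Omega$ for zero-mean functions:
$$\|\tilde P_\varepsilon\|_{L^{C(r)}}\le C\|\nabla\tilde P_\varepsilon\|_{W^{-1,C(r)}}\le C\|\nabla_\varepsilon\tilde P_\varepsilon\|_{W^{-1,C(r)}},$$
where the last step holds because $\varepsilon<1$, so $|\partial_{y_3}\tilde P_\varepsilon|=\varepsilon|\varepsilon^{-1}\partial_{y_3}\tilde P_\varepsilon|$ only improves the bound. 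This gives $\|\tilde P_\varepsilon\|_{L^{C(r)}(\Omega)}\le C$.
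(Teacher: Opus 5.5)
Your extension by duality, your bounds on the viscous and force terms, your case $\gamma\le 3/4$ with $a=b=4$ (giving $\varepsilon^{3/2-2\gamma}$), and the Ne\v{c}as step are all correct and match the paper.

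\textbf{The gap.} It lies in the case $3/4<\gamma<1$, which is precisely where the proposition improves on the plain $L^2$ bound. With $b=q$ and $q=3/(3-2\gamma)\in(2,3)$, H\"older forces $a=2q/(q-2)=6/(4\gamma-3)$. This is strictly larger than $6$ for every $\gamma<1$, and blows up as $\gamma\downarrow 3/4$. But all you control for $\tilde u_\varepsilon$ is $L^2$ together with an $H^1$-type bound, that is, at most $L^6$ in three dimensions. Solving $1/a=\theta/2+(1-\theta)/6$ gives $\theta=2\gamma-2<0$. That is an extrapolation, not an interpolation, so $\|\tilde u_\varepsilon\|_{L^a}$ is simply not controlled. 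The ``exponent exactly zero'' you anticipate is exactly what the forbidden negative $\theta$ produces: $\varepsilon^{1-\gamma+\theta}\cdot\varepsilon^{1-\gamma}=\varepsilon^{0}$. Only the endpoint $\gamma=1$ (where $a=6$ and $b=q=3$) goes through as written. Note that the difficulty is not the anisotropic bookkeeping you flagged. The crude bound $\|D\tilde u_\varepsilon\|_{L^2}\le\|D_\varepsilon\tilde u_\varepsilon\|_{L^2}$ plus the usual Sobolev inequality is all the paper uses, and it is what yields your threshold $3/4$. The real obstruction is the admissible range of exponents.

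\textbf{The missing idea.} The test function, not $\tilde u_\varepsilon$, must carry the extra integrability, via $\|\tilde R^\varepsilon_q\tilde v\|_{L^{3q/(3-q)}}\le C\|D\tilde R^\varepsilon_q\tilde v\|_{L^q}\le C\varepsilon^{-1}\|\tilde v\|_{W^{1,q}_0(\Omega)}$. The paper does this in two parts. It integrates the $x'$-part by parts onto $D_{x'}\tilde R^\varepsilon_q\tilde v$, with bound $\|\tilde u_\varepsilon\|_{L^s}^2\|D_{x'}\tilde R^\varepsilon_q\tilde v\|_{L^q}$ and $2/s+1/q\le 1$. For the $\varepsilon^{-1}\partial_{y_3}$-terms it puts $\tilde R^\varepsilon_q\tilde v$ in $L^{3q/(3-q)}$ and interpolates $\tilde u_\varepsilon$ with $\theta\ge 2\gamma-1$. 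Both constraints lead to $q\ge 3/(3-2\gamma)$.

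Within your single H\"older bound, the simplest repair is to take $a=6$ and $b=3$ for all $\gamma$. Then $\|\tilde u_\varepsilon\|_{L^6}\le C\varepsilon^{1-\gamma}$. Interpolate $\|\tilde R^\varepsilon_q\tilde v\|_{L^3}$ between $L^q$ (cost $\varepsilon^0$) and $L^{3q/(3-q)}$ (cost $\varepsilon^{-1}$), with weight $3/q-1\in[0,1/2]$. The total exponent is then $3-2\gamma-3/q$. It vanishes at $q=3/(3-2\gamma)$ and equals $3/2-2\gamma$ at $q=2$. So this one choice covers both of your cases, and at $\gamma=1$ it avoids the degenerate exponent $3q/(3-q)=\infty$.
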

 
\begin{proof}
 We divide the proof in two steps. First, we extend the pressure, and next we obtain the estimates in the case of pseudoplastic or Newtonian fluids.\\

{\it Step 1.} {\it Extension of the pressure}.  Using the restriction operator with $q\geq 2$, where $q$ to be determined in {\it Step 2}, given in  Lemma \ref{restriction_operator}, i.e., $R^\varepsilon_q$, we  introduce $F_\varepsilon$ in $W^{-1,C(r)}(Q_\varepsilon)^3$, where $C(r)$ is the conjugate of $q$,  in the following way
\begin{equation}\label{F}\langle F_\varepsilon, v\rangle_{W^{-1,C(r)}(Q_\varepsilon)^3, W^{1,q}_0(Q_\varepsilon)^3}=\langle \nabla p_\varepsilon, R^\varepsilon_q v\rangle_{{W^{-1,C(r)}(\Omega_\varepsilon)^3, W^{1,q}_0(\Omega_\varepsilon)^3}}\,,\quad \hbox{for any }v\in W^{1,q}_0(Q_\varepsilon)^3\,,
\end{equation}
and calculate the right hand side of (\ref{F}) by using the variational formulation of problem (\ref{1}), which  gives
\begin{equation}\label{equality_duality}
\begin{array}{rl}
\medskip
\displaystyle
\left\langle F_{\varepsilon},v\right\rangle_{W^{-1,C(r)}(Q_\varepsilon)^3, W^{1,q}_0(Q_\varepsilon)^3}=&\displaystyle
-{\color{black} \mu\varepsilon^\gamma}(\eta_0-\eta_\infty)\int_{\Omega_\varepsilon} (1+\lambda|\mathbb{D}[u_\varepsilon]|^2)^{{r\over 2}-1}\mathbb{D}[u_\varepsilon]: DR^{\varepsilon}_qv\,dx \\
\medskip
&\displaystyle
- {\color{black} \mu\varepsilon^\gamma}\eta_\infty  \int_{\Omega_\varepsilon}   \mathbb{D}[u_\varepsilon]: DR^{\varepsilon}_qv\,dx+ \int_{\Omega_\varepsilon} f'\cdot (R^{\varepsilon}_qv)'\,dx \,\\
\medskip
&\displaystyle 
-\int_{\Omega_\varepsilon} (u_\varepsilon\cdot \nabla)u_\varepsilon\,R^{\varepsilon}_qv\,dx.

\end{array}\end{equation}
Using Lemma \ref{Estimates_lemma} for fixed $\varepsilon$, we see that it is a bounded functional on $W^{1,q}_0(Q_\varepsilon)$ (see {\it Step} 2 below), and in fact $F_\varepsilon\in W^{-1,C(r)}(Q_\varepsilon)^3$. Moreover, ${\rm div}\, v=0$ implies $\left\langle F_{\varepsilon},v\right\rangle=0\,,$ and the DeRham theorem gives the existence of $P_\varepsilon$ in $L^{C(r)}_0(Q_\varepsilon)$ with $F_\varepsilon=\nabla P_\varepsilon$.

Next, we get for every $\tilde v\in W^{1,q}_0(\Omega)^3$ where $\tilde v(x', y_3)=v(x',\varepsilon y_3)$, using the change of variables (\ref{dilatacion}), that
$$\begin{array}{rl}\displaystyle\langle \nabla_{\varepsilon}\tilde P_\varepsilon, \tilde v\rangle_{W^{-1,C(r)}(\Omega)^3, W^{1,q}_0(\Omega)^3}&\displaystyle
=-\int_{\Omega}\tilde P_\varepsilon\,{\rm div}_{\varepsilon}\,\tilde v\,dx'dy_3
=-\varepsilon^{-1}\int_{Q_\varepsilon}P_\varepsilon\,{\rm div}\,v\,dx=\varepsilon^{-1}\langle \nabla P_\varepsilon, v\rangle_{W^{-1,C(r)}(Q_\varepsilon)^3, W^{1,q}_0(Q_\varepsilon)^3}\,.
\end{array}$$
Using the identification (\ref{equality_duality}) of $F_\varepsilon$, we have
$$\begin{array}{rl}\displaystyle\langle \nabla_{\varepsilon}\tilde P_\varepsilon, \tilde v\rangle_{W^{-1,C(r)}(\Omega)^3, W^{1,q}_0(\Omega)^3}\displaystyle
\medskip
 =&\displaystyle \varepsilon^{-1}\left(
-{\color{black} \mu\varepsilon^\gamma}(\eta_0-\eta_\infty) \int_{\Omega_\varepsilon} (1+\lambda|\mathbb{D}[u_\varepsilon]|^2)^{{r\over 2}-1}\mathbb{D}[u_\varepsilon]: DR^{\varepsilon}_qv\,dx\right. \\
\medskip
& \displaystyle
\quad\quad - {\color{black} \mu\varepsilon^\gamma}\eta_\infty  \int_{\Omega_\varepsilon}   \mathbb{D}[u_\varepsilon]: DR^{\varepsilon}_qv\,dx+ \int_{\Omega_\varepsilon} f'\cdot (R^{\varepsilon}_qv)'\,dx\,\\
\medskip
&\displaystyle\left.\quad\quad -\int_{\Omega_\varepsilon} (u_\varepsilon\cdot \nabla)u_\varepsilon\,R^{\varepsilon}_qv\,dx\right),
\end{array}$$
and applying the change of variables (\ref{dilatacion}), we obtain 
\begin{equation}\label{extension_1}
\begin{array}{rl}\medskip
\displaystyle\langle \nabla_{\varepsilon}\tilde P_\varepsilon, \tilde v\rangle_{W^{-1,C(r)}(\Omega)^3, W^{1,q}_0(\Omega)^3}
=&\displaystyle- {\color{black} \mu\varepsilon^\gamma}(\eta_0-\eta_\infty)\int_{\widetilde \Omega_\varepsilon} (1+\lambda|\mathbb{D}_\varepsilon[\tilde u_\varepsilon]|^2)^{{r\over 2}-1}\mathbb{D}_\varepsilon[\tilde u_\varepsilon] : D_{\varepsilon}\tilde R^{\varepsilon}_q\tilde v\,dx'dy_3\\
\medskip
&\displaystyle - {\color{black} \mu\varepsilon^\gamma}\eta_\infty  \int_{\widetilde\Omega_\varepsilon}   \mathbb{D}_\varepsilon[\tilde u_\varepsilon]: D_\varepsilon\tilde R^{\varepsilon}_q\tilde v\,dx'dy_3+\int_{\widetilde \Omega_\varepsilon} f'(x')\cdot (\tilde R^{\varepsilon}_q \tilde v)'\,dx'dy_3
\\
\medskip
&\displaystyle
-\int_{\widetilde \Omega_\varepsilon} (\tilde u_\varepsilon\cdot \nabla_\varepsilon )\tilde u_\varepsilon\,\tilde R^{\varepsilon}_q\tilde v\,dx'dy_3\,,
\end{array}
\end{equation}
where $\tilde R^\varepsilon_q\tilde v=R^\varepsilon_q v$ for any $\tilde v \in W^{1,q}_0(\Omega)^3$.\\

{\it Step 2}. {\it  Estimates of the extended pressure for pseudoplastic or Newtonian fluids}.  We consider an exponent $q\geq 2$ and its conjugate $C(r)\in (1,2]$. Taking into account that the a priori estimates for the velocity, given in (\ref{estimates_u_tilde}), is given in $L^2$, the value of $q$ has to be as near as possible to $2$ and it will be derived later when we estimate the inertial term.

The estimate  (\ref{estim_restricted}) implies that  $\tilde R^\varepsilon_q\tilde v$ satisfies
\begin{equation*}\begin{array}{l}
\medskip\displaystyle
\|\tilde R^\varepsilon_q\tilde v\|_{L^q(\widetilde\Omega_\varepsilon)^3}+ \varepsilon \|D_{\varepsilon}\tilde R^\varepsilon_q\tilde v\|_{L^q(\widetilde\Omega_\varepsilon)^{3\times 3}}\leq  C\left(\|\tilde v\|_{L^q(\Omega)^3} 
+ \varepsilon\|D_{\varepsilon}\tilde v\|_{L^q(\Omega)^{3\times 3}}\right),
\end{array} 
\end{equation*}
and since $\varepsilon\ll 1$, we have 
\begin{equation}\label{ext_2}
\|\tilde R^\varepsilon_q \tilde v\|_{L^q(\widetilde\Omega_\varepsilon)^3}\leq  C \|\tilde v\|_{W^{1,q}_0(\Omega)^3},\quad \|D_\varepsilon \tilde R^\varepsilon_q\tilde v\|_{L^q(\widetilde\Omega_\varepsilon)^{3\times 3}}\leq {C\over \varepsilon}\|\tilde v\|_{W^{1,q}_0(\Omega)^3}.
\end{equation}
From the Sobolev embedding  $L^2 \hookrightarrow L^{C(r)}$, using the last estimate in (\ref{estimates_u_tilde}) and the estimates of the dilated restricted operator given in (\ref{ext_2}), we get the estimates for the three first terms of the right-hand side of (\ref{extension_1})
\begin{equation}\label{estim_caso2_1}
\begin{array}{rl}
\medskip
\displaystyle
\left|{\color{black} \mu\varepsilon^\gamma}\int_{\widetilde \Omega_\varepsilon}(1+\lambda|\mathbb{D}_\varepsilon[\tilde u_\varepsilon]|^2)^{{r\over 2}-1}\mathbb{D}_\varepsilon[\tilde u_\varepsilon] : D_{\varepsilon}\tilde R^{\varepsilon}_q \tilde v\,dx'dy_3\right| \leq &\displaystyle {\color{black} \mu\varepsilon^\gamma}\int_{\widetilde\Omega_\varepsilon}(1+\lambda|\mathbb{D}_\varepsilon[\tilde u_\varepsilon]|^2)^{{r\over 2}-1}|\mathbb{D}_\varepsilon[\tilde u_\varepsilon] : D_{\varepsilon}\tilde R^{\varepsilon}_q \tilde v|\,dx'dy_3\\
\medskip
 \leq &\displaystyle
 {\color{black} \mu\varepsilon^\gamma}\int_{\widetilde \Omega_\varepsilon}|\mathbb{D}_\varepsilon[\tilde u_\varepsilon] : D_{\varepsilon}\tilde R^{\varepsilon}_q \tilde v|\,dx'dy_3
 \\
\medskip
 \leq &\displaystyle
\displaystyle{\color{black} \mu \varepsilon^\gamma}\|\mathbb{D}_\varepsilon[\tilde u_\varepsilon]\|_{L^{C(r)}(\widetilde\Omega_\varepsilon)^{3\times 3}}\|D_{\varepsilon}\tilde R^{\varepsilon}_q \tilde v\|_{L^{q}(\widetilde\Omega_\varepsilon)^{3\times 3}}\\
 \medskip \leq &\displaystyle C{\color{black} \mu\varepsilon^\gamma} \|\mathbb{D}_\varepsilon[\tilde u_\varepsilon]\|_{L^{2}(\widetilde\Omega_\varepsilon)^{3\times 3}}\|D_{\varepsilon}\tilde R^{\varepsilon}_q \tilde v\|_{L^{q}(\widetilde\Omega_\varepsilon)^{3\times 3}}\\
\medskip
 \leq &\displaystyle  C\|\tilde v\|_{W^{1,q}_0(\Omega)^3},
\end{array}
\end{equation}
\begin{equation}\label{estim_caso2_2}
\begin{array}{rl}
\medskip
\displaystyle
\left|{\color{black} \mu\varepsilon^\gamma}  \int_{\widetilde\Omega_\varepsilon}   \mathbb{D}_\varepsilon[\tilde u_\varepsilon]: D_\varepsilon\tilde R^{\varepsilon}_q\tilde v\,dx'dy_3\right|  \leq &\displaystyle C{\color{black} \mu\varepsilon^\gamma} \|\mathbb{D}_{\varepsilon}[\tilde u_\varepsilon]\|_{L^{C(r)}(\widetilde\Omega_\varepsilon)^{3\times 3}}\|D_{\varepsilon}\tilde R^\varepsilon_q\tilde v\|_{L^q(\widetilde \Omega_\varepsilon)^{3\times 3}}\\
\medskip
\leq &\displaystyle C{\color{black} \mu\varepsilon^\gamma} \|\mathbb{D}_{\varepsilon}[\tilde u_\varepsilon]\|_{L^{2}(\widetilde\Omega_\varepsilon)^{3\times 3}}\|D_{\varepsilon}\tilde R^\varepsilon_q\tilde v\|_{L^q(\widetilde \Omega_\varepsilon)^{3\times 3}}
\\
\medskip
 \leq &\displaystyle  C \|\tilde v\|_{W^{1,q}_0(\Omega)^3},
\end{array}
\end{equation}
\begin{equation}\label{estim_caso2_3}
\begin{array}{rl}
\medskip
\displaystyle
 \left|\int_{\widetilde\Omega_\varepsilon}f'\cdot (\tilde R^\varepsilon_q \tilde v)' \,dx'dy_3\right|&\leq C\|\tilde R^\varepsilon_q \tilde v \|_{L^q(\widetilde\Omega_\varepsilon)^3}\leq C\|\tilde v\|_{W^{1,q}_0(\Omega)^3}\,.
\end{array}
\end{equation}
Hence, we just need to estimate the inertial term, which can be written as 
\begin{eqnarray}\label{inertial_case_2}
\displaystyle \int_{\widetilde \Omega_\varepsilon} (\tilde u_\varepsilon\cdot \nabla_{\varepsilon}) \tilde u_\varepsilon\,\tilde R^\varepsilon_q \tilde v\, dx'dy_3=-\int_{\widetilde \Omega_\varepsilon} \tilde u_\varepsilon\tilde \otimes \tilde u_\varepsilon:D_{x'} \tilde R^\varepsilon_q \tilde v\,dx'dy_3\\
\displaystyle +{1\over \varepsilon}\left(\int_{\widetilde \Omega_\varepsilon}\partial_{y_3} \tilde u_{\varepsilon,3}\tilde  u_\varepsilon \tilde R^\varepsilon_q \tilde v \,dx'dy_3+ \int_{\widetilde \Omega_\varepsilon}\tilde u_{\varepsilon,3}\partial_{y_3} \tilde u_\varepsilon\, \tilde R^\varepsilon_q \tilde v\,dx'dy_3\right)\,,\nonumber
\end{eqnarray}
where $(u \tilde \otimes w)_{ij}=u_i w_j$, $i=1,2$, $j=1,2,3$.  In the following, we will show the estimate  with the values of $C(r)$ according to $\gamma$ given in (\ref{Cr_case_pseudo}), where $C(r)$ is the conjugate of $q$. For that, we will consider each of the three terms of the sum in the second member of (\ref{inertial_case_2}) in order to get an estimate as follows:
\begin{equation}\label{inertial_alpha}\left|\int_{\widetilde\Omega_\varepsilon}(\tilde u_\varepsilon\cdot \nabla_\varepsilon)\tilde u_\varepsilon \tilde R^\varepsilon_q \tilde v\,dx'dy_3\right|\leq C\varepsilon^{\alpha}\|\tilde v\|_{W^{1,q}_0(\Omega)^3},\quad \hbox{with }\alpha\geq0.
\end{equation}
\begin{itemize}
\item[{\bf a)}] {\bf Estimate for the first term in the second member of (\ref{inertial_case_2})}: by using Hölder inequality and  the second estimate of the restriction operator (\ref{ext_2}), we have
\begin{equation}\label{interpola_1_caso2}
\begin{array}{l}
\displaystyle \left|\int_{\widetilde \Omega_\varepsilon} \tilde u_\varepsilon\tilde \otimes \tilde u_\varepsilon:D_{x'} \tilde R^\varepsilon_q \tilde v\,dx'dy_3\right| 
\leq \|\tilde u_\varepsilon\|_{L^{q'}(\widetilde\Omega_\varepsilon)^3}^2\|D_{x'}\tilde R^\varepsilon_q \tilde v\|_{L^q(\widetilde\Omega_\varepsilon)^{3\times 2}}\leq C\varepsilon^{-1}  \|\tilde u_\varepsilon\|_{L^{q'}(\widetilde\Omega_\varepsilon)^3}^2\|\tilde v\|_{W^{1,q}_0(\Omega)^3},
\end{array}
\end{equation}
where 
\begin{equation}\label{ineq1}
{2\over q'}+{1\over q}\leq 1.
\end{equation}
 It remains to estimate $ \|\tilde u_\varepsilon\|_{L^{q'}(\widetilde\Omega_\varepsilon)^3}$ using (\ref{estimates_u_tilde}). So, our goal is to have $q$ the closer possible to $2$. Therefore, by interpolating between $L^2(\widetilde \Omega_\varepsilon)$ and $W_0^{1,2}(\widetilde \Omega_\varepsilon)$ with the parameter $\theta\in [0,1]$ such that 
\begin{equation}\label{inter2_6}{1\over q'}={\theta\over 2}+{1\over 6}(1-\theta),
\end{equation}
we have
\begin{equation*} \|\tilde u_\varepsilon\|_{L^{q'}(\widetilde\Omega_\varepsilon)^3}\leq \|\tilde u_\varepsilon\|^{\theta}_{L^2(\widetilde\Omega_\varepsilon)^3}\|\tilde u_\varepsilon\|^{1-\theta}_{L^{6}(\widetilde\Omega_\varepsilon)^3},
\end{equation*}
and from the Sobolev embedding $W_0^{1,2}(\widetilde \Omega_\varepsilon) \hookrightarrow L^{6}(\widetilde\Omega_\varepsilon)$ and  estimates (\ref{estimates_u_tilde}), we obtain
\begin{equation}\label{interpola_2_caso2} \|\tilde u_\varepsilon\|_{L^{q'}(\widetilde\Omega_\varepsilon)^3}\leq \|\tilde u_\varepsilon\|^{\theta}_{L^2(\widetilde\Omega_\varepsilon)^3}\| D \tilde u_\varepsilon\|^{1-\theta}_{L^2(\widetilde\Omega_\varepsilon)^{3\times 3}}\leq \|\tilde u_\varepsilon\|^{\theta}_{L^2(\widetilde\Omega_\varepsilon)^3}\| D_\varepsilon \tilde u_\varepsilon\|^{1-\theta}_{L^2(\widetilde\Omega_\varepsilon)^{3\times 3}}
\leq C\varepsilon^{\theta(2-\gamma)+(1-\theta)(1-\gamma)}.
\end{equation}
Taking into account (\ref{interpola_2_caso2}) in (\ref{interpola_1_caso2}), we can deduce 
\begin{equation*}
\begin{array}{l}
\displaystyle \left|\int_{\widetilde \Omega_\varepsilon} \tilde u_\varepsilon\tilde \otimes \tilde u_\varepsilon:D_{x'} \tilde R^\varepsilon_q \tilde v\,dx'dy_3\right| 
\leq  C\varepsilon^{2\left(\theta-\gamma+{1\over 2}\right)} \|\tilde v\|_{W_0^{1,q}( \Omega )^{3}}.
\end{array}
\end{equation*}
Thus, we must choose $\theta$ such that $\theta-\gamma+1/2\geq 0$, that is
\begin{equation}\label{intertial_case1_5}
\theta\geq\theta_0= \max\{0,\gamma-1/2\}.
\end{equation}
The inequality (\ref{ineq1}) and the equation (\ref{inter2_6}) imply that 
\begin{equation}\label{q_bounded}
q\ge {3\over 2(1-\theta)}.
\end{equation}
Taking into account (\ref{intertial_case1_5}) in (\ref{q_bounded}), we obtain 
\begin{equation}\label{intertial_case1_6}
q\ge {3\over 3-2\gamma}.
\end{equation}
If we consider $q=2$ in (\ref{intertial_case1_6}), we obtain that $\gamma\leq 3/4$. For $\gamma>3/4$ we consider (\ref{intertial_case1_6}) taking into account that $\gamma-1/2\leq \theta \leq 1$, namely we consider (\ref{intertial_case1_6}) for $3/4<\gamma\leq 3/2$. And then we obtain
\begin{equation}\label{case_1_q_term1} \left\{\begin{array}{lcl}
q=2& \hbox{for}& \gamma \leq \displaystyle{3\over 4},\\
\\
\displaystyle q\geq{3\over 3-2\gamma}& \hbox{for}&\displaystyle {3\over 4}<\gamma \leq {3\over 2}.
\end{array}\right.
\end{equation}

\item[{\bf b)}] {\bf Estimate for the second and third term in the second member of (\ref{inertial_case_2})}: by using Hölder inequality, the second and third term are estimated by 
\begin{eqnarray}\label{estimate_(b)}
&&\left|\varepsilon^{-1}\left(\int_{\widetilde \Omega_\varepsilon}\partial_{y_3} \tilde u_{\varepsilon,3}\tilde  u_\varepsilon \tilde R^\varepsilon_q \tilde v \,dx'dy_3+ \int_{\widetilde \Omega_\varepsilon}\tilde u_{\varepsilon,3}\partial_{y_3} \tilde u_\varepsilon\, \tilde R^\varepsilon_q \tilde v\,dx'dy_3\right)\right|\\
&&\leq C \varepsilon^{-1}\|\partial_{y_3}\tilde u_\varepsilon\|_{L^2(\widetilde\Omega_\varepsilon)^3}\|\tilde u_\varepsilon\|_{L^{q'}(\widetilde\Omega_\varepsilon)^3}\| R^\varepsilon_q \tilde v\|_{L^{q_1}(\widetilde\Omega_\varepsilon)^3},\nonumber
\end{eqnarray}
with
\begin{equation}\label{ineq2}
{1\over 2}+{1\over q'}+{1\over q_1}\leq 1.
\end{equation}
Interpolating between $L^2(\widetilde \Omega_\varepsilon)$ and $W_0^{1,2}(\widetilde \Omega_\varepsilon)$ with the parameter $\theta\in [0,1]$, we have again (\ref{inter2_6}) and (\ref{interpola_2_caso2}), and taking into account the second estimate in (\ref{estimates_u_tilde}), then (\ref{estimate_(b)}) can be written by
\begin{eqnarray}\label{estimate_(b)2}
\left|\varepsilon^{-1}\left(\int_{\widetilde \Omega_\varepsilon}\partial_{y_3} \tilde u_{\varepsilon,3}\tilde  u_\varepsilon \tilde R^\varepsilon_q \tilde v \,dx'dy_3+ \int_{\widetilde \Omega_\varepsilon}\tilde u_{\varepsilon,3}\partial_{y_3} \tilde u_\varepsilon\, \tilde R^\varepsilon_q \tilde v\,dx'dy_3\right)\right|\leq C \varepsilon^{\theta-2\gamma+2}\| R^\varepsilon_q \tilde v\|_{L^{q_1}(\widetilde\Omega_\varepsilon)^3}.
\end{eqnarray}
By Sobolev-Gagliardo-Nirenberg theorem, if $1\leq q<3$, then we have the continuity of embedding $W^{1,q} \hookrightarrow L^{q^*}$ where $1/q^*=1/q-1/3$. As our goal is to have $\tilde v\in W^{1,q}(\Omega)^3$ with $q\ge 2$ as close as possible to $2$, we choose $q_1={3q\over 3-q}$ which satisfies (\ref{ineq2}). Then, the Sobolev-Gagliardo-Nirenberg theorem and the second estimate of the restriction operator (\ref{ext_2}) imply that
$$\| R^\varepsilon_q \tilde v\|_{L^{q_1}(\widetilde\Omega_\varepsilon)^3}\leq C \|D R^\varepsilon_q \tilde v\|_{L^{q}(\widetilde\Omega_\varepsilon)^{3\times 3}}\leq C \|D_\varepsilon R^\varepsilon_q \tilde v\|_{L^{q}(\widetilde\Omega_\varepsilon)^{3\times 3}}\leq C\varepsilon^{-1}\|\tilde v\|_{W^{1,q}_0(\Omega)^3},$$
and (\ref{estimate_(b)2}) can be written by
\begin{eqnarray*}
\left|\varepsilon^{-1}\left(\int_{\widetilde \Omega_\varepsilon}\partial_{y_3} \tilde u_{\varepsilon,3}\tilde  u_\varepsilon \tilde R^\varepsilon_q \tilde v \,dx'dy_3+ \int_{\widetilde \Omega_\varepsilon}\tilde u_{\varepsilon,3}\partial_{y_3} \tilde u_\varepsilon\, \tilde R^\varepsilon_q \tilde v\,dx'dy_3\right)\right|\leq C \varepsilon^{\theta-2\gamma+1}\|\tilde v\|_{W^{1,q}_0(\Omega)^3}.
\end{eqnarray*}
Thus, we must choose $\theta$ such that $\theta-2\gamma+1\geq 0$, that is
\begin{equation*}
\theta\geq \max\{0,2\gamma-1\}.
\end{equation*}
The inequality (\ref{ineq2}) with $q_1={3q\over 3-q}$, and the equation (\ref{inter2_6}) imply that 
\begin{equation*}
\theta\leq \min\{1,2-{3\over q}\}.
\end{equation*}
 Then,  since $2\gamma-1\leq \theta\leq 2-{3\over q}$, then  we get (\ref{intertial_case1_6}). If we consider $q=2$ in (\ref{intertial_case1_6}), we obtain that $\gamma\leq 3/4$. For $\gamma>3/4$ we consider (\ref{intertial_case1_6}) taking into account that $2\gamma-1\leq \theta \leq 1$, namely we consider (\ref{intertial_case1_6}) for $3/4<\gamma\leq 1$. And then we obtain
\begin{equation}\label{case_1_q_term2} \left\{\begin{array}{lcl}
q=2& \hbox{for}& \gamma \leq \displaystyle{3\over 4},\\
\\
\displaystyle q\geq{3\over 3-2\gamma}& \hbox{for}&\displaystyle {3\over 4}<\gamma \leq 1.
\end{array}\right.
\end{equation}
\end{itemize}
As condition (\ref{case_1_q_term2}) is weaker than (\ref{case_1_q_term1}), it is the definition (\ref{Cr_case_pseudo}) which is the good one for $C(r)$, where $C(r)$ is the conjugate of $q$. Therefore, for the choice of $q$ given by (\ref{case_1_q_term2}), the estimate of the intertial term satisfies (\ref{inertial_alpha}), as we wanted.\\

Finally, considering $C(r)$ the conjugate of $q$, defined by (\ref{Cr_case_pseudo}), and taking into account (\ref{estim_caso2_1})-(\ref{estim_caso2_3}) and (\ref{inertial_alpha}) in (\ref{extension_1}), we get the second estimate in (\ref{esti_P}) and finally, using the Ne${\check{\rm c}}$as inequality, there exists a representative $\tilde P_\varepsilon\in L^{C(r)}_0(\Omega)$  such that
$$\|\tilde P_\varepsilon\|_{L^{C(r)}(\Omega)}\leq C\|\nabla\tilde P_\varepsilon\|_{W^{-1,C(r)}(\Omega)^3}\leq C\|\nabla_{\varepsilon}\tilde P_\varepsilon\|_{W^{-1,C(r)}(\Omega)^3},$$
which implies the first estimate in (\ref{esti_P}).

\end{proof}

\subsubsection{Dilatant fluids: case $r> 2$}
In the next result, by using the restriction operator defined in Lemma \ref{restriction_operator},  we first extend the gradient of the pressure by duality in $W^{-1,C(r)}(Q_\varepsilon)^3$, where $C(r)$ is given by (\ref{Cr_case_dilatant}), and then, by means of the dilatation, we extend $\tilde p_\varepsilon$ to $\Omega$, and finally, we derive estimates of the extension of the pressure.

We have the following results for the pressure  $\tilde p_\varepsilon$.
\begin{proposition}\label{Estimates_extended_lemma2}  Consider $r>2$ and $\gamma\leq 1$. Let the constant $C(r)$ be defined by (\ref{Cr_case_dilatant}). Then, there exist an extension  $\tilde P_\varepsilon \in L^{C(r)}_0(\Omega)$ of the pressure $\tilde p_\varepsilon$ and a positive constant $C$ independent of $\varepsilon$, such that 
\begin{equation}\label{esti_P2}
\|\tilde P_\varepsilon\|_{L^{C(r)}(\Omega)}\leq C\,, \quad \|\nabla_\varepsilon \tilde P_\varepsilon\|_{W^{-1,C(r)}(\Omega)^3}\leq C.
\end{equation}
\end{proposition}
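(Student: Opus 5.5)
The proof follows the same two-step scheme as Proposition \ref{Estimates_extended_lemma}. Let $q$ denote the conjugate of $C(r)$: one gets $q=\frac{3r}{4r-6}$ for $2<r<\frac94$ and $q=r$ for $r\ge \frac94$. Note that $q\ge r>2$ in both cases, since $\frac{3r}{4r-6}\ge r$ is equivalent to $r\le\frac94$.

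\emph{Step 1 (extension).} This is identical to Step 1 of the proof of Proposition \ref{Estimates_extended_lemma}. I define $F_\eps$ through the restriction operator $R^\eps_q$ of Lemma \ref{restriction_operator} as in (\ref{F}), and identify it via the variational formulation as in (\ref{equality_duality}). De Rham's theorem then gives $P_\eps$, and the dilatation (\ref{dilatacion}) yields $\tilde P_\eps$ together with identity (\ref{extension_1}). It then suffices to bound each term on the right-hand side of (\ref{extension_1}) by $C\|\tilde v\|_{W^{1,q}_0(\Omega)^3}$. The restriction bounds (\ref{ext_2}) hold for every $q$, and the final step via the Ne\v{c}as inequality is as before.

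\emph{Step 2 (viscous and force terms).} I use the pointwise bound $(1+\lambda|\xi|^2)^{\frac r2-1}|\xi|\le C(|\xi|+|\xi|^{r-1})$, valid for $r>2$. The $|\xi|$-part and the $\eta_\infty$ term are handled exactly as in (\ref{estim_caso2_1})--(\ref{estim_caso2_2}), using the $L^2$ estimate (\ref{estimates_u_tilde}) and $q\ge2$. For the $|\xi|^{r-1}$-part, H\"older with exponents $r'$ and $r\le q$ together with (\ref{ext_2}) gives a bound
$$C\eps^{\gamma-1}\|\mathbb D_\eps[\tilde u_\eps]\|^{r-1}_{L^r}\|\tilde v\|_{W^{1,q}_0}.$$
If $\gamma=1$, this is bounded by (\ref{estimates_u_tilde2equal1}). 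If $\gamma<1$, (\ref{estimates_u_tilde2less1}) gives the power $\eps^{(1-\gamma)(r-2)/r}\le C$. The force term is handled as in (\ref{estim_caso2_3}).

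\emph{Step 3 (inertial term, the main obstacle).} I split the inertial term as in (\ref{inertial_case_2}), but now I use the $L^r$ estimates instead of the $L^2$ ones. I describe the critical case $\gamma=1$; for $\gamma<1$ all powers of $\eps$ are larger, so the same exponents work. The estimates (\ref{estimates_u_tilde2equal1}) give $\|\tilde u_\eps\|_{L^r}\le C\eps$ and $\|D_\eps\tilde u_\eps\|_{L^r}\le C$. Since $\partial_{y_3}\tilde u_\eps$ is $\eps$ times a column of $D_\eps\tilde u_\eps$, also $\|\partial_{y_3}\tilde u_\eps\|_{L^r}\le C\eps$. For $r<3$, Sobolev's embedding gives $\|\tilde u_\eps\|_{L^{3r/(3-r)}}\le C\|D\tilde u_\eps\|_{L^r}\le C$. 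Interpolating with parameter $\theta$ then yields $\|\tilde u_\eps\|_{L^s}\le C\eps^\theta$ with $\frac1s=\frac\theta r+(1-\theta)\frac{3-r}{3r}$.

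\begin{itemize}
\item[a)] For the first term, (\ref{ext_2}) gives the bound $C\eps^{-1}\|\tilde u_\eps\|_{L^s}^2\|\tilde v\|_{W^{1,q}_0}$. This needs $\theta\ge\frac12$, i.e. $\frac1s=\frac{6-r}{6r}$, and the H\"older constraint $\frac2s+\frac1q\le1$. This reduces to $\frac1q\le\frac{4r-6}{3r}$.
\item[b)] For the $\eps^{-1}$ terms, I apply H\"older with $\partial_{y_3}\tilde u_\eps\in L^r$, $\tilde u_\eps\in L^r$ (i.e. $\theta=1$), and $\tilde R^\eps_q\tilde v\in L^{q^*}$ with $q^*=\frac{3q}{3-q}$. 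Sobolev's embedding and (\ref{ext_2}) give $\|\tilde R^\eps_q\tilde v\|_{L^{q^*}}\le C\eps^{-1}\|\tilde v\|_{W^{1,q}_0}$. The powers combine to $\eps^{-1}\cdot\eps\cdot\eps\cdot\eps^{-1}=O(1)$. The H\"older constraint $\frac2r+\frac1q-\frac13\le1$ is again exactly $\frac1q\le\frac{4r-6}{3r}$.
\end{itemize}

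Thus the optimal choice is $q=\frac{3r}{4r-6}$ whenever this is at least $r$, i.e. for $r<\frac94$. For $r\ge\frac94$ the constraint is satisfied by $q=r$, and $q=r$ is forced by the viscous term.

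The delicate points are the edge cases where $q\ge3$ or $r\ge3$. There $q^*$ is replaced by any large finite exponent (or $L^\infty$ when $q>3$), and for $r\ge3$ the Sobolev exponent of $\tilde u_\eps$ is replaced by any large finite one. These adjustments only relax the constraints above, so the conclusion is unchanged, but I would check them explicitly. Combining Steps 2 and 3 in (\ref{extension_1}) gives (\ref{esti_P2}).
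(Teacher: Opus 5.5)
Your proof is correct. For the extension and the inertial term it coincides with the paper's argument: the same splitting \eqref{inertial_case_2}, the same interpolation between $L^r$ and $L^{3r/(3-r)}$ with $\theta=\frac12$ for the first term and $\theta=1$ for the $\varepsilon^{-1}$ terms, and the same two H\"older constraints. Both constraints reduce to $\frac1q\le\frac{4r-6}{3r}$ and give exactly \eqref{Cr_case_dilatant}. For $r\ge3$ the paper takes $q=r$ and $\theta=1$ in the first term, and a large or infinite Sobolev exponent for $\tilde R^\varepsilon_q\tilde v$ in the other two, as you anticipated.

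Where you genuinely depart from the paper is the Carreau viscous term, and there your route is the one that actually works. In \eqref{estim_caso2_1_dilatant} the paper repeats the pseudoplastic argument: it bounds $(1+\lambda|\mathbb{D}_\varepsilon[\tilde u_\varepsilon]|^2)^{\frac r2-1}$ by $1$ and uses only the $L^2$ estimate. For $r>2$ this factor is $\ge 1$, so that inequality is invalid. Your splitting $(1+\lambda|\xi|^2)^{\frac r2-1}|\xi|\le C(|\xi|+|\xi|^{r-1})$ fixes this. You handle the $|\xi|^{r-1}$ part by H\"older in $(r',r)$ together with the $L^r$ bounds \eqref{estimates_u_tilde2less1}--\eqref{estimates_u_tilde2equal1}, which yields $C\varepsilon^{(1-\gamma)(r-2)/r}\|\tilde v\|_{W^{1,q}_0(\Omega)^3}$, bounded for $\gamma\le1$. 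This term is also what makes the restriction $q\ge r$ genuinely necessary rather than just a convenient choice.
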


\begin{proof}
 We divide the proof in two steps. First, we extend the pressure, and next we obtain the estimates in the case of dilatant fluids.\\

{\it Step 1.} {\it Extension of the pressure}. Using the restriction operator with $q\geq r$, where $q$ to be determined in {\it Step 2}, given in  Lemma \ref{restriction_operator}, i.e., $R^\varepsilon_q$, we  introduce $F_\varepsilon$ in $W^{-1,C(r)}(Q_\varepsilon)^3$, where $C(r)$ is the conjugate of $q$. Arguing as step 1 in Proposition \ref{Estimates_extended_lemma}, we have (\ref{extension_1}).

{\it Step 2}. {\it  Estimates of the extended pressure for dilatant fluids}. We consider an exponent $q\geq r$ and its conjugate $C(r)\in (1,{r\over r-1}]$. Taking into account that the a priori estimates for the velocity, given in (\ref{estimates_u_tilde2less1})-(\ref{estimates_u_tilde2equal1}), is given in $L^r$, the value of $q$ has to be as near as possible to $r$ and it will be derived later when we estimate the intertial term.

Taking into account that $r>2$, we have that $C(r)\leq {r\over r-1}<2$. Then, from the Sobolev embedding  $L^2\hookrightarrow L^{r\over r-1} \hookrightarrow L^{C(r)}$, using the last estimate in (\ref{estimates_u_tilde}) and the estimates of the dilated restricted operator given in (\ref{ext_2}), we get the estimates for the three first terms of the right-hand side of (\ref{extension_1})
\begin{equation}\label{estim_caso2_1_dilatant}
\begin{array}{rl}
\medskip
\displaystyle
\left|{\color{black} \mu\varepsilon^\gamma}\int_{\widetilde \Omega_\varepsilon}(1+\lambda|\mathbb{D}_\varepsilon[\tilde u_\varepsilon]|^2)^{{r\over 2}-1}\mathbb{D}_\varepsilon[\tilde u_\varepsilon] : D_{\varepsilon}\tilde R^{\varepsilon}_q \tilde v\,dx'dy_3\right| \leq &\displaystyle {\color{black} \mu\varepsilon^\gamma}\int_{\widetilde\Omega_\varepsilon}(1+\lambda|\mathbb{D}_\varepsilon[\tilde u_\varepsilon]|^2)^{{r\over 2}-1}|\mathbb{D}_\varepsilon[\tilde u_\varepsilon] : D_{\varepsilon}\tilde R^{\varepsilon}_q \tilde v|\,dx'dy_3\\
\medskip
 \leq &\displaystyle
{\color{black} \mu \varepsilon^\gamma}\int_{\widetilde \Omega_\varepsilon}|\mathbb{D}_\varepsilon[\tilde u_\varepsilon] : D_{\varepsilon}\tilde R^{\varepsilon}_q \tilde v|\,dx'dy_3
 \\
\medskip
 \leq &\displaystyle
\displaystyle {\color{black} \mu\varepsilon^\gamma}\|\mathbb{D}_\varepsilon[\tilde u_\varepsilon]\|_{L^{C(r)}(\widetilde\Omega_\varepsilon)^{3\times 3}}\|D_{\varepsilon}\tilde R^{\varepsilon}_q \tilde v\|_{L^{q}(\widetilde\Omega_\varepsilon)^{3\times 3}}\\
 \medskip \leq &\displaystyle C{\color{black} \mu\varepsilon^\gamma} \|\mathbb{D}_\varepsilon[\tilde u_\varepsilon]\|_{L^{2}(\widetilde\Omega_\varepsilon)^{3\times 3}}\|D_{\varepsilon}\tilde R^{\varepsilon}_q \tilde v\|_{L^{q}(\widetilde\Omega_\varepsilon)^{3\times 3}}\\
\medskip
 \leq &\displaystyle  C\|\tilde v\|_{W^{1,q}_0(\Omega)^3},
\end{array}
\end{equation}
\begin{equation}\label{estim_caso2_2_dilatant}
\begin{array}{rl}
\medskip
\displaystyle
\left|{\color{black} \mu\varepsilon^\gamma}  \int_{\widetilde\Omega_\varepsilon}   \mathbb{D}_\varepsilon[\tilde u_\varepsilon]: D_\varepsilon\tilde R^{\varepsilon}_q\tilde v\,dx'dy_3\right|  \leq &\displaystyle C{\color{black} \mu\varepsilon^\gamma} \|\mathbb{D}_{\varepsilon}[\tilde u_\varepsilon]\|_{L^{C(r)}(\widetilde\Omega_\varepsilon)^{3\times 3}}\|D_{\varepsilon}\tilde R^\varepsilon_q\tilde v\|_{L^q(\widetilde \Omega_\varepsilon)^{3\times 3}}\\
\medskip
\leq &\displaystyle C{\color{black} \mu\varepsilon^\gamma} \|\mathbb{D}_{\varepsilon}[\tilde u_\varepsilon]\|_{L^{2}(\widetilde\Omega_\varepsilon)^{3\times 3}}\|D_{\varepsilon}\tilde R^\varepsilon_q\tilde v\|_{L^q(\widetilde \Omega_\varepsilon)^{3\times 3}}
\\
\medskip
 \leq &\displaystyle  C \|\tilde v\|_{W^{1,q}_0(\Omega)^3},
\end{array}
\end{equation}
\begin{equation}\label{estim_caso2_3_dilatant}
\begin{array}{rl}
\medskip
\displaystyle
 \left|\int_{\widetilde\Omega_\varepsilon}f'\cdot (\tilde R^\varepsilon_q \tilde v)' \,dx'dy_3\right|&\leq C\|\tilde R^\varepsilon_q \tilde v \|_{L^q(\widetilde\Omega_\varepsilon)^3}\leq C\|\tilde v\|_{W^{1,q}_0(\Omega)^3}\,.
\end{array}
\end{equation}
Finally, we estimate the inertial term, which can be written as (\ref{inertial_case_2}) with the values of $C(r)$ according to $\gamma$ given in (\ref{Cr_case_dilatant}), where $C(r)$ is the conjugate of $q$. For that, we will consider each of the three terms of the sum in the second member of (\ref{inertial_case_2}) in order to get an estimate as follows:
\begin{equation}\label{inertial_alpha_dilatant}\left|\int_{\widetilde\Omega_\varepsilon}(\tilde u_\varepsilon\cdot \nabla_\varepsilon)\tilde u_\varepsilon \tilde R^\varepsilon_q \tilde v\,dx'dy_3\right|\leq C\varepsilon^{\alpha}\|\tilde v\|_{W^{1,q}_0(\Omega)^3},\quad \hbox{with }\alpha\geq0.
\end{equation}

\begin{itemize}
\item[{\bf a)}] {\bf Estimate for the first term in the second member of (\ref{inertial_case_2})}: considering (\ref{interpola_1_caso2}) and (\ref{ineq1}), it remains to estimate $ \|\tilde u_\varepsilon\|_{L^{q'}(\widetilde\Omega_\varepsilon)^3}$ using (\ref{estimates_u_tilde2less1})-(\ref{estimates_u_tilde2equal1}). So, our goal is to have $q$ the closer possible to $r$. Therefore, by interpolating between $L^r(\widetilde \Omega_\varepsilon)$ and $W_0^{1,r}(\widetilde \Omega_\varepsilon)$ with the parameter $\theta\in [0,1]$ such that 
\begin{equation}\label{inter2_q1}{1\over q'}={\theta\over r}+{1\over q_2}(1-\theta),
\end{equation}
we have
\begin{equation} \label{interpolation_r}
\|\tilde u_\varepsilon\|_{L^{q'}(\widetilde\Omega_\varepsilon)^3}\leq \|\tilde u_\varepsilon\|^{\theta}_{L^r(\widetilde\Omega_\varepsilon)^3}\|\tilde u_\varepsilon\|^{1-\theta}_{L^{q_2}(\widetilde\Omega_\varepsilon)^3}.
\end{equation}
By Sobolev-Gagliardo-Nirenberg theorem, if $1\leq r<3$, then we have the continuity of embedding $W^{1,r} \hookrightarrow L^{q^*}$ where $1/q^*=1/r-1/3$. Moreover, if $r=3$,  then we have the continuity of embedding $W^{1,r} \hookrightarrow L^{q^*}$ where $q^*\in[3,+\infty)$ and by Morrey theorem if $r>3$, then we have the continuity of embedding $W^{1,r} \hookrightarrow L^{\infty}$. Then, we choose
\begin{equation}\label{def_q2_dilatant}
q_2= \left\{\begin{array}{rcl}
\displaystyle {3r\over 3-r} & \hbox{for}& 2<r<3,\\
\\
\hbox{any positive value}& \hbox{for}& r=3,\\
\\
\infty& \hbox{for}& r>3.
 \end{array}\right.
\end{equation}
Then, by (\ref{interpolation_r}), the Sobolev-Gagliardo-Nirenberg theorem and Morray theorem, we obtain
\begin{equation*} \|\tilde u_\varepsilon\|_{L^{q'}(\widetilde\Omega_\varepsilon)^3}\leq \|\tilde u_\varepsilon\|^{\theta}_{L^r(\widetilde\Omega_\varepsilon)^3}\| D \tilde u_\varepsilon\|^{1-\theta}_{L^r(\widetilde\Omega_\varepsilon)^{3\times 3}}\leq \|\tilde u_\varepsilon\|^{\theta}_{L^r(\widetilde\Omega_\varepsilon)^3}\| D_\varepsilon \tilde u_\varepsilon\|^{1-\theta}_{L^r(\widetilde\Omega_\varepsilon)^{3\times 3}},
\end{equation*}
and by estimates (\ref{estimates_u_tilde2less1})-(\ref{estimates_u_tilde2equal1}), we have
\begin{equation}\label{interpola_2_caso2_dilatant}
\|\tilde u_\varepsilon\|_{L^{q'}(\widetilde\Omega_\varepsilon)^3}\leq \left\{\begin{array}{rcl}
C\varepsilon^{\theta-{2\over r}(\gamma-1)} & \hbox{if}& \gamma<1,\\
\\
C\varepsilon^{\theta-{\gamma-1\over r-1}} & \hbox{if}& \gamma>1,\\
\\
C\varepsilon^{\theta} & \hbox{if}& \gamma=1.
 \end{array}\right.
\end{equation}
Taking into account (\ref{interpola_2_caso2_dilatant}) in (\ref{interpola_1_caso2}), we can deduce 
\begin{equation}\label{interpola_3_caso2_dilatant}
\displaystyle \left|\int_{\widetilde \Omega_\varepsilon} \tilde u_\varepsilon\tilde \otimes \tilde u_\varepsilon:D_{x'} \tilde R^\varepsilon_q \tilde v\,dx'dy_3\right| \leq \left\{\begin{array}{rcl}
C\varepsilon^{2(\theta-{2\over r}(\gamma-1)-{1\over 2})}\|\tilde v\|_{W_0^{1,q}( \Omega )^{3}} & \hbox{if}& \gamma<1,\\
\\
C\varepsilon^{2(\theta-{\gamma-1\over r-1}-{1\over 2})}\|\tilde v\|_{W_0^{1,q}( \Omega )^{3}} & \hbox{if}& \gamma>1,\\
\\
C\varepsilon^{2(\theta-{1\over 2})}\|\tilde v\|_{W_0^{1,q}( \Omega )^{3}} & \hbox{if}& \gamma=1.
 \end{array}\right.
\end{equation}
To have a positive power of $\varepsilon$, we must choose $\theta\in [0,1]$ such that
\begin{equation}\label{theta_dilatant}
\theta\geq \left\{\begin{array}{rcl}
\displaystyle{1\over 2}& \hbox{for}& \gamma\leq 1,\\
\\
\displaystyle  {\gamma-1\over r-1}+{1\over 2}& \hbox{for}& 1<\gamma\leq \displaystyle{r+1\over 2}.
 \end{array}\right.
\end{equation}
We have $\gamma\leq \displaystyle{r+1\over 2}$ because $\theta$ has to be $\leq 1$.
\begin{itemize}
\item[{\bf $a_1)$}] {\bf For $r\ge 3$}: we can choose ${1\over q_2}>0$ as close as zero as we like and $\theta=1$. Then, from (\ref{inter2_q1}) we have ${1\over q'}<{1\over r}$ for $\gamma \leq {r+1\over 2}$, and if we consider (\ref{interpola_1_caso2}) with $q=r$, we have
\begin{equation*}
\begin{array}{l}
\displaystyle \left|\int_{\widetilde \Omega_\varepsilon} \tilde u_\varepsilon\tilde \otimes \tilde u_\varepsilon:D_{x'} \tilde R^\varepsilon_r \tilde v\,dx'dy_3\right| 
\leq \|\tilde u_\varepsilon\|_{L^{r}(\widetilde\Omega_\varepsilon)^3}^2\|D_{x'}\tilde R^\varepsilon_r \tilde v\|_{L^r(\widetilde\Omega_\varepsilon)^{3\times 2}}\leq C\varepsilon^{-1}  \|\tilde u_\varepsilon\|_{L^{r}(\widetilde\Omega_\varepsilon)^3}^2\|\tilde v\|_{W^{1,r}_0(\Omega)^3},
\end{array}
\end{equation*}
and using (\ref{estimates_u_tilde2less1})-(\ref{estimates_u_tilde2equal1}), we have
\begin{equation*}
\displaystyle \left|\int_{\widetilde \Omega_\varepsilon} \tilde u_\varepsilon\tilde \otimes \tilde u_\varepsilon:D_{x'} \tilde R^\varepsilon_r \tilde v\,dx'dy_3\right| \leq \left\{\begin{array}{rcl}
C\varepsilon^{1-{4\over r}(\gamma-1)}\|\tilde v\|_{W_0^{1,r}( \Omega )^{3}} & \hbox{if}& \gamma<1,\\
\\
C\varepsilon\|\tilde v\|_{W_0^{1,r}( \Omega )^{3}} & \hbox{if}& \gamma=1,\\
\\
C\varepsilon^{1-2{\gamma-1\over r-1}}\|\tilde v\|_{W_0^{1,r}( \Omega )^{3}} & \hbox{if}& 1<\gamma\leq \displaystyle{r+1\over 2}.
\end{array}\right.
\end{equation*}
Observe that if $\gamma<1$ then $1-{4\over r}(\gamma-1)>0$ and if $1<\gamma\leq \displaystyle{r+1\over 2}$, then $1-2\displaystyle{\gamma-1\over r-1}\ge0$. So we can choose $q=r$.
\item[{\bf $a_2)$}] {\bf For $2<r<3$}: in this case, we have $q_2={3r\over 3-r}$ and from (\ref{inter2_q1}), we obtain
$${1\over q'}={\theta \over 3}+{3-r\over 3r}.$$
For $\gamma\leq 1$, we can choose $\theta=\displaystyle {1\over 2}$ and condition (\ref{ineq1}) is satisfied for $q=r$ if we consider $\displaystyle {9\over 4}\leq r<3$. For $2<r<\displaystyle {9\over 4}$, by (\ref{theta_dilatant}) we have that $\theta\ge \displaystyle{1\over 2}$, and by condition (\ref{ineq1}) we have
$$q\ge \displaystyle {3r\over 4r-6}.$$\\
\\
For $1<\gamma\leq \displaystyle{r+1\over 2}$, by (\ref{theta_dilatant}) we have that $\theta\ge \displaystyle{\gamma-1\over r-1}+{1\over 2}$, and condition (\ref{ineq1}) is satisfied for $q=r$ only if
$$\gamma\leq -{11\over 2}+2r+{9\over 2r}.$$\\
\\
For $\displaystyle-{11\over 2}+2r+{9\over 2r}<\gamma\leq \displaystyle {r+1\over 2}$, by (\ref{theta_dilatant}) we have that $\theta\ge \displaystyle{\gamma-1\over r-1}+{1\over 2}$, and by condition (\ref{ineq1}) we have
$$q\ge {3r(r-1)\over 2(r-1)(2r-3)-2r(\gamma-1)}.$$
\end{itemize}
Finally, to have a positive power of $\varepsilon$ in estimate of the first term in the second member of of (\ref{inertial_case_2}), we have to choose
\begin{equation}\label{q_dilantant_term1}
q=\left\{\begin{array}{ccl}
r&\hbox{for}& r\ge 3 \hbox{ and } \gamma\leq \displaystyle {r+1\over 2},
\\
\\
\displaystyle {3r\over 4r-6}&\hbox{for}&  2<r<\displaystyle{9\over 4}\hbox{ and }\gamma\leq 1,\\
\\
r&\hbox{for}&  \displaystyle {9\over 4}\leq r<3\hbox{ and }\gamma\leq 1,\\
\\
r&\hbox{for}& 2<r<3\hbox{ and } 1<\gamma\leq \displaystyle -{11\over 2}+2r+{9\over 2r},\\
\\
\displaystyle {3r(r-1)\over 4r^2-8r+6-2\gamma r}&\hbox{for}& 2<r<3\hbox{ and }\displaystyle-{11\over 2}+2r+{9\over 2r}<\gamma \leq {r+1\over 2}.
\end{array}\right.
\end{equation}

\item[{\bf b)}] {\bf Estimate for the second and third term in the second member of (\ref{inertial_case_2})}: by using Hölder inequality, the second and third term are estimated by 
\begin{eqnarray}\label{estimate_(b)_dilatant}
&&\left|\varepsilon^{-1}\left(\int_{\widetilde \Omega_\varepsilon}\partial_{y_3} \tilde u_{\varepsilon,3}\tilde  u_\varepsilon \tilde R^\varepsilon_q \tilde v \,dx'dy_3+ \int_{\widetilde \Omega_\varepsilon}\tilde u_{\varepsilon,3}\partial_{y_3} \tilde u_\varepsilon\, \tilde R^\varepsilon_q \tilde v\,dx'dy_3\right)\right|\\
&&\leq C \varepsilon^{-1}\|\partial_{y_3}\tilde u_\varepsilon\|_{L^r(\widetilde\Omega_\varepsilon)^3}\|\tilde u_\varepsilon\|_{L^{q'}(\widetilde\Omega_\varepsilon)^3}\| R^\varepsilon_q \tilde v\|_{L^{q_1}(\widetilde\Omega_\varepsilon)^3},\nonumber
\end{eqnarray}
with
\begin{equation}\label{ineq2_dilatant}
{1\over r}+{1\over q'}+{1\over q_1}\leq 1.
\end{equation}
By Sobolev-Gagliardo-Nirenberg theorem, if $1\leq q<3$, then we have the continuity of embedding $W^{1,q} \hookrightarrow L^{q^*}$ where $1/q^*=1/q-1/3$. Moreover, if $q=3$,  then we have the continuity of embedding $W^{1,q} \hookrightarrow L^{q^*}$ where $q^*\in[3,+\infty)$ and by Morrey theorem if $q>3$, then we have the continuity of embedding $W^{1,q} \hookrightarrow L^{\infty}$. Then, we choose
\begin{equation}\label{def_q1_dilatant_term2}
q_1= \left\{\begin{array}{rcl}
\displaystyle {3q\over 3-q} & \hbox{for}& 2<q<3,\\
\\
\hbox{any positive value}& \hbox{for}& q=3,\\
\\
\infty& \hbox{for}& q>3.
 \end{array}\right.
\end{equation}
Then, the Sobolev-Gagliardo-Nirenberg theorem, the Morrey theorem and the second estimate of the restriction operator (\ref{ext_2}) imply that
$$\| R^\varepsilon_q \tilde v\|_{L^{q_1}(\widetilde\Omega_\varepsilon)^3}\leq C \|D R^\varepsilon_q \tilde v\|_{L^{q}(\widetilde\Omega_\varepsilon)^{3\times 3}}\leq C \|D_\varepsilon R^\varepsilon_q \tilde v\|_{L^{q}(\widetilde\Omega_\varepsilon)^{3\times 3}}\leq C\varepsilon^{-1}\|\tilde v\|_{W^{1,q}_0(\Omega)^3},$$
and (\ref{estimate_(b)_dilatant}) can be written by
\begin{eqnarray}\label{estimate_(b)_dilatant2}
&&\left|\varepsilon^{-1}\left(\int_{\widetilde \Omega_\varepsilon}\partial_{y_3} \tilde u_{\varepsilon,3}\tilde  u_\varepsilon \tilde R^\varepsilon_q \tilde v \,dx'dy_3+ \int_{\widetilde \Omega_\varepsilon}\tilde u_{\varepsilon,3}\partial_{y_3} \tilde u_\varepsilon\, \tilde R^\varepsilon_q \tilde v\,dx'dy_3\right)\right|\\
&&\leq C \varepsilon^{-2}\|\partial_{y_3}\tilde u_\varepsilon\|_{L^r(\widetilde\Omega_\varepsilon)^3}\|\tilde u_\varepsilon\|_{L^{q'}(\widetilde\Omega_\varepsilon)^3}\|\tilde v\|_{W^{1,q}_0(\Omega)^3}.\nonumber
\end{eqnarray}
It remains to estimate $ \|\tilde u_\varepsilon\|_{L^{q'}(\widetilde\Omega_\varepsilon)^3}$ using (\ref{estimates_u_tilde2less1})-(\ref{estimates_u_tilde2equal1}). So, our goal is to have $q$ the closer possible to $r$. Therefore, by interpolating between $L^r(\widetilde \Omega_\varepsilon)$ and $W_0^{1,r}(\widetilde \Omega_\varepsilon)$ with the parameter $\theta\in [0,1]$ such that 
\begin{equation}\label{inter2_q1_2}{1\over q'}={\theta\over r}+{1\over q_2}(1-\theta),
\end{equation}
where $q_2$ is given by (\ref{def_q2_dilatant}), we have (\ref{interpola_2_caso2_dilatant}).

Taking into account (\ref{estimates_u_tilde2less1})-(\ref{estimates_u_tilde2equal1}) and (\ref{interpola_2_caso2_dilatant}), (\ref{estimate_(b)_dilatant2}) can be written by
\begin{equation*}
\displaystyle \left|\varepsilon^{-1}\left(\int_{\widetilde \Omega_\varepsilon}\partial_{y_3} \tilde u_{\varepsilon,3}\tilde  u_\varepsilon \tilde R^\varepsilon_q \tilde v \,dx'dy_3+ \int_{\widetilde \Omega_\varepsilon}\tilde u_{\varepsilon,3}\partial_{y_3} \tilde u_\varepsilon\, \tilde R^\varepsilon_q \tilde v\,dx'dy_3\right)\right| \leq \left\{\begin{array}{rcl}
C\varepsilon^{\theta-{4\over r}(\gamma-1)-1}\|\tilde v\|_{W^{1,q}_0(\Omega)^3} & \hbox{if}& \gamma<1,\\
\\
C\varepsilon^{\theta-2{\gamma-1\over r-1}-1}\|\tilde v\|_{W^{1,q}_0(\Omega)^3} & \hbox{if}& \gamma>1,\\
\\
C\varepsilon^{\theta-1}\|\tilde v\|_{W^{1,q}_0(\Omega)^3} & \hbox{if}& \gamma=1.
 \end{array}\right.
\end{equation*}
To have a positive power of $\varepsilon$, we must choose $\theta=1$ for $\gamma\leq 1$. For $\gamma>1$, we are not able to find a bound proportional to a positive power of $\varepsilon$. However then
$$\int_{\widetilde \Omega_\varepsilon}\partial_{y_3} \tilde u_{\varepsilon,3}\tilde  u_\varepsilon \tilde R^\varepsilon_q \tilde v \,dx'dy_3+ \int_{\widetilde \Omega_\varepsilon}\tilde u_{\varepsilon,3}\partial_{y_3} \tilde u_\varepsilon\, \tilde R^\varepsilon_q \tilde v\,dx'dy_3$$
is just bounded, for $\gamma>1$, and may have an influence in the limit. We do not consider this case and limit us to $\gamma\leq 1$. 

\begin{itemize}
\item[{\bf $b_1)$}] {\bf For $r\ge 3$}: We can choose $q=r$ because $\theta=1$ and, by (\ref{inter2_q1_2}), $q'=r$. As $q\ge r$, we have that $q\ge 3$ and by definition (\ref{def_q1_dilatant_term2}), we have that condition (\ref{ineq2_dilatant}) is satisfied. Then, (\ref{estimate_(b)_dilatant2}) can be written by
\begin{eqnarray*}
&&\left|\varepsilon^{-1}\left(\int_{\widetilde \Omega_\varepsilon}\partial_{y_3} \tilde u_{\varepsilon,3}\tilde  u_\varepsilon \tilde R^\varepsilon_r \tilde v \,dx'dy_3+ \int_{\widetilde \Omega_\varepsilon}\tilde u_{\varepsilon,3}\partial_{y_3} \tilde u_\varepsilon\, \tilde R^\varepsilon_r \tilde v\,dx'dy_3\right)\right|\\
&&\leq C \varepsilon^{-2}\|\partial_{y_3}\tilde u_\varepsilon\|_{L^r(\widetilde\Omega_\varepsilon)^3}\|\tilde u_\varepsilon\|_{L^{r}(\widetilde\Omega_\varepsilon)^3}\|\tilde v\|_{W^{1,r}_0(\Omega)^3},\nonumber
\end{eqnarray*}
and taking into account (\ref{estimates_u_tilde2less1}) and (\ref{estimates_u_tilde2equal1}), we obtain
\begin{equation*}
\displaystyle \left|\varepsilon^{-1}\left(\int_{\widetilde \Omega_\varepsilon}\partial_{y_3} \tilde u_{\varepsilon,3}\tilde  u_\varepsilon \tilde R^\varepsilon_r \tilde v \,dx'dy_3+ \int_{\widetilde \Omega_\varepsilon}\tilde u_{\varepsilon,3}\partial_{y_3} \tilde u_\varepsilon\, \tilde R^\varepsilon_r \tilde v\,dx'dy_3\right)\right| \leq \left\{\begin{array}{rcl}
C\varepsilon^{-{4\over r}(\gamma-1)}\|\tilde v\|_{W^{1,r}_0(\Omega)^3} & \hbox{if}& \gamma<1,\\
\\
C\|\tilde v\|_{W^{1,r}_0(\Omega)^3} & \hbox{if}& \gamma=1.
 \end{array}\right.
\end{equation*}
Observe that if $\gamma<1$, then $-{4\over r}(\gamma-1)>0$.

\item[{\bf $b_2)$}] {\bf For $2<r<3$}: As $\theta=1$, by (\ref{inter2_q1_2}), we have $q'=r$. In this case, we have $q_1={3q\over 3-q}$ and by condition (\ref{ineq2_dilatant}), we obtain
\begin{equation}\label{relation_q_r_dilatant}
q\ge \displaystyle {3r\over 4r-6}.
\end{equation}
If we consider $q=r$ in (\ref{relation_q_r_dilatant}), we obtain that $ \displaystyle {9\over 4}\leq r<3$. For $2<r<\displaystyle {9\over 4}$ we consider $q=\displaystyle {3r\over 4r-6}$.
\end{itemize}

Finally, to have a positive power of $\varepsilon$ in estimate of the second and third term in the second member of (\ref{inertial_case_2}), we have to choose
\begin{equation}\label{q_dilantant_term2}
q=\left\{\begin{array}{ccl}
r&\hbox{for}& r\ge 3 \hbox{ and } \gamma\leq 1,
\\
\\
\displaystyle {3r\over 4r-6}&\hbox{for}&  2<r<\displaystyle {9\over 4}\hbox{ and }\gamma\leq 1,
\\
\\
r&\hbox{for}&  \displaystyle {9\over 4}\leq r<3\hbox{ and }\gamma\leq 1.
\end{array}\right.
\end{equation}
\end{itemize}

Putting together (\ref{q_dilantant_term2}) and (\ref{q_dilantant_term1}), it is the definition (\ref{Cr_case_dilatant}) which is the good one for $C(r)$, where $C(r)$ is the conjugate of $q$. Therefore, for the choice of $q$ given by (\ref{q_dilantant_term2}), the estimate of the intertial term satisfies (\ref{inertial_alpha_dilatant}), as we wanted.\\

Finally, considering $C(r)$ the conjugate of $q$, defined by (\ref{Cr_case_dilatant}), and taking into account (\ref{estim_caso2_1_dilatant})-(\ref{inertial_alpha_dilatant}) in (\ref{extension_1}), we get the second estimate in (\ref{esti_P2}) and finally, using the Ne${\check{\rm c}}$as inequality, there exists a representative $\tilde P_\varepsilon\in L^{C(r)}_0(\Omega)$  such that
$$\|\tilde P_\varepsilon\|_{L^{C(r)}(\Omega)}\leq C\|\nabla\tilde P_\varepsilon\|_{W^{-1,C(r)}(\Omega)^3}\leq C\|\nabla_{\varepsilon}\tilde P_\varepsilon\|_{W^{-1,C(r)}(\Omega)^3},$$
which implies the first estimate in (\ref{esti_P2}).
\end{proof}

\subsection{Adaptation of the unfolding method}\label{sec:unfolding}
The change of variables (\ref{dilatacion}) does not provide the information we need about the behavior of $\tilde u_\varepsilon$ in the microstructure associated to $\widetilde\Omega_\varepsilon$. To solve this difficulty, we use an adaptation introduced in \cite{Anguiano_SuarezGrau} of the unfolding method from \cite{CDG}.

Let us recall that this adaptation of the unfolding method divides the domain $\widetilde\Omega_\varepsilon$ in cubes of lateral length $\varepsilon$ and vertical length $1$. Thus, given $(\tilde{\varphi}_{\varepsilon},  \tilde \psi_\varepsilon) \in W^{1,q}_0(\Omega)^3\times L^{q'}_0(\Omega)$, $1<q<+\infty$ and $1/q+1/q'=1$, we define $(\hat{\varphi}_{\varepsilon},  \hat \psi_\varepsilon)$ by
\begin{equation}\label{phihat}
\hat{\varphi}_{\varepsilon}(x^{\prime},y)=\tilde{\varphi}_{\varepsilon}\left( {\varepsilon}\kappa\left(\frac{x^{\prime}}{{\varepsilon}} \right)+{\varepsilon}y^{\prime},y_3 \right),\quad 
\hat{\psi}_{\varepsilon}(x^{\prime},y)=\tilde{\psi}_{\varepsilon}\left( {\varepsilon}\kappa\left(\frac{x^{\prime}}{{\varepsilon}} \right)+{\varepsilon}y^{\prime},y_3 \right),\quad \hbox{ a.e. }(x^{\prime},y)\in \omega\times Y,
\end{equation}
assuming $\tilde \varphi_\varepsilon$ and $\tilde \psi_\varepsilon$ are extended by zero outside $\omega$, where the function $\kappa:\R^2\to \mathbb{Z}^2$ is defined by 
$$\kappa(x')=k'\Longleftrightarrow x'\in Y'_{k',1},\quad\forall\,k'\in\mathbb{Z}^2.$$

\begin{remark}\label{remarkCV_1}
The function $\kappa$ is well defined up to a set of zero measure in $\R^2$ (the set $\cup_{k'\in \mathbb{Z}^2}\partial Y'_{k',1}$). Moreover, for every $\varepsilon>0$, we have
$$\kappa\left({x'\over \varepsilon}\right)=k'\Longleftrightarrow x'\in Y'_{k',\varepsilon}.$$
\end{remark}
Following the proof of \cite[Lemma 4.9]{Anguiano_SuarezGrau}, we have the following estimates relating  $(\hat \varphi_\varepsilon,\hat \psi_\varepsilon)$ and $(\tilde \varphi_\varepsilon, \tilde \psi_\varepsilon)$.
\begin{lemma}\label{estimates_relation} The sequence $(\hat \varphi_\varepsilon, \hat \psi_\varepsilon)$ defined by (\ref{phihat}) satisfies the following estimates:
\begin{equation*}
\begin{array}{c}
\displaystyle
\medskip\|\hat \varphi_\varepsilon\|_{L^q(\omega\times Y)^3}\leq \|\tilde \varphi_\varepsilon\|_{L^q(\Omega)^3},\\
\medskip
\displaystyle \|D_{y'} \hat \varphi_\varepsilon\|_{L^q(\omega\times Y)^{3\times 2}}\leq \varepsilon \|D_{x'}\tilde \varphi_\varepsilon\|_{L^q(\Omega)^{3\times 2}},\quad \|\partial_{y_3} \hat \varphi_\varepsilon\|_{L^q(\omega\times Y)^{3 }}\leq   \|\partial_{y_3}\tilde \varphi_\varepsilon\|_{L^q(\Omega)^{3}},\\
\medskip
\displaystyle \|\mathbb{D}_{y'}[\hat \varphi_\varepsilon]\|_{L^q(\omega\times Y)^{3\times 2}}\leq \varepsilon \|\mathbb{D}_{x'}[\tilde \varphi_\varepsilon]\|_{L^q(\Omega)^{3\times 2}},\quad \|\partial_{y_3}[\hat u_\varepsilon]\|_{L^q(\omega\times Y)^{3 }}\leq   \|\partial_{y_3}[\tilde \varphi_\varepsilon]\|_{L^q(\Omega)^{3}},\\
\medskip
\|\hat \psi_\varepsilon\|_{L^{q'}(\omega\times Y)}\leq \|\tilde \psi_\varepsilon\|_{L^{q'}(\Omega)}.
\end{array}
\end{equation*}
\end{lemma}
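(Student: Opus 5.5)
The plan is to prove each inequality directly from the definition (\ref{phihat}), by decomposing $\omega$ (after extending by zero) into the cells $Y'_{k',\varepsilon}$ and applying the change of variables $x'=\varepsilon k'+\varepsilon y'$ cell by cell. By Remark \ref{remarkCV_1}, for $x'\in Y'_{k',\varepsilon}$ we have $\kappa(x'/\varepsilon)=k'$. Hence $\hat\varphi_\varepsilon(x',y)=\tilde\varphi_\varepsilon(\varepsilon k'+\varepsilon y',y_3)$ does not depend on $x'$ inside that cell.

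For the $L^q$ estimate I will write
$$\int_{\omega\times Y}|\hat\varphi_\varepsilon|^q\,dx'dy\le\sum_{k'\in\mathcal K_\varepsilon}\int_{Y'_{k',\varepsilon}}\int_Y|\tilde\varphi_\varepsilon(\varepsilon k'+\varepsilon y',y_3)|^q\,dy\,dx'.$$
The inner integral is constant in $x'$, so the $x'$-integration contributes $|Y'_{k',\varepsilon}|=\varepsilon^2$. The substitution $z'=\varepsilon k'+\varepsilon y'$ has Jacobian $dy'=\varepsilon^{-2}dz'$. The two factors cancel, and the cell integral becomes $\int_{Y'_{k',\varepsilon}\times(0,1)}|\tilde\varphi_\varepsilon(z',y_3)|^q\,dz'dy_3$. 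Summing over $k'$ gives at most $\|\tilde\varphi_\varepsilon\|^q_{L^q(\Omega)}$, with an inequality because boundary cells sticking out of $\omega$ only add zero contributions. The same computation with exponent $q'$ yields the bound for $\hat\psi_\varepsilon$.

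For the derivative estimates, the chain rule in each cell gives
$$D_{y'}\hat\varphi_\varepsilon(x',y)=\varepsilon\,(D_{x'}\tilde\varphi_\varepsilon)(\varepsilon\kappa(x'/\varepsilon)+\varepsilon y',y_3),\qquad \partial_{y_3}\hat\varphi_\varepsilon(x',y)=(\partial_{y_3}\tilde\varphi_\varepsilon)(\varepsilon\kappa(x'/\varepsilon)+\varepsilon y',y_3).$$
In other words, $D_{y'}\hat\varphi_\varepsilon$ is $\varepsilon$ times the unfolding of $D_{x'}\tilde\varphi_\varepsilon$, and $\partial_{y_3}\hat\varphi_\varepsilon$ is the unfolding of $\partial_{y_3}\tilde\varphi_\varepsilon$. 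Applying the $L^q$ estimate already proved to these unfolded functions gives the bounds with factor $\varepsilon$ and factor $1$ respectively. The symmetrized versions follow in the same way, since $\mathbb D_{y'}$ is the symmetric part of the $y'$-derivative block and commutes with the unfolding.

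The only delicate point is justifying the chain rule for Sobolev functions. Inside each open cell $\varepsilon k'+\varepsilon Y'$, the map $y'\mapsto\varepsilon k'+\varepsilon y'$ is affine, so $\hat\varphi_\varepsilon(x',\cdot)\in W^{1,q}(Y)$ for a.e. $x'$ and the formula holds there. Only $y$-derivatives appear, never $x'$-derivatives of $\hat\varphi_\varepsilon$, so the jumps across cell boundaries in $x'$ are irrelevant. I expect no real obstacle beyond this bookkeeping.
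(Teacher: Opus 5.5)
Your proposal is correct and follows essentially the same argument the paper relies on (it defers to the proof of Lemma 4.9 in Anguiano and Su\'arez-Grau): a cell-by-cell change of variables $x'=\varepsilon k'+\varepsilon y'$ whose Jacobian cancels $|Y'_{k',\varepsilon}|=\varepsilon^2$, combined with the chain-rule identities $D_{y'}\hat\varphi_\varepsilon=\varepsilon\,\widehat{D_{x'}\tilde\varphi_\varepsilon}$ and $\partial_{y_3}\hat\varphi_\varepsilon=\widehat{\partial_{y_3}\tilde\varphi_\varepsilon}$. Two points are worth stating explicitly: on cells meeting $\partial\omega$, the chain rule uses that the zero extension of $\tilde\varphi_\varepsilon\in W^{1,q}_0(\Omega)^3$ stays in $W^{1,q}$, with gradient equal to the zero extension of $D\tilde\varphi_\varepsilon$; and the inequality, rather than equality, comes only from enlarging $\omega$ to the union of these cells in the $x'$-integration.
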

\begin{definition}\label{Definition_unfolded}[Unfolded velocity and pressure] Let us define the unfolded velocity and pressure $(\hat u_\varepsilon, \hat P_\varepsilon)$ from  $(\tilde u_\varepsilon, \tilde P_\varepsilon)$ depending on   $r$:
\begin{itemize}
\item[--] ({\it Pseudoplastic of Newtonian fluids}) From $(\tilde u_\varepsilon, \tilde P_\varepsilon)\in H^1_0(\Omega)^3\times L^{2}_0(\Omega)$, we define $(\hat u_\varepsilon, \hat P_\varepsilon)$ by  using (\ref{phihat}) with $\tilde \varphi_\varepsilon=\tilde u_\varepsilon$ and $\tilde \psi_\varepsilon=\tilde P_\varepsilon$.\\

\item[--] ({\it Dilatant fluids}) From $(\tilde u_\varepsilon, \tilde P_\varepsilon)\in W^{1,r}_0(\Omega)^3\times L^{r'}_0(\Omega)$, we define $(\hat u_\varepsilon, \hat P_\varepsilon)$ by using (\ref{phihat}) with $\tilde \varphi_\varepsilon=\tilde u_\varepsilon$ and  $\tilde \psi_\varepsilon=\tilde P_\varepsilon$.
\end{itemize}
\end{definition}
\begin{remark}\label{remarkCV2}
For $k^{\prime}\in \mathcal{K}_{\varepsilon}$, the restrictions of $(\hat{u}_{\varepsilon},   \hat P_\varepsilon)$  to $Y^{\prime}_{k^{\prime},{\varepsilon}}\times Y$ does not depend on $x^{\prime}$, whereas as a function of $y$ it is obtained from $(\tilde{u}_{\varepsilon},  \tilde{P}_{\varepsilon})$ by using the change of variables $\displaystyle y^{\prime}=\frac{x^{\prime}- {\varepsilon}k^{\prime}}{{\varepsilon}}$,
which transforms $Y_{k^{\prime}, {\varepsilon}}$ into $Y$.
\end{remark}
Now, from  estimates of the extended velocity (\ref{estimates_u_tilde}), (\ref{estimates_u_tilde2less1}) and (\ref{estimates_u_tilde2equal1}), and from estimates of the pressure (\ref{esti_P}) and (\ref{esti_P2}) together with  Lemma  \ref{estimates_relation},  we have the following estimates for $(\hat u_\varepsilon,\hat P_\varepsilon)$.

\begin{lemma}\label{estimates_hat} We have the following estimates for the unfolded functions $(\hat u_\varepsilon,\hat P_\varepsilon)$ depending on the type of fluid:
 \begin{itemize}
 \item[(i)] {\it (Pseudoplastic or Newtonian fluids)}  Consider $1< r\leq 2$ and $\gamma \leq 1$. There exists a constant $C>0$ independent of $\varepsilon$, such that
 \begin{eqnarray}\medskip
 \|\hat u_\varepsilon\|_{L^2(\omega\times Y)^3}\leq C\varepsilon^{2-\gamma},& 
 \|D_{y}\hat u_\varepsilon\|_{L^2(\omega\times Y)^{3\times 3}}\leq C\varepsilon^{2-\gamma},& 
  \|\mathbb{D}_{y}[\hat u_\varepsilon]\|_{L^2(\omega\times Y)^{3\times 3}}\leq C\varepsilon^{2-\gamma},\label{estim_u_hat}\\
 \medskip
& \|\hat P_\varepsilon\|_{L^{C(r)}(\omega\times Y)}\leq C,&\label{estim_P_hat}
    \end{eqnarray}
    where $C(r)$ is given by (\ref{Cr_case_pseudo}).
 \item[(ii)] ({\it Dilatant fluids}) Consider $r>2$ and $\gamma \leq 1$. There exists a constant $C>0$ independent of $\varepsilon$, such that  estimates (\ref{estim_u_hat}) hold and also, depending on the value of $\gamma$, we have
 \begin{itemize}
 \item[--] If $\gamma<1$, it holds
 \begin{equation}\label{estim_u_hat2}
 \begin{array}{c}
 \|\hat u_\varepsilon\|_{L^r(\omega\times Y)^3}\leq C\varepsilon^{-{2\over r}(\gamma-1)+1},\quad  
 \|D_{y}\hat u_\varepsilon\|_{L^r(\omega\times Y)^{3\times 3}}\leq C\varepsilon^{-{2\over r}(\gamma-1)+1},\\
\\
  \|\mathbb{D}_{y}[\hat u_\varepsilon]\|_{L^r(\omega\times Y)^{3\times 3}}\leq C\varepsilon^{-{2\over r}(\gamma-1)+1}.
 \end{array} \end{equation}
  \item[--] If $\gamma=1$, it holds
 \begin{equation}\label{estim_u_hat22_1}
 \begin{array}{c}
 \|\hat u_\varepsilon\|_{L^r(\omega\times Y)^3}\leq C\varepsilon,\quad 
 \|D_{y}\hat u_\varepsilon\|_{L^r(\omega\times Y)^{3\times 3}}\leq C\varepsilon,\\
 \\
  \|\mathbb{D}_{y}[\hat u_\varepsilon]\|_{L^r(\omega\times Y)^{3\times 3}}\leq C\varepsilon.
  \end{array}
  \end{equation}

 \end{itemize}
 Moreover, we have the following estimate for the pressure
  \begin{eqnarray}\medskip
  \|\hat P_\varepsilon\|_{L^{C(r)}(\omega\times Y)}\leq C,&\label{estim_P_hat2}
    \end{eqnarray}
    where $C(r)$ is given by (\ref{Cr_case_dilatant}).
 \end{itemize}
 \end{lemma}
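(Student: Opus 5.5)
The proof is a direct combination of Lemma~\ref{estimates_relation} with the a priori estimates already obtained for $(\tilde u_\varepsilon,\tilde P_\varepsilon)$. We apply Lemma~\ref{estimates_relation} with $\tilde\varphi_\varepsilon=\tilde u_\varepsilon$ (extended by zero, so it lies in $W^{1,q}_0(\Omega)^3$) and $\tilde\psi_\varepsilon=\tilde P_\varepsilon$, as in Definition~\ref{Definition_unfolded}. We take $q=2$ in the pseudoplastic/Newtonian case, and both $q=2$ and $q=r$ in the dilatant case, since by Lemma~\ref{Estimates_lemma}(ii) estimates (\ref{estimates_u_tilde}) also hold there.

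\textbf{Velocity.} The $L^q$ bound on $\hat u_\varepsilon$ follows from $\|\hat u_\varepsilon\|_{L^q(\omega\times Y)^3}\le\|\tilde u_\varepsilon\|_{L^q(\Omega)^3}$. For $q=2$ this gives $C\varepsilon^{2-\gamma}$ by (\ref{estimates_u_tilde}). For $q=r$ it gives $C\varepsilon^{-\frac2r(\gamma-1)+1}$ or $C\varepsilon$ by (\ref{estimates_u_tilde2less1}) and (\ref{estimates_u_tilde2equal1}).

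For the gradient, split $D_y\hat u_\varepsilon=(D_{y'}\hat u_\varepsilon,\partial_{y_3}\hat u_\varepsilon)$ and treat the two blocks separately:
\begin{itemize}
\item Horizontal block: Lemma~\ref{estimates_relation} gives $\|D_{y'}\hat u_\varepsilon\|_{L^q}\le\varepsilon\|D_{x'}\tilde u_\varepsilon\|_{L^q}\le\varepsilon\|D_\varepsilon\tilde u_\varepsilon\|_{L^q}$.
\item Vertical block: $\|\partial_{y_3}\hat u_\varepsilon\|_{L^q}\le\|\partial_{y_3}\tilde u_\varepsilon\|_{L^q}=\varepsilon\|\varepsilon^{-1}\partial_{y_3}\tilde u_\varepsilon\|_{L^q}\le\varepsilon\|D_\varepsilon\tilde u_\varepsilon\|_{L^q}$.
\end{itemize}
Hence $\|D_y\hat u_\varepsilon\|_{L^q}\le C\varepsilon\|D_\varepsilon\tilde u_\varepsilon\|_{L^q}$.

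Inserting the bounds on $\|D_\varepsilon\tilde u_\varepsilon\|_{L^q}$ gives each claimed power:
\begin{itemize}
\item $q=2$: $\varepsilon\cdot\varepsilon^{1-\gamma}=\varepsilon^{2-\gamma}$;
\item $q=r$, $\gamma<1$: $\varepsilon\cdot\varepsilon^{-\frac2r(\gamma-1)}$;
\item $q=r$, $\gamma=1$: $\varepsilon\cdot 1=\varepsilon$.
\end{itemize}
The symmetrized gradient needs no separate argument, since pointwise $|\mathbb D_y[\hat u_\varepsilon]|\le|D_y\hat u_\varepsilon|$.

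\textbf{Pressure.} The last inequality of Lemma~\ref{estimates_relation}, used with exponent $C(r)$ in place of $q'$, gives $\|\hat P_\varepsilon\|_{L^{C(r)}(\omega\times Y)}\le\|\tilde P_\varepsilon\|_{L^{C(r)}(\Omega)}$. This is bounded by Proposition~\ref{Estimates_extended_lemma} in the case $1<r\le2$, and by Proposition~\ref{Estimates_extended_lemma2} in the case $r>2$, $\gamma\le1$. Lemma~\ref{estimates_relation} is stated for the conjugate exponent $q'$, but the unfolding is a pure change of variables. Its proof, via Remark~\ref{remarkCV2} (cell-wise measure preservation), therefore works for any Lebesgue exponent, and the pressure bound holds with exponent $C(r)$.

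The main point to check is the vertical derivative. The naive bound $\|\partial_{y_3}\tilde u_\varepsilon\|$ must be recognised as $\varepsilon$ times a component of $D_\varepsilon\tilde u_\varepsilon$, so that it carries the same gain of $\varepsilon$ as the horizontal derivatives. Everything else is direct substitution.
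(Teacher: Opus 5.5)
Your proposal is correct and follows the paper's route. The paper obtains this lemma simply by combining Lemma \ref{estimates_relation} with the velocity estimates (\ref{estimates_u_tilde})--(\ref{estimates_u_tilde2equal1}) and the pressure bounds (\ref{esti_P}), (\ref{esti_P2}), exactly as you do. Your remarks that $\|\partial_{y_3}\tilde u_\varepsilon\|=\varepsilon\|\varepsilon^{-1}\partial_{y_3}\tilde u_\varepsilon\|\le\varepsilon\|D_\varepsilon\tilde u_\varepsilon\|$, and that the unfolding bound holds for any Lebesgue exponent (in particular $C(r)$), only make explicit details the paper leaves implicit.
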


\subsection{Variational formulation of the unfolded functions}

In this section, we derive the variational formulation of the unfolded functions, defined in Section \ref{sec:unfolding}, which will be useful in the proofs of Theorems \ref{mainthmPseudo} and \ref{mainthmDilatant}.\\

First, we choose a test function $v(x',y)\in \mathcal{D}(\omega; C^\infty_{\#}(Y)^3)$ with $v(x',y)=0$ in $\omega\times T$. Multiplying (\ref{2}) by $v(x',x'/\varepsilon,y_3)$, integrating by parts, and taking into account the extension of $\tilde u_\varepsilon$ and $\tilde P_\varepsilon$, we have
\begin{equation}\label{variational_formulation_tilde}
\begin{array}{l}
\displaystyle
\medskip
{\color{black} \mu\varepsilon^\gamma }(\eta_0-\eta_\infty)\int_{ \Omega }(1+\lambda|\mathbb{D}_\varepsilon[\tilde u_\varepsilon]|^2)^{{r\over 2}-1}\mathbb{D}_\varepsilon[\tilde u_\varepsilon] :\left( \mathbb{D}_{x'}[ v]+ \varepsilon^{-1}\mathbb{D}_{y}[ v]\right)\,dx'dy_3\\
\medskip
\displaystyle
+{\color{black} \mu\varepsilon^\gamma}\eta_\infty\int_{ \Omega}\mathbb{D}_\varepsilon[\tilde u_\varepsilon] :\left( \mathbb{D}_{x'}[ v]+ \varepsilon^{-1}\mathbb{D}_{y}[ v]\right)\,dx'dy_3+\int_{\Omega}\left(\tilde u_\varepsilon\cdot \nabla_{\varepsilon}\right)\tilde u_\varepsilon\,v\,dx'dy_3\\
\medskip
\displaystyle
-\int_{\Omega}\tilde P_\varepsilon \left({\rm div}_{x'}v'+\varepsilon^{-1}{\rm div}_{y}v\right)\,dx'dy_3=\int_{\Omega} f'\cdot v'\,dx'dy_3+O_\varepsilon,
\end{array}
\end{equation}
where $O_\varepsilon$ is a generic real sequence which tends to zero with $\varepsilon$ and can change from line to line.\\

In this section, we are interested in giving the variational formulation of the unfolded functions using (\ref{variational_formulation_tilde}). We study in details the inertial term as it is the main novelty in this paper. The inertial term, which appears in (\ref{variational_formulation_tilde}), can be written as 
\begin{eqnarray}\label{inertial_variational_formulation}
\displaystyle \int_{ \Omega} \left(\tilde u_\varepsilon\cdot \nabla_{\varepsilon}\right)\tilde u_\varepsilon\,v\,dx'dy_3=-\int_{ \Omega} \tilde u_\varepsilon\tilde \otimes \tilde u_\varepsilon:D_{x'}  v\,dx'dy_3
\displaystyle +{1\over \varepsilon}\left(\int_{ \Omega}\partial_{y_3} \tilde u_{\varepsilon,3}\tilde  u_\varepsilon  v \,dx'dy_3+ \int_{ \Omega}\tilde u_{\varepsilon,3}\partial_{y_3} \tilde u_\varepsilon\,  v\,dx'dy_3\right)\,,
\end{eqnarray}
where $(u \tilde \otimes w)_{ij}=u_i w_j$, $i=1,2$, $j=1,2,3$.

Taking into account the adaptation of the unfolding method given in Section \ref{sec:unfolding} (see \cite[Section 4.2, Section 6]{Anguiano_SuarezGrau} for more details), by the change of variables given in Remark \ref{remarkCV2} we can deduce
\begin{eqnarray}\label{unfolded_inertial1} 
\int_{\omega\times Y} v dx^{\prime}dy
=  \int_{\Omega} v dx^{\prime}dy_3+O_\varepsilon,\quad \int_{\omega\times Y} {D}_{y^{\prime}}v dx^{\prime}dy
= {\varepsilon} \int_{\Omega} {D}_{x^{\prime}}v dx^{\prime}dy_3+O_\varepsilon,
\end{eqnarray}
\begin{eqnarray}\label{unfolded_inertial2}
\int_{\omega\times Y} \partial_{y_3}\hat{u}_{\varepsilon} dx^{\prime}dy=\int_{\Omega} \partial_{y_3}\tilde{u}_{\varepsilon} dx^{\prime}dy_3,
\end{eqnarray}
and 
\begin{eqnarray}\label{unfolded_inertial3}
\int_{\omega\times Y} \hat{u}_{\varepsilon} dx^{\prime}dy=\int_{\Omega}\tilde{u}_{\varepsilon} dx^{\prime}dy_3.
\end{eqnarray}
Then, by the change of variables given in Remark \ref{remarkCV2} and taking into account (\ref{unfolded_inertial1})--(\ref{unfolded_inertial3}) in (\ref{inertial_variational_formulation}), we obtain 
\begin{eqnarray}\label{inertial_variational_formulation_unfolded}\nonumber
&&\displaystyle \int_{ \Omega} \left(\tilde u_\varepsilon\cdot \nabla_{\varepsilon}\right)\tilde u_\varepsilon\,v\,dx'dy_3\\ \nonumber
&=& \varepsilon^{-1}\left(-\int_{ \omega\times Y} \hat u_\varepsilon\tilde \otimes \hat u_\varepsilon:D_{y'}  v\,dx'dy
\displaystyle +\int_{ \omega\times Y}\partial_{y_3} \hat u_{\varepsilon,3}\hat  u_\varepsilon  v \,dx'dy+ \int_{ \omega\times Y}\hat u_{\varepsilon,3}\partial_{y_3} \hat u_\varepsilon\,  v\,dx'dy\right)+O_\varepsilon\\
&=&\varepsilon^{-1}\int_{\omega\times Y}(\hat u_\varepsilon\cdot \nabla_y)\hat u_\varepsilon\,v\,dx'dy+O_\varepsilon.
\end{eqnarray}

Finally, we apply the change of variables given in Remark \ref{remarkCV2} to the other terms in (\ref{variational_formulation_tilde}) (see proof of Theorem 2.1 in \cite{Carreau_Ang_Bonn_SG} for more details) and taking into account (\ref{inertial_variational_formulation_unfolded}) for the inertial term, we can deduce
\begin{equation}\label{form_var_hat}\begin{array}{l}
\displaystyle
\medskip
{\color{black} \mu}\varepsilon^{\gamma-1}(\eta_0-\eta_\infty)\int_{ \omega\times Y }(1+\lambda |\varepsilon^{-1}\mathbb{D}_{y}[\hat u_\varepsilon]|^2)^{{r\over 2}-1}\left(\varepsilon^{-1} \mathbb{D}_{y}[\hat u_\varepsilon] \right): \mathbb{D}_{y}[  v]\,dx'dy\\
\medskip
\displaystyle
+{\color{black} \mu}\varepsilon^{\gamma-1}\eta_\infty\int_{ \omega\times Y}\varepsilon^{-1}\mathbb{D}_{y}[\hat u_\varepsilon] : \mathbb{D}_{y}[ v]\,dx'dy+\varepsilon^{-1}\int_{\omega\times Y}(\hat u_\varepsilon\cdot \nabla_y)\hat u_\varepsilon\,v\,dx'dy\\
\medskip
\displaystyle
-\int_{\omega\times Y}\hat P_\varepsilon\, {\rm div}_{x'}v'\,dx'dy-\varepsilon^{-1}\int_{\omega\times Y}\hat P_\varepsilon\,{\rm div}_{y}v\,dx'dy=\int_{\omega\times Y} f'\cdot v'\,dx'dy+O_\varepsilon.
\end{array}
\end{equation}
Now, let us define the functional $J_r$ by
$$
J_r(v)={\eta_0-\eta_\infty\over r\lambda}\int_{\omega\times Y}(1+\lambda|\mathbb{D}_y[v]|^2)^{r\over 2}dx'dy+{\eta_\infty\over 2}\int_{\omega\times Y}|\mathbb{D}_y[v]|^2dx'dy.
$$
Observe that $J_r$ is convex and Gateaux differentiable on $L^q(\omega; W^{1,q}_{\#}(Y)^3)$ with  $q=\max\{2,r\}$, (see \cite[Proposition 2.1 and Section 3]{Baranger} for more details) and $A_r=J'_r$ is given by
$$
(A_r(w),v)=(\eta_0-\eta_\infty)\int_{\omega\times Y}(1+\lambda|\mathbb{D}_y[w]|^2)^{{r\over 2}-1}\mathbb{D}_y[w]:\mathbb{D}_y[v]dx'dy+\eta_\infty\int_{\omega\times Y}\mathbb{D}_y[w]:\mathbb{D}_y[v]dx'dy.
$$
Applying \cite[Proposition 1.1., p.158]{Lions2}, in particular, we have that $A_r$ is monotone, i.e.,
\begin{equation}\label{monotonicity}
(A_r(w)-A_r(v),w-v)\ge0,\quad \forall w,v\in L^q(\omega; W^{1,q}_{\#}(Y)^3).
\end{equation}
On the other hand, for all $\varphi\in \mathcal{D}(\omega; C^\infty_{\#}(Y)^3)$ satisfying the divergence conditions ${\rm div}_{x'}\int_Y \varphi'\,dy=0$ in $\omega$ and ${\rm div}_{y}\varphi=0$ in $\omega\times Y$, we choose $v_\varepsilon$ defined by
$$
v_\varepsilon=\varphi-\varepsilon^{-1}\hat u_\varepsilon,
$$
as a test function in (\ref{form_var_hat}). Taking into account that ${\rm div}_{\varepsilon}\tilde u_\varepsilon=0$, we get that $\varepsilon^{-1}{\rm div}_y \hat u_\varepsilon=0$, and then we obtain
\begin{equation*}\begin{array}{l}
\displaystyle
\medskip
{\color{black} \mu}\varepsilon^{\gamma-1}(A_r(\varepsilon^{-1}\hat u_\varepsilon),v_\varepsilon)+\varepsilon^{-1}\int_{\Omega}(\hat u_\varepsilon\cdot \nabla_y)\hat u_\varepsilon\,v_\varepsilon\,dx'dy-\int_{\omega\times Y}\hat P_\varepsilon\, {\rm div}_{x'}v'_\varepsilon\,dx'dy=\int_{\omega\times Y} f'\cdot v'_\varepsilon\,dx'dy+O_\varepsilon,
\end{array}
\end{equation*}
which is equivalent to
\begin{equation*}\begin{array}{l}
\displaystyle
\medskip
{\color{black} \mu}\varepsilon^{\gamma-1}(A_r(\varphi)-A_r(\varepsilon^{-1}\hat u_\varepsilon),v_\varepsilon)-{\color{black} \mu}\varepsilon^{\gamma-1}(A_r(\varphi),v_\varepsilon)-\varepsilon^{-1}\int_{\omega\times Y}(\hat u_\varepsilon\cdot \nabla_y)\hat u_\varepsilon\,v_\varepsilon\,dx'dy\\
\medskip
\displaystyle 
+\int_{\omega\times Y}\hat P_\varepsilon\, {\rm div}_{x'}v'_\varepsilon\,dx'dy=-\int_{\omega\times Y} f'\cdot v'_\varepsilon\,dx'dy+O_\varepsilon.
\end{array}
\end{equation*}
Due to (\ref{monotonicity}), we can deduce
\begin{equation*}\begin{array}{l}
\displaystyle
\medskip
{\color{black} \mu}\varepsilon^{\gamma-1}(A_r(\varphi),v_\varepsilon)+\varepsilon^{-1}\int_{\Omega}(\hat u_\varepsilon\cdot \nabla_y)\hat u_\varepsilon\,v_\varepsilon\,dx'dy-\int_{\omega\times Y}\hat P_\varepsilon\, {\rm div}_{x'}v'_\varepsilon\,dx'dy\ge\int_{\omega\times Y} f'\cdot v'_\varepsilon\,dx'dy+O_\varepsilon,
\end{array}
\end{equation*}
i.e.,
\begin{equation}\label{v_ineq_carreau}\begin{array}{l}
\displaystyle
\medskip
{\color{black} \mu}\varepsilon^{\gamma-1}(\eta_0-\eta_\infty)\int_{\omega\times Y}(1+\lambda|\mathbb{D}_y[\varphi]|^2)^{{r\over 2}-1}\mathbb{D}_y[\varphi]:\mathbb{D}_y[v_\varepsilon]dx'dy+{\color{black} \mu}\varepsilon^{\gamma-1}\eta_\infty\int_{\omega\times Y}\mathbb{D}_y[\varphi]:\mathbb{D}_y[v_\varepsilon]dx'dy\\
\medskip
\displaystyle+\varepsilon^{-1}\int_{\omega\times Y}(\hat u_\varepsilon\cdot \nabla_y)\hat u_\varepsilon\,v_\varepsilon\,dx'dy-\int_{\omega\times Y}\hat P_\varepsilon\, {\rm div}_{x'}v'_\varepsilon\,dx'dy\ge\int_{\omega\times Y} f'\cdot v'_\varepsilon\,dx'dy+O_\varepsilon.
\end{array}
\end{equation}

\begin{remark} In the case of Newtonian fluids ($r=2$), due to the linearity, the variational formulation is the following one:
\begin{equation}\label{v_ineq_Newtonian}
\begin{array}{l}
\displaystyle
\medskip
{\color{black} \mu}\varepsilon^{\gamma-2}\eta_0\int_{\omega\times Y} \mathbb{D}_{y}[\hat u_\varepsilon]: \mathbb{D}_{y}[\varphi]dx'dy+\varepsilon^{-1}\int_{\omega\times Y}(\hat u_\varepsilon\cdot \nabla_y)\hat u_\varepsilon\,\varphi\,dx'dy\\
\medskip
\displaystyle-\int_{\omega\times Y}\hat P_\varepsilon\, {\rm div}_{x'}\varphi' \,dx'dy=\int_{\omega\times Y} f'\cdot \varphi' \,dx'dy+O_\varepsilon,
\end{array}
\end{equation}
for $\varphi(x',y)\in \mathcal{D}(\omega; C^\infty_{\#}(Y)^3)$ with $\varphi(x',y)=0$ in $\omega\times T$.
\end{remark}

Finally, we have the following result on the inertial term, which we will use in the next sections.
\begin{proposition}
Consider $1< r<+\infty$ and $\gamma\leq1$. Then, the inertial term satisfies
\begin{equation}\label{bounded_term_inertial_phi}
\left|\int_{\omega\times Y}(\hat u_\varepsilon\cdot \nabla_y)\hat u_\varepsilon\,\varphi\,dx'dy\right|\leq C\varepsilon^{4-2\gamma},
\end{equation}
where $\varphi\in \mathcal{D}(\omega; C^\infty_{\#}(Y)^3)$.
\end{proposition}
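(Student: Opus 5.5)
Since $\varphi$ is smooth and bounded with all its derivatives on $\omega\times Y$, the plan is to integrate by parts in $y$ to move the derivative off $\hat u_\varepsilon$. Then Cauchy--Schwarz gives a bound quadratic in $\|\hat u_\varepsilon\|_{L^2(\omega\times Y)^3}$, and the estimates (\ref{estim_u_hat}) of Lemma~\ref{estimates_hat} give the power $\varepsilon^{2(2-\gamma)}=\varepsilon^{4-2\gamma}$. This holds for every $1<r<+\infty$: for $r>2$ the $L^2$ estimates (\ref{estim_u_hat}) still hold by Lemma~\ref{estimates_hat}(ii), so no case distinction in $r$ is needed.

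\emph{Step 1: divergence-free structure.} Since ${\rm div}_\varepsilon\tilde u_\varepsilon=0$ in $\Omega$ (after the zero extension), the unfolded velocity satisfies ${\rm div}_y\hat u_\varepsilon=0$ in $\omega\times Y$, as already noted before (\ref{v_ineq_carreau}). Moreover, $\hat u_\varepsilon$ is $Y'$-periodic in $y'$ in the sense of traces, because it comes from the $H^1$ function $\tilde u_\varepsilon$ restricted to cells (Remark~\ref{remarkCV2}). It also vanishes on $y_3\in\{0,1\}$, since $\tilde u_\varepsilon=0$ on $\partial\Omega$.

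\emph{Step 2: integration by parts.} For a.e.\ $x'$, write
$$\int_Y(\hat u_\varepsilon\cdot\nabla_y)\hat u_\varepsilon\cdot\varphi\,dy=-\int_Y \hat u_\varepsilon\otimes\hat u_\varepsilon:D_y\varphi\,dy-\int_Y({\rm div}_y\hat u_\varepsilon)\,\hat u_\varepsilon\cdot\varphi\,dy .$$
The second term vanishes by Step 1. The boundary terms cancel on the lateral faces by periodicity of $\hat u_\varepsilon$ and $\varphi$, and vanish on $y_3\in\{0,1\}$ by the Dirichlet condition. This is the same manipulation as in (\ref{inertial_variational_formulation_unfolded}), now read backwards. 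Integrating in $x'$ and using H\"older's inequality yields
$$\left|\int_{\omega\times Y}(\hat u_\varepsilon\cdot\nabla_y)\hat u_\varepsilon\,\varphi\,dx'dy\right|\le \|D_y\varphi\|_{L^\infty(\omega\times Y)}\,\|\hat u_\varepsilon\|^2_{L^2(\omega\times Y)^3}.$$

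\emph{Step 3: conclusion.} The first estimate in (\ref{estim_u_hat}) gives $\|\hat u_\varepsilon\|^2_{L^2}\le C\varepsilon^{2(2-\gamma)}$, which is (\ref{bounded_term_inertial_phi}). Alternatively, one can avoid integration by parts and bound the term directly by $\|\varphi\|_{L^\infty}\|\hat u_\varepsilon\|_{L^2}\|D_y\hat u_\varepsilon\|_{L^2}$, using both estimates of (\ref{estim_u_hat}). This gives the same rate, $C\varepsilon^{2-\gamma}\varepsilon^{2-\gamma}$.

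The only delicate point is justifying the vanishing of the boundary terms. This requires the periodicity of the unfolded function across cell faces in $y'$, which holds because $\hat u_\varepsilon(x',\cdot)$ is a rescaled restriction of an $H^1(\Omega)$ function to a single cell. If this is doubtful, the direct H\"older bound in Step 3, which needs no integration by parts, suffices and I would use it as the primary argument.
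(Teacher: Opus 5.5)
Your Steps 1--3 follow the paper's proof: the paper writes the term as $-\int_{\omega\times Y}\hat u_\varepsilon\otimes\hat u_\varepsilon:D_y\varphi\,dx'dy$ and applies H\"older with only the bound $\|\hat u_\varepsilon\|_{L^2}\le C\varepsilon^{2-\gamma}$. However, your justification for dropping the lateral boundary terms is false. $\hat u_\varepsilon(x',\cdot)$ is \emph{not} $Y'$-periodic. For $x'\in Y'_{k',\varepsilon}$, its traces on $y_1=\pm\frac12$ are the traces of $\tilde u_\varepsilon$ on the two opposite faces $x_1=\varepsilon(k_1\pm\frac12)$ of the physical cell, and these are unrelated. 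Periodicity in $y'$ holds only for the limit $\hat u$.

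What is true is this: face $y_1=\frac12$ of cell $k'$ and face $y_1=-\frac12$ of cell $k'+e_1$ are the same physical face. They carry the same trace of $\tilde u_\varepsilon$ with opposite normals, and $\varphi$ is periodic in $y'$. So after integrating in $x'$ and summing over cells, the lateral terms telescope. What remains is weighted by the difference of $\int_{Y'_{k',\varepsilon}}\varphi\,dx'$ and $\int_{Y'_{k'+e_1,\varepsilon}}\varphi\,dx'$, which is $O(\varepsilon^3\|\nabla_{x'}\varphi\|_\infty)$. A trace inequality on $Y$ then bounds the remainder by $C\varepsilon\|\hat u_\varepsilon\|_{L^2}\big(\|\hat u_\varepsilon\|_{L^2}+\|D_y\hat u_\varepsilon\|_{L^2}\big)\le C\varepsilon^{5-2\gamma}$. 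Hence the identity used by the paper and by your Step 2 holds only up to a negligible error, and controlling that error already needs the gradient bound.

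Your direct bound, $\|\varphi\|_{L^\infty}\|\hat u_\varepsilon\|_{L^2}\|D_y\hat u_\varepsilon\|_{L^2}$, is correct and is the cleaner proof; present it as the argument and drop the periodicity claim. It uses the second estimate in \eqref{estim_u_hat}, which Lemma~\ref{estimates_hat} gives for all $1<r<+\infty$ and $\gamma\le 1$. That estimate follows from Lemma~\ref{estimates_relation} and \eqref{estimates_u_tilde}: $\|D_{y'}\hat u_\varepsilon\|_{L^2}\le\varepsilon\|D_{x'}\tilde u_\varepsilon\|_{L^2}\le C\varepsilon^{2-\gamma}$, and $\|\partial_{y_3}\hat u_\varepsilon\|_{L^2}\le\|\partial_{y_3}\tilde u_\varepsilon\|_{L^2}=\varepsilon\|\varepsilon^{-1}\partial_{y_3}\tilde u_\varepsilon\|_{L^2}\le C\varepsilon^{2-\gamma}$.

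The paper's route uses only the $L^2$ norm of $\hat u_\varepsilon$, but it relies on an integration by parts that is not exact at fixed $\varepsilon$. Your route uses both bounds, which hold at the same rate, and needs no boundary-term bookkeeping.
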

\begin{proof}
The inertial term can be written as 
\begin{eqnarray}\label{inertial_case_limit_pseudo}
\displaystyle \int_{\omega\times Y}(\hat u_\varepsilon\cdot \nabla_y)\hat u_\varepsilon\, \varphi\,dx'dy=-\int_{\omega \times Y} \hat u_\varepsilon \otimes \hat u_\varepsilon:D_{y} \varphi\,dx'dy,
\end{eqnarray}
where $(u \otimes w)_{ij}=u_i w_j$, $i,j=1,2,3$. 

By using Hölder inequality and the first estimate in (\ref{estim_u_hat}), we have
\begin{equation*}\label{interpola_1_caso2_limit_pseudo}
\begin{array}{l}
\displaystyle \left|\int_{\omega \times Y} \hat u_\varepsilon \otimes \hat u_\varepsilon:D_{y} \varphi\,dx'dy\right| 
\leq \|\hat u_\varepsilon\|_{L^{2}(\omega\times Y)^3}^2\|D_{y}\varphi \|_{L^\infty(\omega\times Y)^{3\times 3}}\leq  C\varepsilon^{4-2\gamma},
\end{array}
\end{equation*}
and by (\ref{inertial_case_limit_pseudo}), we obtain (\ref{bounded_term_inertial_phi}).

\end{proof}

\subsection{Proof of Theorem \ref{mainthmPseudo}. Pseudoplastic or Newtonian fluids: case $1< r\leq 2$}\label{sec:compactness_pseudo} 

In this section, we give some appropriate compactness results, and then, we give the proof of Theorem  \ref{mainthmPseudo}.

 \begin{lemma} \label{lemma_compactness_pseudo} Consider $1< r\leq 2$, $\gamma\leq1$ and $C(r)$ given by (\ref{Cr_case_pseudo}). Then, there exist:
 \begin{itemize}
 \item[i)]  A subsequence, still denoted by $(\tilde u_\varepsilon, \tilde P_\varepsilon)$, chosen from a sequence of $(\tilde u_\varepsilon, \tilde P_\varepsilon)$ of solutions of (\ref{2}) (we also denote by $(\hat u_\varepsilon, \hat P_\varepsilon)$ the subsequence of the corresponding unfolded functions),
 
 \item[ii)]   $(\tilde u, \tilde P)\in  H^1_0(0,1;L^2(\omega)^3)\times L^{C(r)}_0(\omega)$, where  $\tilde u=0$ on $\omega \times \{0,1\}$ and $\tilde u_3\equiv 0$, 
 
 \item[iii)]  $\hat u\in L^2(\omega; H^1_{0,\#}(Y)^3)$, with $\hat u=0$ on $\omega\times Y'\times \{0,1\}$ and satisfying the relation 
 \begin{equation}\label{relation_mean}\int_{Y}\hat u(x',y)\,dy=\int_0^1\tilde u(x',y_3)\,dy_3\quad \hbox{with }\int_Y\hat u_3(x',y)\,dy=0,
 \end{equation}
 
 \end{itemize} 
such that, up to a subsequence, 
\begin{equation}\label{conv_vel_tilde}
\varepsilon^{\gamma-2}\tilde u_\varepsilon \rightharpoonup (\tilde u',0)\hbox{ weakly in } H^1(0,1;L^2(\omega)^3),
\end{equation}
\begin{equation}\label{conv_vel_gorro}
\varepsilon^{\gamma-2}\hat u_\varepsilon\rightharpoonup \hat u\hbox{ weakly in }L^2(\omega; H^1(Y)^3),\end{equation}
\begin{equation}\label{conv_pres_tilde}
\tilde P_\varepsilon\to \tilde P \hbox{ strongly in }L^{C(r)}(\Omega),
\end{equation}
\begin{equation}\label{conv_pres_hat}
\hat P_\varepsilon\to \tilde P \hbox{ strongly in }L^{C(r)}(\omega\times Y).
\end{equation}
Moreover, $\tilde u$ and $\hat u$ satisfy the following divergence conditions 
\begin{equation}\label{divxproperty}
{\rm div}_{x'}\left(\int_0^1\tilde u'(x',y_3)\,dy_3\right)=0\  \hbox{ in }\omega,\quad \left(\int_0^1\tilde u'(x',y_3)\,dy_3\right)\cdot n=0\  \hbox{ in }\partial \omega,
\end{equation}
\begin{equation}\label{divyproperty}
 {\rm div}_{y}\,\hat u(x',y)=0\  \hbox{ in }\omega\times Y_f,\quad   {\rm div}_{x'}\left(\int_{Y_f}\hat u'(x',y)\,dy\right)=0\  \hbox{ in }\omega, \quad\left(\int_{Y_f}\hat u'(x',y)\,dy\right)\cdot n=0\  \hbox{ on }\partial\omega.
\end{equation}
\end{lemma}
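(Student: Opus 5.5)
The plan is to extract weak limits from the uniform bounds of Lemma~\ref{Estimates_lemma} and Lemma~\ref{estimates_hat}, and to obtain the strong pressure convergence from the bound on $\nabla_\varepsilon\tilde P_\varepsilon$ in Proposition~\ref{Estimates_extended_lemma}.

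\textbf{Velocity.} By \eqref{estimates_u_tilde}, $\varepsilon^{\gamma-2}\tilde u_\varepsilon$ is bounded in $L^2(\Omega)^3$. Since $\|\partial_{y_3}\tilde u_\varepsilon\|_{L^2}\le \varepsilon\|D_\varepsilon\tilde u_\varepsilon\|_{L^2}\le C\varepsilon^{2-\gamma}$, it is also bounded in $H^1(0,1;L^2(\omega)^3)$. This gives \eqref{conv_vel_tilde}, with $\tilde u=0$ on $y_3\in\{0,1\}$ because the trace is weakly continuous. To see that $\tilde u_3\equiv0$:
\begin{itemize}
\item multiply ${\rm div}_\varepsilon\tilde u_\varepsilon=0$ by $\varepsilon^{\gamma-1}$, which gives $\varepsilon\,{\rm div}_{x'}(\varepsilon^{\gamma-2}\tilde u'_\varepsilon)+\partial_{y_3}(\varepsilon^{\gamma-2}\tilde u_{\varepsilon,3})=0$;
\item pass to the limit in the distributional sense to get $\partial_{y_3}\tilde u_3=0$;
\item combine with the zero boundary values to conclude $\tilde u_3=0$.
\end{itemize}
For the divergence condition \eqref{divxproperty}, integrate ${\rm div}_\varepsilon\tilde u_\varepsilon=0$ in $y_3$; the $\partial_{y_3}$ term vanishes by the boundary conditions. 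Testing against $\psi(x')\in C^1(\overline\omega)$ and passing to the limit yields both the interior divergence condition and $V'\cdot n=0$.

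For the unfolded velocity, \eqref{estim_u_hat} gives a bound on $\varepsilon^{\gamma-2}\hat u_\varepsilon$ in $L^2(\omega;H^1(Y)^3)$, hence \eqref{conv_vel_gorro}. The limit $\hat u$ has the following properties:
\begin{itemize}
\item It is $Y'$-periodic in $y'$, shown by the standard unfolding argument comparing traces on opposite faces, as in \cite{Anguiano_SuarezGrau}.
\item It vanishes on $T$ and on $y_3\in\{0,1\}$, since $\hat u_\varepsilon$ does.
\item It satisfies ${\rm div}_y\hat u=0$, because ${\rm div}_\varepsilon\tilde u_\varepsilon=0$ unfolds to $\varepsilon^{-1}{\rm div}_y\hat u_\varepsilon=0$.
\item It satisfies \eqref{relation_mean}, by \eqref{unfolded_inertial3} applied with a test function in $x'$ and the vanishing of $\tilde u_3$.
\end{itemize}
The conditions in \eqref{divyproperty} on $\int_{Y_f}\hat u'$ then follow from \eqref{divxproperty}, since $\hat u=0$ in $T$.

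\textbf{Pressure (main obstacle).} Proposition~\ref{Estimates_extended_lemma} bounds $\tilde P_\varepsilon$ in $L^{C(r)}$ and $\nabla_\varepsilon\tilde P_\varepsilon$ in $W^{-1,C(r)}$. This gives weak convergence to some $\tilde P$, and $\|\partial_{y_3}\tilde P_\varepsilon\|_{W^{-1,C(r)}}\le C\varepsilon$ shows that $\tilde P$ is independent of $y_3$.

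Strong convergence is the delicate step. I would first sharpen the estimate from \eqref{extension_1}. For $\tilde v\in W^{1,q}_0(\Omega)^3$, the first three terms are bounded by $C\|\tilde v\|_{W^{1,q}_0}$, but with the fixed, $\varepsilon$-independent structure of the restriction operator. The inertial term is $O(\varepsilon^\alpha)$ by \eqref{inertial_alpha}.

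The compactness argument I would use goes as follows. Write $\nabla_{x'}\tilde P_\varepsilon$ as the sum of a sequence that converges strongly in $W^{-1,C(r)}$ and a term of the form $f'$ plus the viscous part, exactly as in \cite{Carreau_Ang_Bonn_SG,AnguianoSG_pressure}. This gives strong convergence of $\nabla\tilde P_\varepsilon$ in $W^{-1,C(r)}(\Omega)$. Nečas' inequality then converts it into strong convergence of $\tilde P_\varepsilon$ in $L^{C(r)}_0(\Omega)$, yielding \eqref{conv_pres_tilde}.

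An alternative route, which I would try if needed, uses compactness of the pressure extension through the restriction operator on cells, as in the proof of \cite[Lemma 5.4]{Anguiano_SuarezGrau}. Here the inertial term requires only that $\alpha\ge0$ in \eqref{inertial_alpha}, together with strict decay or equicontinuity, which is where the choice of $C(r)$ enters.

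Finally, \eqref{conv_pres_hat} follows because unfolding preserves strong convergence: $\|\hat P_\varepsilon-\tilde P\|_{L^{C(r)}(\omega\times Y)}\le\|\tilde P_\varepsilon-\tilde P\|_{L^{C(r)}(\Omega)}+\|\widehat{\tilde P}-\tilde P\|$, and the last term tends to zero by the standard unfolding property for a fixed function.
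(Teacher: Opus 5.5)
Your velocity arguments are the standard ones that the paper delegates to its Stokes predecessor, and they are fine: the $H^1(0,1;L^2(\omega)^3)$ bound, $\tilde u_3\equiv0$, the averaged divergence condition and the properties of $\hat u$. The same holds for the weak convergence and $y_3$-independence of $\tilde P_\varepsilon$, and for the step from \eqref{conv_pres_tilde} to \eqref{conv_pres_hat}.

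\textbf{The gap.} It is the strong convergence \eqref{conv_pres_tilde}. More precisely, it is the inertial term, which is exactly what the Stokes argument the paper points to does not contain. A Tartar-type argument requires the inertial part of \eqref{extension_1} to tend to zero when tested with $\tilde v_\varepsilon-\tilde v$, where $\tilde v_\varepsilon\rightharpoonup\tilde v$ in $W^{1,q}_0(\Omega)^3$. Equivalently, this functional must be compact in $W^{-1,C(r)}(\Omega)^3$.

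But \eqref{inertial_alpha} only gives $\alpha\ge0$. For the $C(r)$ of \eqref{Cr_case_pseudo} the interpolation is saturated: estimate a) forces $\theta=\gamma-1/2$ and estimate b) forces $\theta=2\gamma-1$. Hence $\alpha=0$ for every $3/4\le\gamma\le1$, including the critical case $\gamma=1$. At $\gamma=1$ the exponent in b) is $\theta-1\le 0$ for every choice of $q$, so enlarging $q$ does not help with the paper's embeddings.

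That bound is also stated in terms of the full $W^{1,q}_0$-norm of the test function. It enters through $\|D_{x'}\tilde R^\varepsilon_q\tilde v\|_{L^q}\le C\varepsilon^{-1}\|\tilde v\|_{W^{1,q}_0}$ and through the Sobolev embedding applied to $\tilde R^\varepsilon_q\tilde v$. So along a weakly null sequence it only gives $O(1)$, not $o(1)$. The ``strict decay or equicontinuity'' you invoke is precisely what is missing, and nothing in your proposal supplies it.

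\textbf{How to close it.} Show that the inertial functional tends to zero in norm in $W^{-1,C(r)}(\Omega)^3$; it can then be discarded and the remaining argument is the Stokes one. One way is to replace the global embeddings by Sobolev--Poincar\'e inequalities on the $\varepsilon$-cells. This applies to $H^1\hookrightarrow L^6$ in \eqref{interpola_2_caso2} and to the embedding used for $\tilde R^\varepsilon_q\tilde v$ in b). For instance, $\|w\|_{L^6(\widetilde\Omega_\varepsilon)^3}\le C\varepsilon^{1/3}\|D_\varepsilon w\|_{L^2(\widetilde\Omega_\varepsilon)^{3\times3}}$ for $w\in H^1_0(\widetilde\Omega_\varepsilon)^3$. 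With the same $q$, this yields a positive power of $\varepsilon$ in both a) and b), also at $\gamma=1$ (there one takes $q_1<\infty$ in b)).

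\textbf{A second, smaller point.} Your sentence decomposing $\nabla_{x'}\tilde P_\varepsilon$ does not say where compactness of the viscous part comes from. After the dilatation, \eqref{estim_restricted} controls $\varepsilon D_\varepsilon\tilde R^\varepsilon_q\tilde v$ through $\|\tilde v\|_{L^q}+\varepsilon\|D_{x'}\tilde v\|_{L^q}+\|\partial_{y_3}\tilde v\|_{L^q}$. The last term does not vanish along weakly null sequences, so the Rellich step has to be organised accordingly. For example, use test functions of the form $\phi_\varepsilon(x')\psi(y_3)$, together with a cell-wise Ne\v{c}as argument for the $y_3$-oscillations of the pressure.
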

\begin{proof}Arguing as in \cite[Section 3.3]{Carreau_Ang_Bonn_SG2} for pseudoplastic or Newtonian fluids, we obtain the proof of this lemma.\\
\end{proof}

\begin{proof}[{\bf Proof of Theorem \ref{mainthmPseudo}}]

We recall that $1< r\leq 2$.    The proof will be divided in three steps. In the first step, we consider the case $1< r<2$ and $\gamma<1$ and we will obtain the homogenized behavior given by a coupled system, with a constant macroviscosity, and we will decouple it to obtain the macroscopic law. Proceeding similarly, in the second step  we will consider the case $1< r<2$ and $\gamma=1$ and in the last step, we will consider the case $r=2$ and $\gamma\leq 1$.\\

{\it Step 1.} Consider $1< r<2$ and $\gamma<1$. 

We follows the lines of the proof to obtain (\ref{v_ineq_carreau}) but choosing now $v_\varepsilon$ and $\varphi$ such that $v_\varepsilon=\varepsilon^{1-\gamma}\varphi-\varepsilon^{-1}\hat u_\varepsilon$  with $\varphi\in \mathcal{D}(\omega; C^\infty_{\#}(Y)^3)$ satisfying the divergence conditions ${\rm div}_{x'}\int_Y \varphi'\,dy=0$ in $\omega$ and ${\rm div}_{y}\varphi=0$ in $\omega\times Y$.  Then, we get
\begin{equation*}\begin{array}{l}
\displaystyle
\medskip
{\color{black} \mu}\varepsilon^{1-\gamma}(\eta_0-\eta_\infty)\int_{\omega\times Y}(1+\lambda\varepsilon^{2(1-\gamma)}|\mathbb{D}_y[\varphi]|^2)^{{r\over 2}-1}\mathbb{D}_y[\varphi]:\mathbb{D}_y[\varphi-\varepsilon^{\gamma-2}  \hat u_\varepsilon]\,dx'dy\\
\medskip
\displaystyle
+{\color{black} \mu}\varepsilon^{1-\gamma}\eta_\infty\int_{\omega\times Y}\mathbb{D}_y[\varphi]:\mathbb{D}_y[\varphi-\varepsilon^{\gamma-2}  \hat u_\varepsilon]dx'dy+\varepsilon^{-\gamma}\int_{\omega\times Y}(\hat u_\varepsilon\cdot \nabla_y)\hat u_\varepsilon\,(\varphi-\varepsilon^{\gamma-2}  \hat u_\varepsilon)\,dx'dy\\
\medskip
\displaystyle-\varepsilon^{1-\gamma}\int_{\omega\times Y}\hat P_\varepsilon\, {\rm div}_{x'}(\varphi'-\varepsilon^{\gamma-2}  \hat u'_\varepsilon)\,dx'dy\ge \varepsilon^{1-\gamma}\int_{\omega\times Y} f'\cdot (\varphi'-\varepsilon^{\gamma-2}  \hat u'_\varepsilon)\,dx'dy+O_\varepsilon.
\end{array}
\end{equation*}
Taking into account that
$$\int_{\omega\times Y}(\hat u_\varepsilon\cdot \nabla_y)\hat u_\varepsilon\, \hat u_\varepsilon\,dx'dy=0,$$
and dividing by $\varepsilon^{1-\gamma}$, we have
\begin{equation}\label{v_ineq_carreau_12_1}\begin{array}{l}
\displaystyle
\medskip
 {\color{black} \mu}(\eta_0-\eta_\infty)\int_{\omega\times Y}(1+\lambda\varepsilon^{2(1-\gamma)}|\mathbb{D}_y[\varphi]|^2)^{{r\over 2}-1}\mathbb{D}_y[\varphi]:\mathbb{D}_y[\varphi-\varepsilon^{\gamma-2}  \hat u_\varepsilon]\,dx'dy\\
\medskip
\displaystyle
+{\color{black} \mu} \eta_\infty\int_{\omega\times Y}\mathbb{D}_y[\varphi]:\mathbb{D}_y[\varphi-\varepsilon^{\gamma-2}  \hat u_\varepsilon]dx'dy+\varepsilon^{-1}\int_{\omega\times Y}(\hat u_\varepsilon\cdot \nabla_y)\hat u_\varepsilon\,\varphi\,dx'dy\\
\medskip
\displaystyle- \int_{\omega\times Y}\hat P_\varepsilon\, {\rm div}_{x'}(\varphi'-\varepsilon^{\gamma-2}  \hat u'_\varepsilon)\,dx'dy\ge  \int_{\omega\times Y} f'\cdot (\varphi'-\varepsilon^{\gamma-2}  \hat u'_\varepsilon)\,dx'dy+O_\varepsilon.
\end{array}
\end{equation}
Taking into account (\ref{bounded_term_inertial_phi}), we get that
$$\left|\varepsilon^{-1}\int_{\omega\times Y}(\hat u_\varepsilon\cdot \nabla_y)\hat u_\varepsilon\,\varphi\,dx'dy\right|\leq C\varepsilon^{3-2\gamma},$$
and since $\gamma< 1$, we deduce that the inertial term tends to zero, and we have
$$\varepsilon^{-1}\int_{\omega\times Y}(\hat u_\varepsilon\cdot \nabla_y)\hat u_\varepsilon\,\varphi\,dx'dy=O_\varepsilon.$$
Then, the variational formulation (\ref{v_ineq_carreau_12_1}) reads as follows
\begin{equation}\label{v_ineq_carreau_12}\begin{array}{l}
\displaystyle
\medskip
 {\color{black} \mu}(\eta_0-\eta_\infty)\int_{\omega\times Y}(1+\lambda\varepsilon^{2(1-\gamma)}|\mathbb{D}_y[\varphi]|^2)^{{r\over 2}-1}\mathbb{D}_y[\varphi]:\mathbb{D}_y[\varphi-\varepsilon^{\gamma-2}  \hat u_\varepsilon]\,dx'dy\\
\medskip
\displaystyle
+{\color{black} \mu} \eta_\infty\int_{\omega\times Y}\mathbb{D}_y[\varphi]:\mathbb{D}_y[\varphi-\varepsilon^{\gamma-2}  \hat u_\varepsilon]dx'dy\\
\medskip
\displaystyle- \int_{\omega\times Y}\hat P_\varepsilon\, {\rm div}_{x'}(\varphi'-\varepsilon^{\gamma-2}  \hat u'_\varepsilon)\,dx'dy\ge  \int_{\omega\times Y} f'\cdot (\varphi'-\varepsilon^{\gamma-2}  \hat u'_\varepsilon)\,dx'dy+O_\varepsilon.
\end{array}
\end{equation} 

Now, we can pass to the limit in (\ref{v_ineq_carreau_12}) when $\varepsilon\to 0$, and arguing as the proof of Theorem 2.1 in \cite{Carreau_Ang_Bonn_SG2} we can deduce (\ref{thm:system}).

{\it Step 2.} We consider $1< r<2$ and $\gamma=1$. 
 
Considering $\gamma=1$ in the variational formulation (\ref{v_ineq_carreau}), we get
\begin{equation}\label{v_ineq_carreau21}\begin{array}{l}
\displaystyle
\medskip
{\color{black} \mu}(\eta_0-\eta_\infty)\int_{\omega\times Y}(1+\lambda|\mathbb{D}_y[\varphi]|^2)^{{r\over 2}-1}\mathbb{D}_y[\varphi]:\mathbb{D}_y[\varphi-\varepsilon^{-1}\hat u_\varepsilon]dx'dy\\
\medskip
\displaystyle+{\color{black} \mu}\eta_\infty\int_{\omega\times Y}\mathbb{D}_y[\varphi]:\mathbb{D}_y[\varphi-\varepsilon^{-1}\hat u_\varepsilon]dx'dy
\displaystyle+\varepsilon^{-1}\int_{\omega\times Y}(\hat u_\varepsilon\cdot \nabla_y)\hat u_\varepsilon\,(\varphi-\varepsilon^{-1}\hat u_\varepsilon)\,dx'dy\\
\medskip
\displaystyle-\int_{\omega\times Y}\hat P_\varepsilon\, {\rm div}_{x'}(\varphi'-\varepsilon^{-1}\hat u'_\varepsilon)\,dx'dy\ge\int_{\omega\times Y} f'\cdot (\varphi'-\varepsilon^{-1}\hat u'_\varepsilon)\,dx'dy+O_\varepsilon,
\end{array}
\end{equation}
with $\varphi\in \mathcal{D}(\omega; C^\infty_{\#}(Y)^3)$ satisfying the divergence conditions ${\rm div}_{x'}\int_Y \varphi'\,dy=0$ in $\omega$ and ${\rm div}_{y}\varphi=0$ in $\omega\times Y$.

Taking into account that
$$\int_{\omega\times Y}(\hat u_\varepsilon\cdot \nabla_y)\hat u_\varepsilon\, \hat u_\varepsilon\,dx'dy=0,$$
and that using (\ref{bounded_term_inertial_phi}) with $\gamma=1$, we get that
$$\left|\varepsilon^{-1}\int_{\omega\times Y}(\hat u_\varepsilon\cdot \nabla_y)\hat u_\varepsilon\,\varphi\,dx'dy\right|\leq C\varepsilon,$$
then, we can deduce that the inertial term tends to zero, and we have
$$\varepsilon^{-1}\int_{\omega\times Y}(\hat u_\varepsilon\cdot \nabla_y)\hat u_\varepsilon\,(\varphi-\varepsilon^{-1}  \hat u_\varepsilon)\,dx'dy=O_\varepsilon.$$

Then, the variational formulation (\ref{v_ineq_carreau21}) reads  
\begin{equation*}\begin{array}{l}
\displaystyle
\medskip
{\color{black} \mu}(\eta_0-\eta_\infty)\int_{\omega\times Y}(1+\lambda|\mathbb{D}_y[\varphi]|^2)^{{r\over 2}-1}\mathbb{D}_y[\varphi]:\mathbb{D}_y[\varphi-\varepsilon^{-1}\hat u_\varepsilon]dx'dy+{\color{black} \mu}\eta_\infty\int_{\omega\times Y}\mathbb{D}_y[\varphi]:\mathbb{D}_y[\varphi-\varepsilon^{-1}\hat u_\varepsilon]dx'dy\\
\medskip
\displaystyle-\int_{\omega\times Y}\hat P_\varepsilon\, {\rm div}_{x'}(\varphi'-\varepsilon^{-1}\hat u'_\varepsilon)\,dx'dy\ge\int_{\omega\times Y} f'\cdot (\varphi'-\varepsilon^{-1}\hat u'_\varepsilon)\,dx'dy+O_\varepsilon.
\end{array}
\end{equation*}
Now, passing to the limit and arguing as in the proof of Theorem 2.1 in \cite{Carreau_Ang_Bonn_SG}, we obtain (\ref{thm:system_gamma1}).

 {\it Step 3.} We consider $r=2$ and $\gamma\leq 1$. 
 
 We consider the variational formulation (\ref{v_ineq_Newtonian}).  
 
 Taking into account (\ref{bounded_term_inertial_phi}), we get that
$$\left|\varepsilon^{-1}\int_{\omega\times Y}(\hat u_\varepsilon\cdot \nabla_y)\hat u_\varepsilon\,\varphi\,dx'dy\right|\leq C\varepsilon^{3-2\gamma},$$
and since $\gamma\leq 1$, we deduce that the inertial term tends to zero, and we have
$$\varepsilon^{-1}\int_{\omega\times Y}(\hat u_\varepsilon\cdot \nabla_y)\hat u_\varepsilon\,\varphi\,dx'dy=O_\varepsilon.$$
Then, the variational formulation (\ref{v_ineq_Newtonian}) reads as follows
$$\begin{array}{l}
\displaystyle
\medskip
{\color{black} \mu}\varepsilon^{\gamma-2}\eta_0\int_{\omega\times Y} \mathbb{D}_{y}[\hat u_\varepsilon]: \mathbb{D}_{y}[\varphi]dx'dy-\int_{\omega\times Y}\hat P_\varepsilon\, {\rm div}_{x'}\varphi' \,dx'dy=\int_{\omega\times Y} f'\cdot \varphi' \,dx'dy+O_\varepsilon,
\end{array}
$$
for  $\varphi(x',y)\in \mathcal{D}(\omega; C^\infty_{\#}(Y)^3)$ with $\varphi(x',y)=0$ in $\omega\times T$. Now, passing to the limit and arguing as in the proof of~\cite[Theorem 2.5]{Carreau_Ang_Bonn_SG2}, we get the linear effective 2D Darcy's law 
(\ref{thm:system}), which concludes the proof of Theorem \ref{mainthmPseudo}-$(ii)$.

\end{proof}

\subsection{Proof of Theorem \ref{mainthmDilatant}. Dilatant fluids: case $r>2$} \label{sec:compactness_dilatant}

In this section, we give some appropriate compactness results, and then, we give the proof of Theorem  \ref{mainthmDilatant}.

 \begin{lemma} \label{lemma_compactness_dilatant}Consider $r> 2$, $\gamma\le 1$ and and $C(r)$ defined by (\ref{Cr_case_dilatant}).  Then, there exist:
 \begin{itemize}
 \item[i)]  A subsequence, still denoted by $(\tilde u_\varepsilon, \tilde P_\varepsilon)$, chosen from a sequence of $(\tilde u_\varepsilon, \tilde P_\varepsilon)$ of solutions of (\ref{2}) (we also denote by $(\hat u_\varepsilon, \hat P_\varepsilon)$ the subsequence of the corresponding unfolded functions),
 
 \item[ii)]   $(\tilde u, \tilde P)\in   H^{1}(0,1;L^2(\omega)^3)\times L^{C(r)}_0(\omega)$ if $\gamma<1$ and $(\tilde u, \tilde P)\in   W^{1,r}_0(0,1;L^r(\omega)^3)\times L^{C(r)}_0(\omega)$ if $\gamma=1$, where $\tilde u=0$ on $\omega \times \{0,1\}$ and $\tilde u_3\equiv 0$, 
 
 \item[iii)]  $\hat u\in L^2(\omega; H^{1}_{0,\#}(Y)^3)$ if $\gamma<1$ and $\hat u\in L^r(\omega; W^{1,r}_{0,\#}(Y)^3)$ if $\gamma=1$, with $\hat u=0$ on $\omega\times Y'\times \{0,1\}$ and satisfying the relation 
 \begin{equation}\label{relation_mean2}\int_{Y}\hat u(x',y)\,dy=\int_0^1\tilde u(x',y_3)\,dy_3\quad \hbox{with }\int_Y\hat u_3(x',y)\,dy=0,
 \end{equation}
 \end{itemize} 
such that, up to a subsequence,
\begin{equation}\label{conv_vel_tilde2}
\varepsilon^{\gamma-2}\tilde u_\varepsilon \rightharpoonup (\tilde u',0)\hbox{ weakly in } H^{1}(0,1;L^2(\omega)^3)\hbox{ if }\gamma<1,
\end{equation}
\begin{equation}\label{conv_vel_tilde2_2}
\varepsilon^{-1}\tilde u_\varepsilon \rightharpoonup (\tilde u',0)\hbox{ weakly in } W^{1,r}(0,1;L^r(\omega)^3)\hbox{ if }\gamma=1,
\end{equation}
\begin{equation}\label{conv_vel_gorro2}
\varepsilon^{\gamma-2}\hat u_\varepsilon\rightharpoonup \hat u\hbox{ weakly in }L^2(\omega; H^{1}(Y)^3)\hbox{ if }\gamma<1,
\end{equation}
\begin{equation}\label{conv_vel_gorro2_2}
\varepsilon^{-1}\hat u_\varepsilon\rightharpoonup \hat u\hbox{ weakly in }L^r(\omega; W^{1,r}(Y)^3)\hbox{ if }\gamma=1,
\end{equation}
\begin{equation}\label{conv_pres_tilde2}
\tilde P_\varepsilon\to \tilde P \hbox{ strongly in }L^{C(r)}(\Omega),
\end{equation}
\begin{equation}\label{conv_pres_hat2}
\hat P_\varepsilon\to \tilde P \hbox{ strongly in }L^{C(r)}(\omega\times Y).
\end{equation}

Moreover, $\tilde u$ and $\hat u$ satisfy the following divergence conditions in both cases
\begin{equation}\label{divxproperty2}
{\rm div}_{x'}\left(\int_0^1\tilde u'(x',y_3)\,dy_3\right)=0\  \hbox{ in }\omega,\quad \left(\int_0^1\tilde u'(x',y_3)\,dy_3\right)\cdot n=0\  \hbox{ in }\partial \omega,
\end{equation}
\begin{equation}\label{divyproperty2}
 {\rm div}_{y}\,\hat u(x',y)=0\  \hbox{ in }\omega\times Y_f,\quad   {\rm div}_{x'}\left(\int_{Y_f}\hat u'(x',y)\,dy\right)=0\  \hbox{ in }\omega, \quad\left(\int_{Y_f}\hat u'(x',y)\,dy\right)\cdot n=0\  \hbox{ on }\partial\omega.
\end{equation}
\end{lemma}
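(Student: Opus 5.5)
The plan follows the pseudoplastic case, Lemma \ref{lemma_compactness_pseudo}, which was proved as in \cite[Section 3.3]{Carreau_Ang_Bonn_SG2}. The only new ingredients are the dilatant a priori bounds and the pressure exponent $C(r)$ from Proposition \ref{Estimates_extended_lemma2}. I would first treat the velocities, splitting according to the value of $\gamma$.

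\textbf{Velocities.} For $\gamma<1$ I use only the $L^2$ bounds (\ref{estimates_u_tilde}), which hold for $r>2$ by Lemma \ref{Estimates_lemma}(ii). They give that $\varepsilon^{\gamma-2}\tilde u_\varepsilon$ is bounded in $L^2$ and that $\varepsilon^{\gamma-2}\partial_{y_3}\tilde u_\varepsilon$ is bounded in $L^2$, since $\|\partial_{y_3}\tilde u_\varepsilon\|_{L^2}\le \varepsilon\|D_\varepsilon\tilde u_\varepsilon\|_{L^2}\le C\varepsilon^{2-\gamma}$. Weak compactness then yields (\ref{conv_vel_tilde2}), and the trace on $y_3\in\{0,1\}$ passes to the limit.

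For $\gamma=1$ I use (\ref{estimates_u_tilde2equal1}) instead. It gives that $\varepsilon^{-1}\tilde u_\varepsilon$ and $\varepsilon^{-1}\partial_{y_3}\tilde u_\varepsilon$ are bounded in $L^r$, whence (\ref{conv_vel_tilde2_2}).

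To see that $\tilde u_3\equiv0$, I use ${\rm div}_\varepsilon\tilde u_\varepsilon=0$. Multiplying by the appropriate power of $\varepsilon$ gives $\partial_{y_3}(\varepsilon^{\gamma-2}\tilde u_{\varepsilon,3})=-\varepsilon\,{\rm div}_{x'}(\varepsilon^{\gamma-2}\tilde u'_\varepsilon)\to0$ in $\mathcal D'$. So the limit $\tilde u_3$ is independent of $y_3$, and the zero boundary values force $\tilde u_3=0$.

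For (\ref{divxproperty2}), I integrate ${\rm div}_\varepsilon\tilde u_\varepsilon=0$ in $y_3$ over $(0,1)$. This gives ${\rm div}_{x'}\int_0^1\tilde u'_\varepsilon\,dy_3=0$ in $\omega$, with zero normal trace because $\tilde u_\varepsilon\in W^{1,q}_0(\Omega)$. Both facts pass to the weak limit.

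\textbf{Unfolded velocity.} Lemma \ref{estimates_hat} gives bounds in $L^2(\omega;H^1(Y))$ (respectively $L^r(\omega;W^{1,r}(Y))$) for $\varepsilon^{\gamma-2}\hat u_\varepsilon$ (respectively $\varepsilon^{-1}\hat u_\varepsilon$). This yields (\ref{conv_vel_gorro2}) and (\ref{conv_vel_gorro2_2}). The limit $\hat u$ vanishes on $T$ and on $y_3\in\{0,1\}$, and it is $Y'$-periodic. Periodicity is obtained by the standard argument of \cite{CDG,Anguiano_SuarezGrau}: compare $\hat u_\varepsilon(x'+\varepsilon e_i,y)$ with $\hat u_\varepsilon(x',y+e_i)$ in the limit.

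The relation (\ref{relation_mean2}) follows from (\ref{unfolded_inertial3}) applied against $x'$-test functions. The condition ${\rm div}_y\hat u=0$ follows from $\varepsilon^{-1}{\rm div}_y\hat u_\varepsilon=0$. Since $\hat u=0$ in $T$, $\int_{Y_f}\hat u'=\int_Y\hat u'=\int_0^1\tilde u'$, and the remaining conditions in (\ref{divyproperty2}) are therefore inherited from (\ref{divxproperty2}).

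\textbf{Pressure.} This is the step I expect to be the main obstacle. By Proposition \ref{Estimates_extended_lemma2}, $\tilde P_\varepsilon$ is bounded in $L^{C(r)}_0(\Omega)$, and $\nabla_\varepsilon\tilde P_\varepsilon$ is bounded in $W^{-1,C(r)}$.

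Consequently $\partial_{y_3}\tilde P_\varepsilon=\varepsilon(\varepsilon^{-1}\partial_{y_3}\tilde P_\varepsilon)\to0$ in $W^{-1,C(r)}$. Hence any weak limit $\tilde P$ is independent of $y_3$, so $\tilde P\in L^{C(r)}_0(\omega)$.

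For strong convergence I would use the Nečas inequality in the form $\|\tilde P_\varepsilon-\tilde P\|_{L^{C(r)}}\le C\|\nabla(\tilde P_\varepsilon-\tilde P)\|_{W^{-1,C(r)}}$. To bound the right-hand side, I use the identity (\ref{extension_1}) to write $\nabla_{x'}\tilde P_\varepsilon$ as a sum of three kinds of terms:
\begin{itemize}
\item viscous terms, which are bounded in a space compactly embedded in $W^{-1,C(r)}$. Here I would use $\mu\varepsilon^\gamma\|\mathbb D_\varepsilon\tilde u_\varepsilon\|_{L^2}\|D_\varepsilon\tilde R\tilde v\|$, gaining compactness through the factor $\varepsilon$ in the restriction operator exactly as in \cite{Carreau_Ang_Bonn_SG2};
\item the force term $f'$, which is fixed;
\item the inertial term, which by (\ref{inertial_alpha_dilatant}) is bounded by $C\varepsilon^\alpha$ uniformly in $W^{-1,C(r)}$.
\end{itemize}
The inertial contribution is thus uniformly small or compact and does not destroy the argument of \cite[Section 3.3]{Carreau_Ang_Bonn_SG2}. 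That argument, which relies on the compact embedding $L^{C(r)}\hookrightarrow W^{-1,C(r)}$ applied to the pressure gradient tested with $y_3$-independent functions, then gives (\ref{conv_pres_tilde2}).

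Finally, (\ref{conv_pres_hat2}) follows from the strong convergence of $\tilde P_\varepsilon$ together with the standard property of unfolding. If $\tilde P_\varepsilon\to\tilde P$ strongly, then $\hat P_\varepsilon-\widehat{\tilde P}\to0$, because unfolding is an isometry by Lemma \ref{estimates_relation}, and $\widehat{\tilde P}\to\tilde P$ strongly for a fixed function.
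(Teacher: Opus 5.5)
Your treatment of the velocities (both scalings, $\tilde u_3\equiv 0$, the divergence and mean-value relations) is fine. So is your passage from \eqref{conv_pres_tilde2} to \eqref{conv_pres_hat2}. Overall you follow the same route as the paper, which simply imports the Stokes-case compactness argument of \cite{Carreau_Ang_Bonn_SG2}.

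\textbf{The gap.} It lies in the only step that is new relative to that reference: the inertial term in the strong convergence of the pressure. Write $I_\varepsilon(\tilde v)=\int_{\widetilde\Omega_\varepsilon}(\tilde u_\varepsilon\cdot\nabla_\varepsilon)\tilde u_\varepsilon\cdot\tilde R^\varepsilon_q\tilde v$. You argue that, by \eqref{inertial_alpha_dilatant}, $|I_\varepsilon(\tilde v)|\le C\varepsilon^{\alpha}\|\tilde v\|_{W^{1,q}_0(\Omega)^3}$, so the inertial contribution is ``uniformly small or compact''. But \eqref{inertial_alpha_dilatant} only asserts $\alpha\ge 0$. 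In the critical case $\gamma=1$ the paper's own estimates give exactly $\alpha=0$: part b) of the proof of Proposition~\ref{Estimates_extended_lemma2} yields $C\varepsilon^{\theta-1}$ with $\theta=1$, and in case $b_1$ this is written as $C\|\tilde v\|_{W^{1,r}_0(\Omega)^3}$. A sequence that is merely bounded in $W^{-1,C(r)}(\Omega)^3$ need not be small or precompact. So the Ne\v{c}as step $\|\tilde P_\varepsilon-\tilde P\|_{L^{C(r)}}\le C\|\nabla(\tilde P_\varepsilon-\tilde P)\|_{W^{-1,C(r)}}\to 0$ is not justified. You would need $I_\varepsilon(\tilde v_\varepsilon)\to 0$ along every weakly null sequence $\tilde v_\varepsilon$ in $W^{1,q}_0(\Omega)^3$, and nothing you wrote gives that. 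For $\gamma<1$ your argument is fine, since there $\alpha>0$ and $I_\varepsilon\to0$ in norm. However, $\gamma=1$ is precisely the case that produces the nonlinear Darcy law.

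\textbf{How to repair it.} At $\gamma=1$ you must sharpen the inertial bound rather than quote it.
\begin{itemize}
\item For $r\ge 3$ this is straightforward. For example, $\varepsilon^{-1}\bigl|\int_{\widetilde\Omega_\varepsilon}\tilde u_{\varepsilon,3}\,\partial_{y_3}\tilde u_\varepsilon\cdot\tilde R^\varepsilon_r\tilde v\bigr|\le\|\varepsilon^{-1}\partial_{y_3}\tilde u_\varepsilon\|_{L^r}\|\tilde u_\varepsilon\|_{L^r}\|\tilde R^\varepsilon_r\tilde v\|_{L^{r/(r-2)}}\le C\varepsilon\|\tilde v\|_{W^{1,r}_0(\Omega)^3}$. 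This works because $r/(r-2)\le r$ and \eqref{ext_2} controls $\|\tilde R^\varepsilon_r\tilde v\|_{L^r}$ with no loss, whereas the paper uses the Sobolev bound, which costs $\varepsilon^{-1}$.
\item For $9/4<r<3$, interpolate $\tilde R^\varepsilon_q\tilde v$ between $L^q$ (no loss) and $L^{q^*}$ (loss $\varepsilon^{-1}$), and $\tilde u_\varepsilon$ between $L^r$ and $L^{r^*}$. This gives the positive power $\varepsilon^{4-9/r}$.
\item For $2<r\le 9/4$, the exponent $C(r)$ in \eqref{Cr_case_dilatant} is exactly the one at which these H\"older--Sobolev bounds are critical. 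There you either need a further idea or must settle for strong convergence in $L^s$ with $s<C(r)$.
\end{itemize}

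\textbf{The viscous term.} Your account of this term is an assertion rather than a derivation. In dilated form, \eqref{estim_restricted} gives $\varepsilon\|D_\varepsilon\tilde R^\varepsilon_q\tilde v\|_{L^q}\le C(\|\tilde v\|_{L^q}+\varepsilon\|D_{x'}\tilde v\|_{L^q}+\|\partial_{y_3}\tilde v\|_{L^q})$. The factor $\varepsilon$ therefore leaves $\|\partial_{y_3}\tilde v\|_{L^q}$ with no small coefficient, and does not by itself give compactness as in the non-thin Tartar argument. That part has to be imported from \cite{Carreau_Ang_Bonn_SG2}, as the paper does.
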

\begin{proof}Arguing as in \cite[Section 3.3]{Carreau_Ang_Bonn_SG2} for dilatant fluids, we obtain the proof of this lemma.\\
\end{proof}

\begin{proof}[{\bf Proof of Theorem \ref{mainthmDilatant}}]
We recall that in this case $r>2$. The proof will be divided in two steps. In the first step, we obtain the homogenized behavior in the case $\gamma<1$ and in the second step, we consider the case $\gamma=1$.\\

{\it Step 1.}  Consider $r>2$ and $\gamma<1$. Arguing as in {\it Step 1} of the proof of Theorem \ref{mainthmPseudo}, we have the variational inequality (\ref{v_ineq_carreau_12}). We pass to the limit in every term of  (\ref{v_ineq_carreau_12}) arguing as in the proof of Theorem 2.3 in \cite{Carreau_Ang_Bonn_SG2}.

{\it Step 2}. Consider $r>2$ and $\gamma=1$. 

Arguing as in the proof of Theorem 2.3 in \cite{Carreau_Ang_Bonn_SG2}, we deduce the nonlinear 2D Darcy's law of Carreau type (\ref{thm:system_gamma1}),  where the permeability function $\mathcal{U}:\R^2\to \R^2$ is defined by (\ref{permfunc123})  with $(w_{\xi'}, \pi_{\xi'})\in W^{1,r}_{0,\#}(Y_f)^3\times L^{r'}_{0,\#}(Y_f)$, for every $\xi'\in\R^2$, the unique solution of the local Stokes system (\ref{LocalProblemNonNewtonian})  with nonlinear viscosity given by the Carreau law (\ref{Carreau}).

\end{proof}

\section{Numerical simulations}\label{Sect:Numerics}

In this section, we propose a numerical method of Newton's type to solve the nonlinear Darcy's law~\eqref{thm:system_gamma1}, and provide some numerical results to show its practical applicability.

\subsection{Numerical method}\label{Sec:numerical_method}

 To fix the ideas, assume that $1<r<2$ (the case $r>2$ will be treated similarly). In that case, dropping the tilde symbol for simplicity, we seek a pressure $P\in H^1(\omega)\cap L^2_0(\omega)$ such that for every test function $\psi\in H^1(\omega)$,
\begin{equation}\label{WeakFormDarcy}
	\int_{\omega} \mathcal{U}(f'(x')-\nabla_{x'}P(x'))\cdot \nabla_{x'}\psi(x')\,  dx' = 0.
\end{equation}
Defining $\mathcal F(P)$ in the dual space of $H^1(\omega)$ by
\[
\forall \psi\in H^1(\omega)\quad  \langle \mathcal F(P),\psi \rangle = 	\int_{\omega} \mathcal{U}(f'(x')-\nabla_{x'}P(x'))\cdot \nabla_{x'}\psi(x')\,  dx',
\]
solving Darcy's law is equivalent to solving the equation $\mathcal F(P)=0$. Since the permeability function $\mathcal{U}$ is nonlinear, this equation cannot be solved directly. Hence, we rely on Newton's method to construct a sequence of approximated pressure fields $P_k$ that converges to the target pressure $P$ satisfying~\eqref{WeakFormDarcy}.

This approach requires differentiating the permeability function $\mathcal U$, which in turn requires differentiating the resolvent mapping $\xi'\in \R^2\mapsto \mathcal R(\xi'):=w_{\xi'}$ associated with the cell problem~\eqref{LocalProblemNonNewtonian}. For a fixed $\xi'\in \R^2$, the solution $w_{\xi'}$ to the local problem~\eqref{LocalProblemNonNewtonian} satisfies the following variational formulation: for every $\phi\in H^1_{0,\#}(Y_f)^3$ such that ${\rm div}_y\phi = 0$ in $Y_f$, 
\begin{equation*}
	\mu \int_{Y_f} \eta_r(\mathbb{D}_y[w_{\xi'}])\mathbb{D}_y[w_{\xi'}]:\mathbb{D}_y[\phi] = \int_{Y_f}\xi'\cdot \phi'.
\end{equation*}
Differentiating the previous relation with respect to $\xi'$, in the direction $\delta\xi'\in \R^2$, we obtain that $h:=\langle \mathcal R(\xi'), \delta\xi'\rangle$ belongs to $H^1_{0,\#}(Y_f)^3$, satisfies the condition ${\rm div}_y h = 0$ in $Y_f$ and the variational formulation: for every $\phi\in H^1_{0,\#}(Y_f)^3$ such that ${\rm div}_y\phi = 0$ in $Y_f$, 
\begin{equation}\label{VarFormCellDerivative}
		 \mu\int_{Y_f} \left\lbrace\eta_r(\mathbb{D}_y[w_{\xi'}])\mathbb{D}_y[h]:\mathbb{D}_y[\phi] + \langle \eta_r'(\mathbb{D}_y[w_{\xi'}]),\mathbb{D}_y[h]  \rangle \mathbb{D}_y[w_{\xi'}] \right\rbrace:\mathbb{D}_y[\phi]\\
		 = \int_{Y_f}\delta\xi'\cdot \phi'.
\end{equation}
In the above expression, $\langle \eta_r'(\mathbb{D}_y[w_{\xi'}]),\mathbb{D}_y[h]\rangle$ is the differential of the Carreau law~\eqref{Carreau} evaluated at the deformation rate $\mathbb{D}_y[w_{\xi'}]$, in the direction of $\mathbb{D}_y[h]$, and is given by
\[
\langle \eta_r'(\mathbb{D}_y[w_{\xi'}]),\mathbb{D}_y[h]\rangle = \lambda(r-2)(\eta_0-\eta_{\infty})(1+\lambda|\mathbb{D}_y[w_{\xi'}]|^2)^{\frac{r}{2}-2}\mathbb{D}_y[w_{\xi'}]:\mathbb{D}_y[h].
\]

By~\cite[Corollary 2.2]{Saramito}, the solution $h$ of the variational formulation~\eqref{VarFormCellDerivative} exists and is unique.  We deduce that the differential of $\mathcal U$ at $\xi'$ in the direction $\delta\xi'$ is given by
\begin{equation}\label{InitialDefDU}
	\langle D\mathcal U(\xi'),\delta\xi' \rangle = \int_{Y_f} h'. 
\end{equation} 
Moreover, for a fixed vector $\xi'$, $h'$ depends linearly on $\delta\xi'$, therefore it can be represented for any $\delta\xi'=(\delta\xi_1,\delta\xi_2)$ using the solutions $h_1,h_2$ of problem~\eqref{VarFormCellDerivative}, respectively obtained by setting $\delta\xi' = (1,0)$ and $\delta\xi' = (0,1)$, as
\begin{equation*}\label{Computeh}
h' = \delta\xi_1\, h_1' + \delta\xi_2\, h_2'.
\end{equation*}
As a result, writing $\delta\xi'$ as a column vector $\delta\xi'=(\delta\xi_1,\delta\xi_2)^T$, the differential $\langle D\mathcal U(\xi'),\delta\xi' \rangle $ defined by~\eqref{InitialDefDU} admits a matricial representation of the form
\begin{equation}\label{MatrixRepDU}
	\langle D\mathcal U(\xi'),\delta\xi' \rangle = A_{\xi'}\delta\xi'
\end{equation}
where for $j=1,2$, column $j$ of the $2\times 2$ matrix $A_{\xi'}$ is the integral over $Y_f$ of the vector $h_j$ introduced above, written as a column vector as well.

We can now express the differential of $\mathcal F$ at a pressure $P$, in the direction $\delta P$, as the linear form $\mathcal L:=\langle D\mathcal F(P),\delta P\rangle$ defined by
\begin{equation*}
	\forall \psi\in H^1(\omega)\quad \mathcal L(\psi)=-\int_{\omega}\langle D\mathcal U(f'(x')-\nabla_{x'}P(x')),\nabla_{x'}(\delta P)(x') \rangle \cdot \nabla_{x'}\psi. 
\end{equation*}
Newton's algorithm consists then in constructing a sequence $P_k\in H^1(\omega)\cap L^2_0(\omega)$ as follows: $P_0$ is given, and if $P_k$ has been computed at step $k$, it is updated by solving the equation
\begin{equation}\label{NewtonFcal}
\langle D\mathcal F(P_k),\delta P\rangle = -\mathcal F(P_k)
\end{equation}
of unknown $\delta P\in H^1(\omega)$, and by setting $P_{k+1}=P_k + \delta P - \frac{1}{|\omega|}\int_{\omega}(P_k + \delta P)$, where $|\omega|$ is the area of the domain $\omega$. Equation~\eqref{NewtonFcal} means that for every $\psi\in H^1(\omega)$,
\begin{equation}\label{NewtonFcalDetail}
\int_{\omega}\langle D\mathcal U(f'(x')-\nabla_{x'}P_k(x')),\nabla_{x'}(\delta P)(x') \rangle \cdot \nabla_{x'}\psi = \int_{\omega} \mathcal{U}(f'(x')-\nabla_{x'}P_k(x'))\cdot \nabla_{x'}\psi(x')\,  dx'.
\end{equation}

To solve equation~\eqref{NewtonFcalDetail}, we rely on a finite element method. We consider a triangular mesh of $\omega$ and for all $k$, we discretize the pressure $P_k$, the test function $\psi$ and the unknown $\delta P$ using $P^2$ elements. Therefore, $\nabla_{x'}P_k$ is affine on each triangle. Approximating the external force $f'$ by $P^1$ elements, we get that on $f'-\nabla_{x'}P_k$ is determined by its values on the vertices. As a result, we can approximate the right hand side of~\eqref{NewtonFcalDetail}, which requires to solve a finite number of cell problems~\eqref{LocalProblemNonNewtonian}, each one being associated with a different vector $\xi'$ corresponding to the value of $f'-\nabla_{x'}P_k$ at a given vertex. The resolution of the cell problems can be performed by Newton's method as well, see for instance~\cite[Section 2.10]{Saramito}. Concerning the left hand side, we argue similarly and use the matricial formula~\eqref{MatrixRepDU} to replace on every triangle the quantity $\langle D\mathcal U(f'(x')-\nabla_{x'}P_k(x')),\nabla_{x'}(\delta P)(x')\rangle$ by
\[
A_{f'-\nabla_{x'}P_k}\nabla_{x'}(\delta P)(x')
\]
where $\nabla_{x'}(\delta P)(x')$ is written in column. This allows us to obtain a manageable bilinear form for the left hand side, and we solve the corresponding weak formulation using the open source software FreeFem++~\cite{FREEFEM}.

\subsection{Numerical results}

In this section, we present some numerical results obtained by applying the method described in Section~\ref{Sec:numerical_method}. We consider two different geometries for the base $T'$ of the cylindrical inclusions:
\begin{itemize}
	\item a disk of radius $R=0.25$,
	\item an ellipse of semi-major axis $a=0.35$, aligned with the $y_1$ coordinate axis, and of semi-minor axis $b=R^2/a$.
\end{itemize}
Note that this choice of parameters guarantees that the volumes of both inclusions are identical.

\subsubsection{Resolution of the cell problem~\eqref{LocalProblemNonNewtonian}}\label{Sect:NumCellProblem}

As previously mentioned, the resolution of Equation~\eqref{NewtonFcalDetail} requires to solve a family of cell problems of the form~\eqref{LocalProblemNonNewtonian} where the vector $\xi'$ is given by the value of $f'-\nabla_{x'}P_k$ at each vertex of the mesh of the two-dimensional domain $\omega$. In practice, we construct a finite element solver for the cell problem. This solver takes the vector $\xi'$ as input and produces the vector $V'=\int_{Y_f}w'_{\xi'}$, where $w'_{\xi'}$ is an approximation of the solution of~\eqref{LocalProblemNonNewtonian} obtained by Newton's method. Since the right hand side of the cell problem is a vector in $\R^2\times \{0\}$, it is easy to see that the third component of $w_{\xi'}$ is identically null, and that the solution is symmetric with respect to the horizontal cross section $Y_f'\times\{1/2\}$ of the cell $Y_f'$. This allows us to restrict the problem to the lower half cell $Y_f'\times \{0,1/2\}$ and impose symmetric boundary conditions on $y_3=1/2$.

The corresponding meshes are represented in Fig.~\ref{Fig:MeshCellDisk} in the case of the disk, and in Fig.~\ref{Fig:MeshCellEllipse} in the case of the ellipse. Tables~\ref{Table:DiskCell} and~\ref{Table:EllipseCell} summarize the outputs associated with $\xi'=(1,0)$ and $\xi'=(\sqrt{2}/2,\sqrt{2}/2)$, for different values of the parameters $\lambda$ and $\mu$. In each case, we have fixed the other parameters to $r=1.3$, $\eta_{0}=1$ and $\eta_{\infty}=0$. This choice is motivated by the fact that, in practice, $\eta_{\infty}$ is generally negligible with respect to $\eta_0$ (see, for instance, \cite{Bird}).

We can observe that for $\lambda=1$, passing from $\mu=10$ to $\mu=1$ results roughly in a multiplication of $V'$ by a factor $10$. This is not the case when one passes from $\mu=1$ to $\mu=0.1$ anymore, which suggests that the nonlinearity of the cell problem with respect to $\mu$ becomes more important as $\mu$ is reduced. Taking a much larger value of $\lambda$, namely $\lambda=1000$, it is clear that the Carreau viscosity has a very nonlinear behaviour with respect to the strain rate. This results in large variations of the norm of $V'$ when $\mu$ is divided by a factor $10$. Finally, let us mention that, in the case of the disk, the vector $V'$ is stricly aligned with $\xi'$. This is not the case anymore for the ellipse: for $\xi'=(\sqrt{2}/2,\sqrt{2}/2)$, the cosine of the angle between $V'$ and $\xi'$ is approximately equal to 0.89. This is coherent with the fact that the geometry of the obstacle is no longer invariant with respect to a rotation of the vector $\xi'$ in the horizontal plane.

\begin{figure}[h!]
	\centering
	\begin{minipage}{0.4\textwidth}
		\centering
		\includegraphics[width=\textwidth]{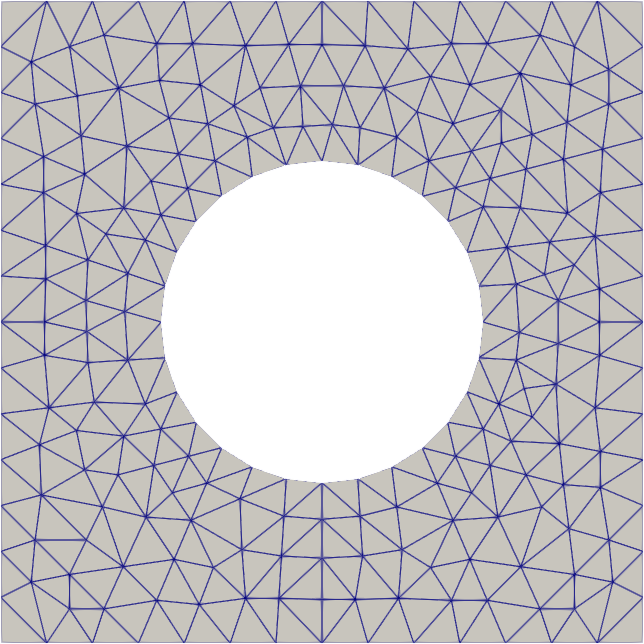}
	\end{minipage}
	\hspace{0.05\textwidth}
	\begin{minipage}{0.4\textwidth}
		\centering
		\includegraphics[width=\textwidth]{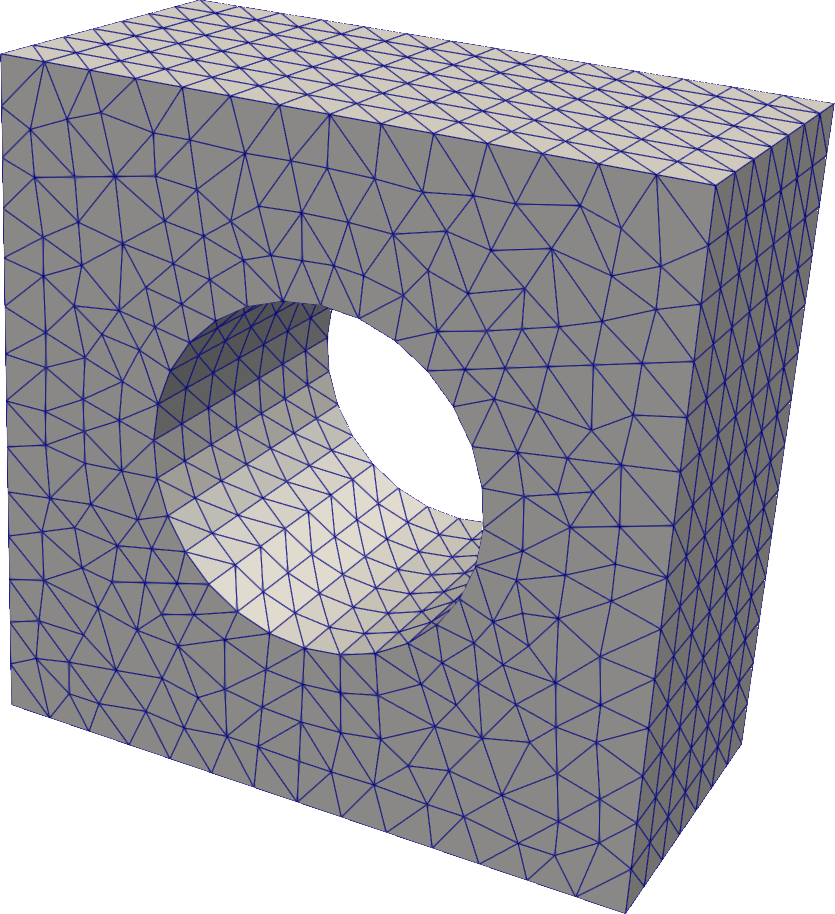}
	\end{minipage}
	\caption{Left: mesh of the 2D domain $Y_f'$ in the case of a circular-based cylindrical inclusion. Right: mesh of the lower half $Y_f'\times (0,1/2)$ of the 3D cell $Y_f$, obtained by extending the 2D mesh of $Y_f'$ in the $z$ direction using the FreeFem++ command \texttt{buildlayers}. The cell problem~\eqref{LocalProblemNonNewtonian} is solved in this half cell, using symmetric boundary condition on $y_3=1/2$. In this example, the 3D mesh contains $8736$ tetrahedra.}
	\label{Fig:MeshCellDisk}
\end{figure}


\begin{figure}[h!]
	\centering
	
	\begin{tabular}{c c}
		\begin{subfigure}{0.4\textwidth}
			\centering
			\includegraphics[width=\textwidth]{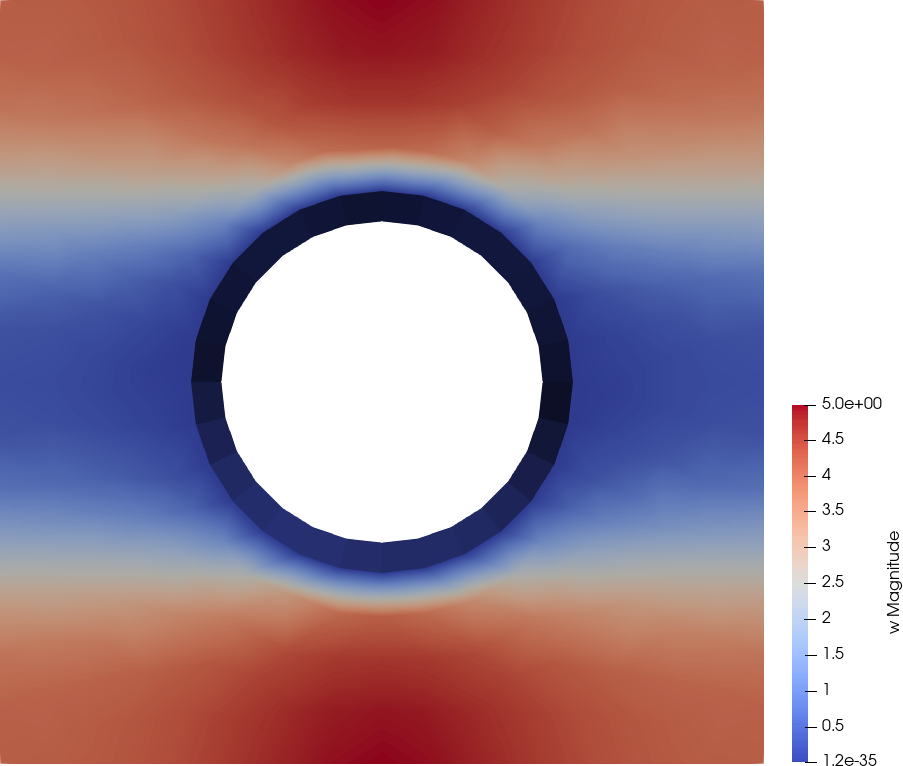}
		\end{subfigure} &
		\begin{subfigure}{0.4\textwidth}
			\centering
			\includegraphics[width=\textwidth]{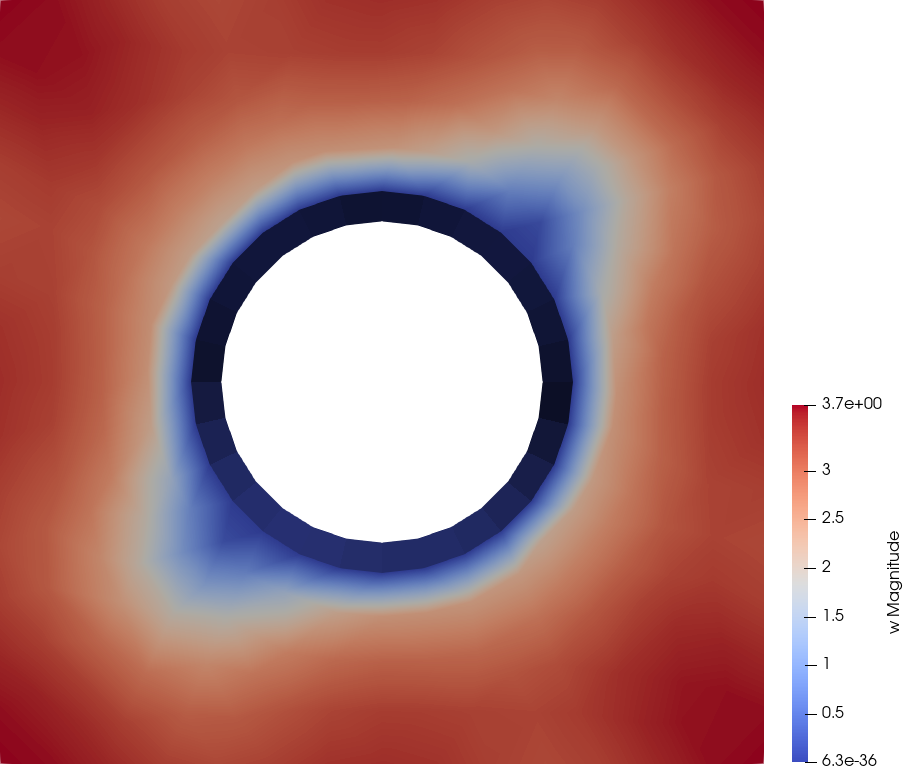}
		\end{subfigure}
		\\
		\begin{subfigure}{0.4\textwidth}
			\centering
			\includegraphics[width=\textwidth]{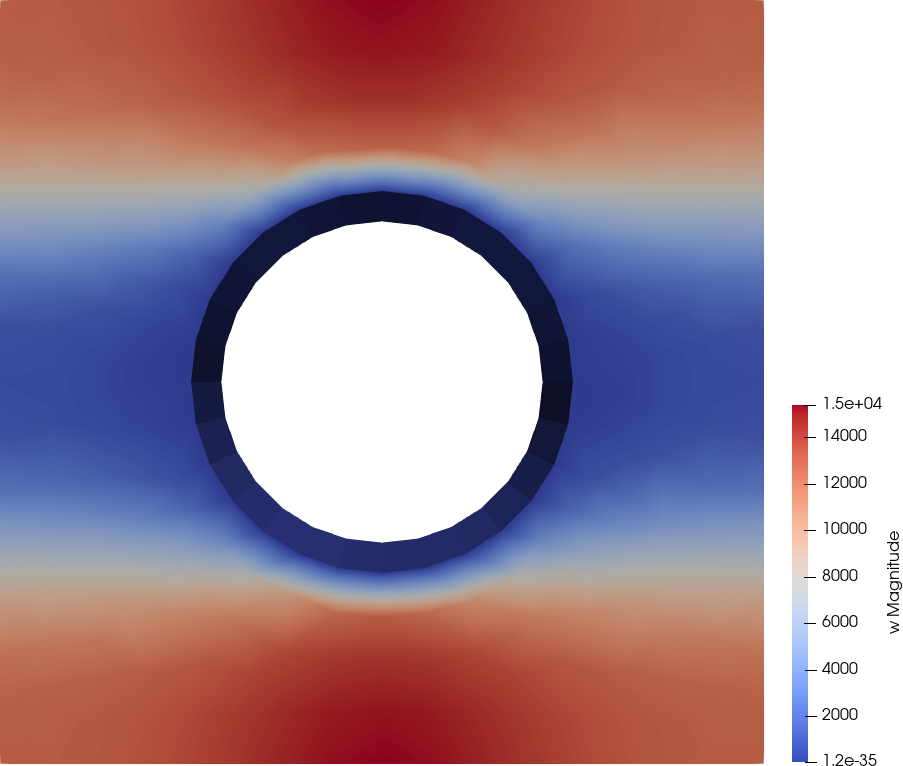}
			\caption*{$\xi'=(1,0)$}
		\end{subfigure} &
		\begin{subfigure}{0.4\textwidth}
			\centering
			\includegraphics[width=\textwidth]{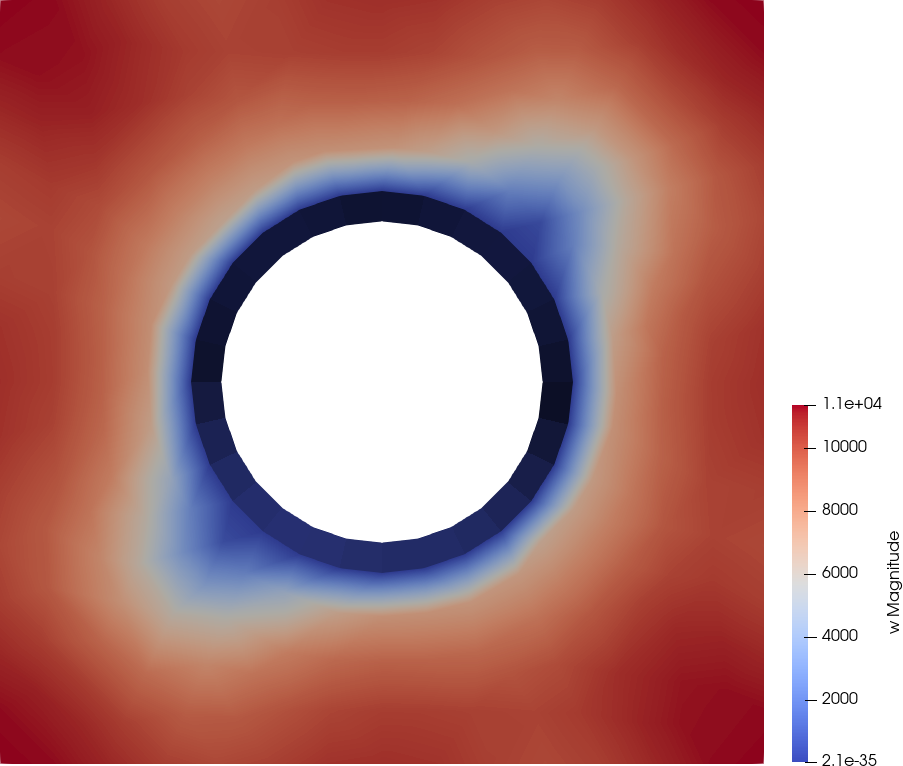}
			\caption*{$\xi'=(\sqrt{2}/2,\sqrt{2}/2)$}
		\end{subfigure}
	\end{tabular}
	
	\caption{Magnitude of the solution $w_{\xi'}$ to the cell problem~\eqref{LocalProblemNonNewtonian}, with $\mu=0.1$, in the horizontal cross section $Y_f'\times\{1/2\}$ of the cell $Y_f$, in the case of circular-based cylindrical inclusions. The parameters of the Carreau law~\eqref{Carreau} are $r=1.3$, $\eta_0=1$, $\eta_\infty=0$. First line : $\lambda=1$, second line : $\lambda=1000$. Left column : $\xi'=(1,0)$, right column : $\xi'=(\sqrt{2}/2,\sqrt{2}/2)$.}
\end{figure}


\begin{table}
	\centering
\begin{tabular}{|c|c| c||c|c|c|}
	\hline
	$\xi'$ &$\lambda$&$\mu$ & $V_1$ & $V_2$ & $\|V'\| $ \\ 
	\hline\hline
	 \multirow{6}{*}{$(1,0)$}& & 10  & 0.00251667 & -1.19492e-08 & 0.00251667\\
	  &1& 1  & 0.0258962 & -2.03842e-07 & 0.0258962 \\
	 && 0.1   &1.66673 & -0.000146684 & 1.66673 \\
	\cline{2-6}
			& & 10  & 0.00329309 & -2.07601e-07 & 0.00329309 \\
	 &1000& 1  & 2.41035 & -9.17069e-05 & 2.41035   \\
	&&  0.1   &  5192.89 & -0.196614 & 5192.89\\
	\hline\hline
	 \multirow{6}{*}{$(\sqrt{2}/2,\sqrt{2}/2)$} &  &10  & 0.00177941 & 0.00177946 & 0.0025165 \\
	   &1& 1  & 0.0181773 & 0.0181779 & 0.025707 \\
	   &  & 0.1 & 1.06628&  1.06742 & 1.50875 \\
	\cline{2-6}		& &  10  & 0.00222964& 0.00223&  0.00315344 \\
	   &1000& 1  &  1.5389 & 1.54094 & 2.17777 \\
	& &  0.1 &  3315.42 & 3319.82 & 4691.83 \\
	\hline
\end{tabular}
\caption{Examples of outputs of the cell problem solver, in the case of the disk, with $\xi'=(1,0)$ and $\xi'=(\sqrt{2}/2,\sqrt{2}/2)$ and different values of the parameters $\lambda, \mu$, the other parameters being set to $r=1.3, \eta_{0}=1$ and $\eta_{\infty}=0$.}\label{Table:DiskCell}
\end{table}


\begin{table}
	\centering
	\begin{tabular}{|c|c| c||c|c|c|}
		\hline
		$\xi'$ &$\lambda$&$\mu$ & $V_1$ & $V_2$ & $\|V'\| $ \\ 
		\hline\hline
		\multirow{6}{*}{$(1,0)$}&& 10  & 0.00355439 & -2.04336e-08 & 0.00355439 \\
		&1& 1   & 0.0365926 & -2.17653e-07 & 0.0365926 \\
		&&0.1 & 3.19015 & -0.000131535& 3.19015\\
		\cline{2-6}
		& & 10  & 0.00484256 & -6.00639e-08 & 0.00484256 \\
		&1000& 1  & 4.65322 & -0.000188363&  4.65322   \\
		&&  0.1   &  10025 & -0.405677 & 10025\\
		\hline\hline
		\multirow{6}{*}{$(\sqrt{2}/2,\sqrt{2}/2)$}&  &10  & 0.00251303 & 0.000763869 & 0.00262656 \\
		   &1& 1  & 0.0255712 & 0.0078233 & 0.0267412 \\
		&  & 0.1 & 1.34546& 0.453007& 1.41967\\
		\cline{2-6}		& &  10  & 0.00303631& 0.000973364& 0.00318851 \\
		&1000& 1  &  1.93222& 0.650889& 2.0389 \\
		& &  0.1 &  4162.78& 1402.28& 4392.63 \\
		\hline
	\end{tabular}
	\caption{Examples of outputs of the cell problem solver, in the case of the ellipse, with $\xi'=(1,0)$ and $\xi'=(\sqrt{2}/2,\sqrt{2}/2)$ and different values of the parameters $\lambda, \mu$, the other parameters being set to $r=1.3, \eta_{0}=1$ and $\eta_{\infty}=0$.}\label{Table:EllipseCell}
\end{table}

\begin{figure}[h!]
	\centering
	\begin{minipage}{0.4\textwidth}
		\centering
		\includegraphics[width=\textwidth]{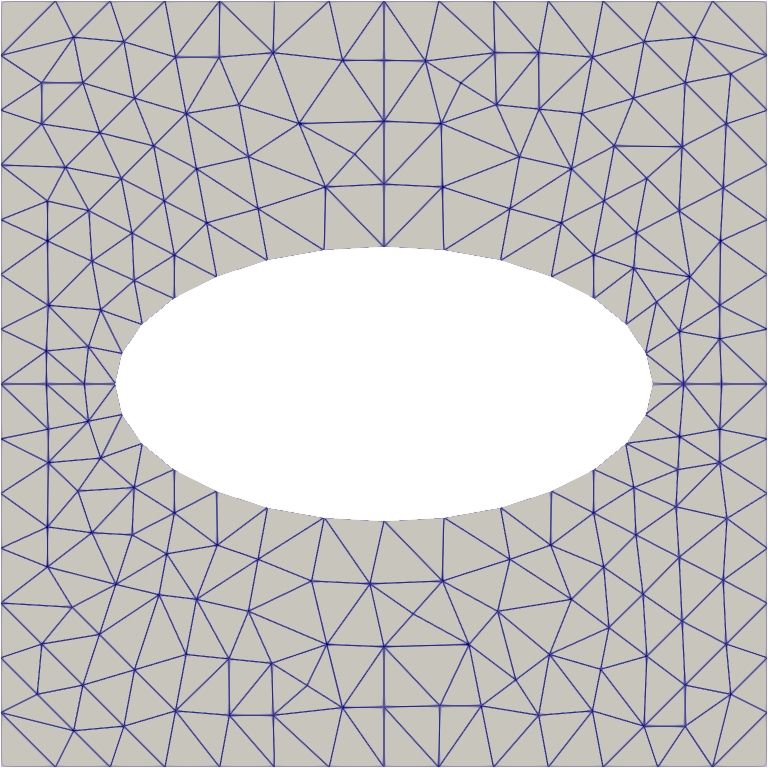}
	\end{minipage}
	\hspace{0.05\textwidth}
	\begin{minipage}{0.4\textwidth}
		\centering
		\includegraphics[width=\textwidth]{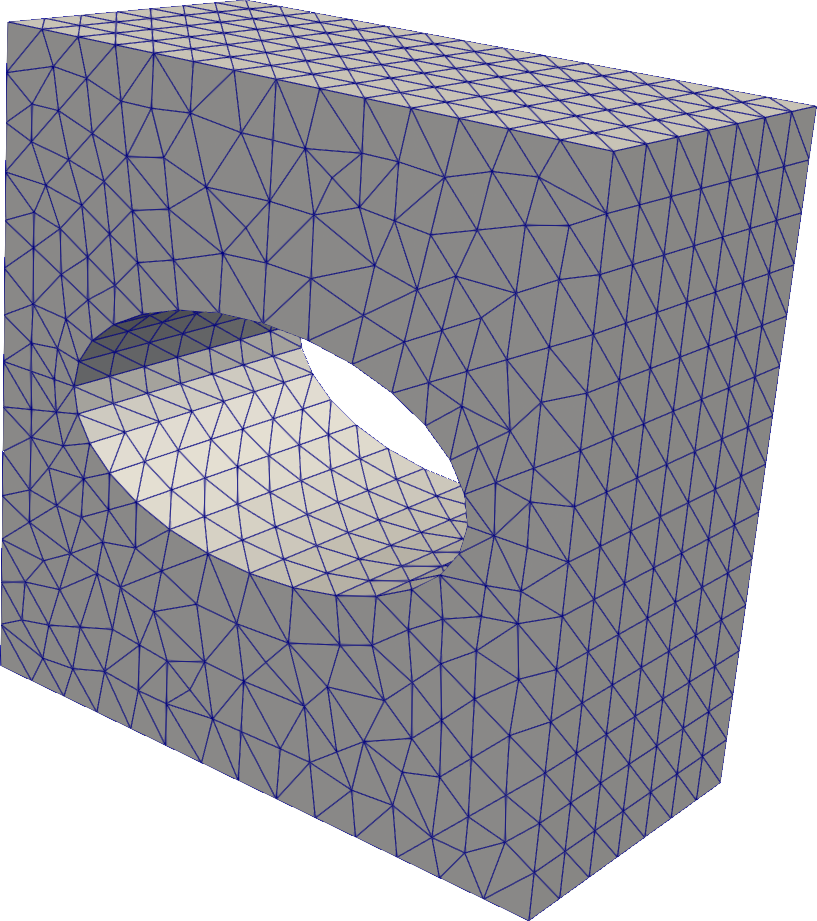}
	\end{minipage}
	\caption{Left: mesh of the 2D domain $Y_f'$ in the case of an elliptic-based cylindrical inclusion. Right: mesh of the lower half $Y_f'\times (0,1/2)$ of the 3D cell $Y_f$, obtained by extending the 2D mesh of $Y_f'$ in the $z$ direction using the FreeFem++ command \texttt{buildlayers}. The cell problem~\eqref{LocalProblemNonNewtonian} is solved in this half cell, using symmetric boundary condition on $y_3=1/2$. In this example, the 3D mesh contains $8106$ tetrahedra.}
	\label{Fig:MeshCellEllipse}
\end{figure}

\begin{figure}[h!]
	\centering
	
	\begin{tabular}{c c}
		\begin{subfigure}{0.4\textwidth}
			\centering
			\includegraphics[width=\textwidth]{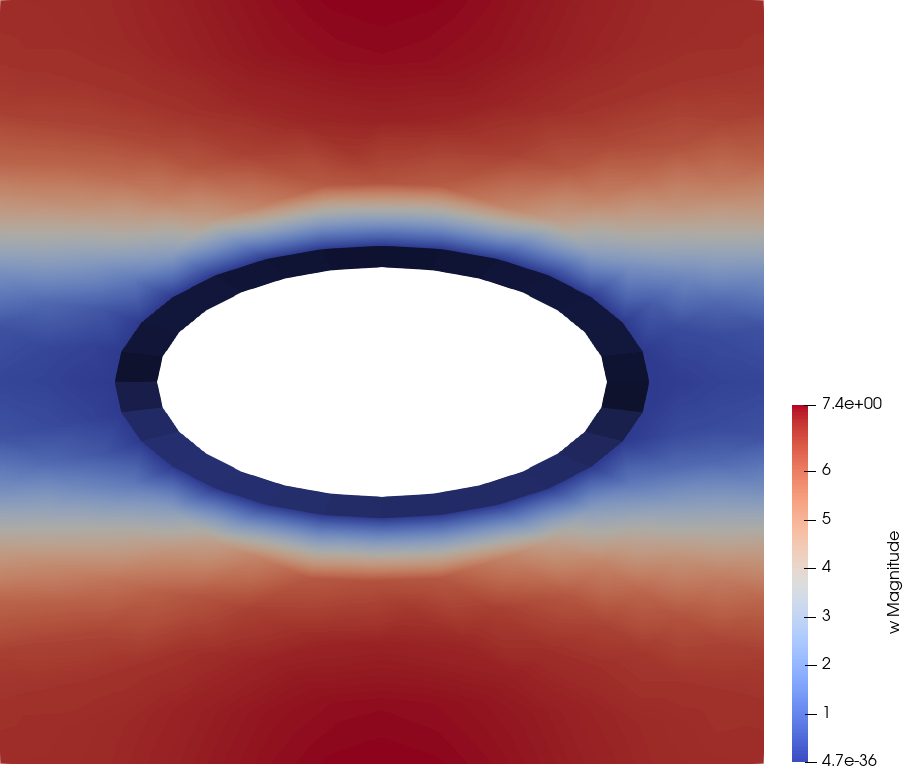}
		\end{subfigure} &
		\begin{subfigure}{0.4\textwidth}
			\centering
			\includegraphics[width=\textwidth]{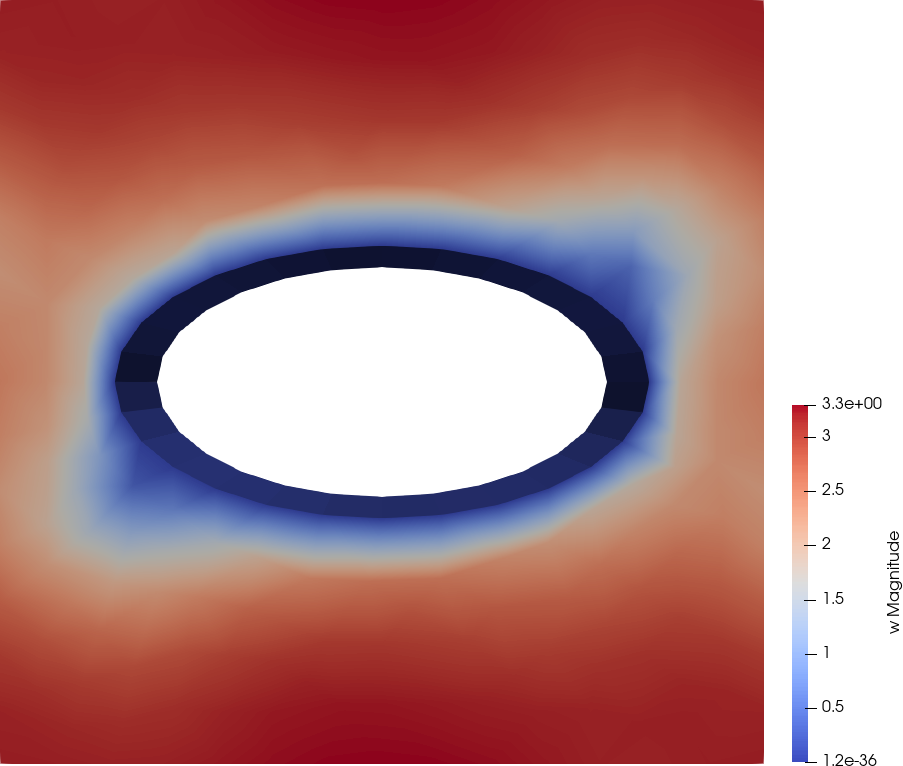}
		\end{subfigure}
		\\
		\begin{subfigure}{0.4\textwidth}
			\centering
			\includegraphics[width=\textwidth]{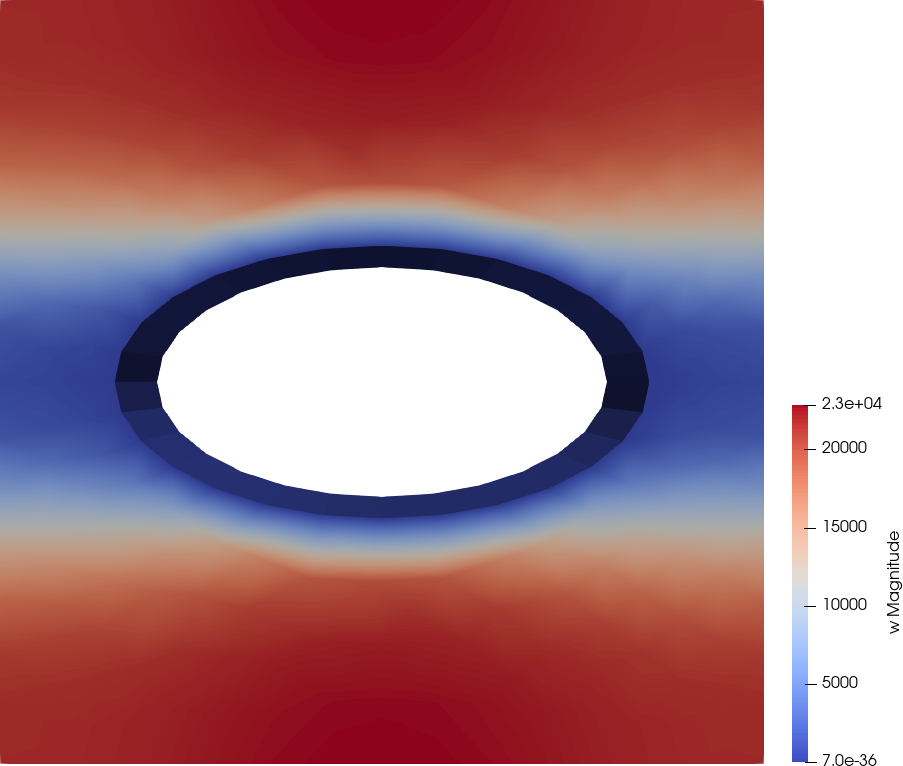}
			\caption*{$\xi'=(1,0)$}
		\end{subfigure} &
		\begin{subfigure}{0.4\textwidth}
			\centering
			\includegraphics[width=\textwidth]{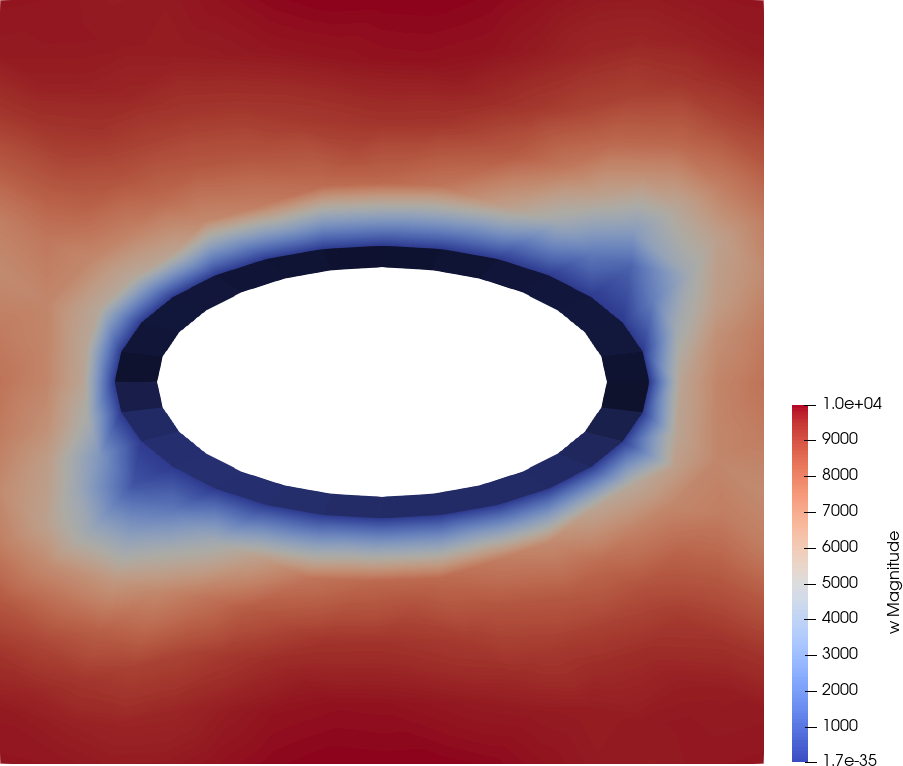}
			\caption*{$\xi'=(\sqrt{2}/2,\sqrt{2}/2)$}
		\end{subfigure}
	\end{tabular}
	
	\caption{Magnitude of the solution $w_{\xi'}$ to the cell problem~\eqref{LocalProblemNonNewtonian}, with $\mu=0.1$, in the horizontal cross section $Y_f'\times\{1/2\}$ of the cell $Y_f$, in the case of elliptic-based cylindrical inclusions. The parameters of the Carreau law~\eqref{Carreau} are $r=1.3$, $\eta_0=1$, $\eta_\infty=0$. First line : $\lambda=1$, second line : $\lambda=1000$. Left column : $\xi'=(1,0)$, right column : $\xi'=(\sqrt{2}/2,\sqrt{2}/2)$.}
\end{figure}

\subsubsection{Simulation of Darcy's law~\eqref{thm:system_gamma1}}

In this section, we simulate the limit system~\eqref{thm:system_gamma1} associated with the physical model~\eqref{2}, with a geometry inspired by the experimental setting from~\cite{Larsson}. We take $\omega=(0,1)\times(0,1/2)$ and a grid size $\vps=0.1$. We consider the case of circular-based and elliptic-based cylindrical inclusions and represent the associated domains $\omega_\vps$ and $\Omega_\vps$ in Fig.~\ref{Fig:ThinLayerDisk}. We consider the critical case $\gamma=1$, that is, that the Reynolds number $Re$ is $1/(\eps\mu)$. We take the forcing term $f'(x_1,x_2) = (x_2(0.5-x_2),0)$. The fixed parameters are: $r=1.3$, $\lambda=1$, $\eta_0=1$, $\etainf=0$.

We have represented in Figs.~\ref{Fig:DarcyNumDiskRe=1} and \ref{Fig:DarcyNumDiskRe=100} the numerical approximation of the solution of Darcy's law~\eqref{thm:system_gamma1}, with respectively $Re=1$ (which corresponds to $\mu=10$) and $Re=100$ ($\mu=0.1$). These simulations show that the pressure field is not affected by changing the Reynolds number, or more precisely, the value of parameter $\mu$ appearing in the cell problem~\eqref{LocalProblemNonNewtonian}. On the opposite, the norm of the filtration velocity $V'$ is multiplied by a factor $100$ when $\mu$ is divided by the same factor. A possible explanation is that for $f'-\nabla_{x'}P$ of order $10^{-2}$, the nonlinearity of the cell problem is not apparent for these values of $\mu$. As mentioned in Section~\ref{Sect:NumCellProblem}, in the case of circular-based inclusions, the vector $\mathcal U(\xi')$ is colinear to $\xi'$. This explains why $V'$ and $f'-\nabla_{x'}P$ seem proportional in Figs.~\ref{Fig:DarcyNumDiskRe=1} and~\ref{Fig:DarcyNumDiskRe=100}.

However, this is no longer the case if one considers an anisotropic situation, such as elliptic-based inclusions. In this case, Fig.~\ref{Fig:DarcyNumEllipseRe=1} shows noticeable differences in the distribution of the norms of $V'$ and $f'-\nabla_{x'}P$ on $\omega$. Since both vector fields are related by the relation $V'(x')=\mathcal U(f'(x')-\nabla_{x'}P(x'))$, this is linked with the fact that, as mentioned in Section~\ref{Sect:NumCellProblem}, for this shape of inclusion, the vector $\mathcal U(\xi')$ is not aligned with $\xi'$ in general.

\begin{figure}[h!]
	\centering
	\begin{minipage}{0.4\textwidth}
		\centering
		\includegraphics[width=\textwidth]{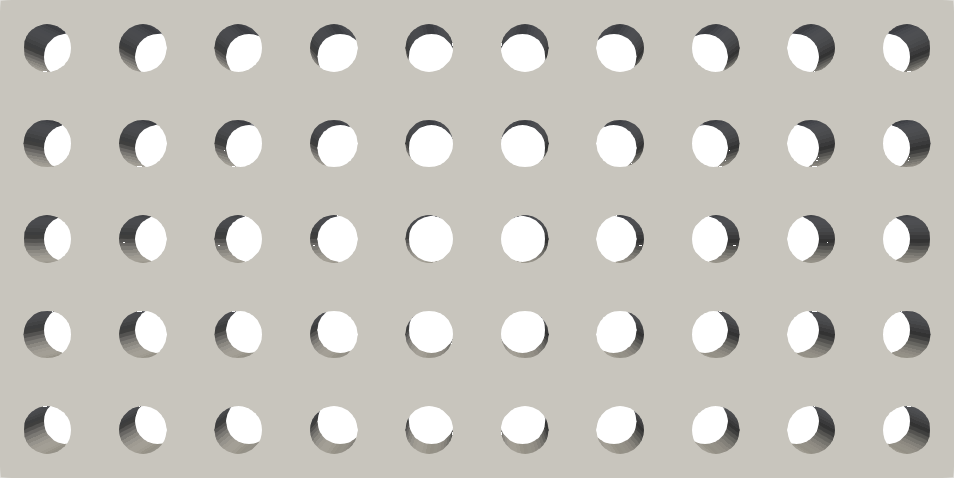}
	\end{minipage}
	\hspace{0.05\textwidth}
	\begin{minipage}{0.4\textwidth}
		\centering
		\includegraphics[width=\textwidth]{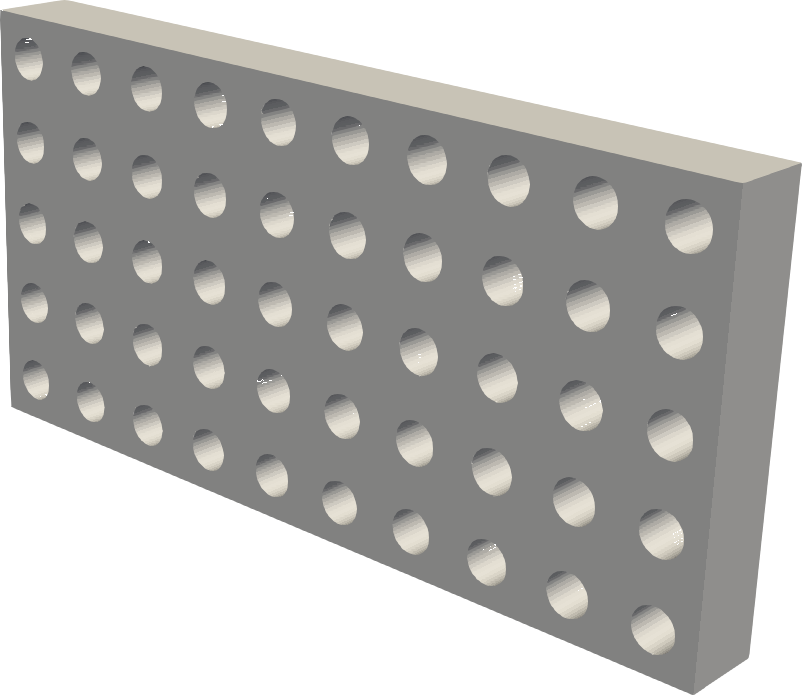}
	\end{minipage}
	\caption{Domain $\Omega_\eps$ corresponding to $\omega=(0,1)\times (0,1/2)$ and $\eps=0.1$, in the case of circular-based cylindrical inclusions.}
	\label{Fig:ThinLayerDisk}
\end{figure}

\begin{figure}[h!]
	\centering
	\begin{minipage}{0.4\textwidth}
		\centering
		\includegraphics[width=\textwidth]{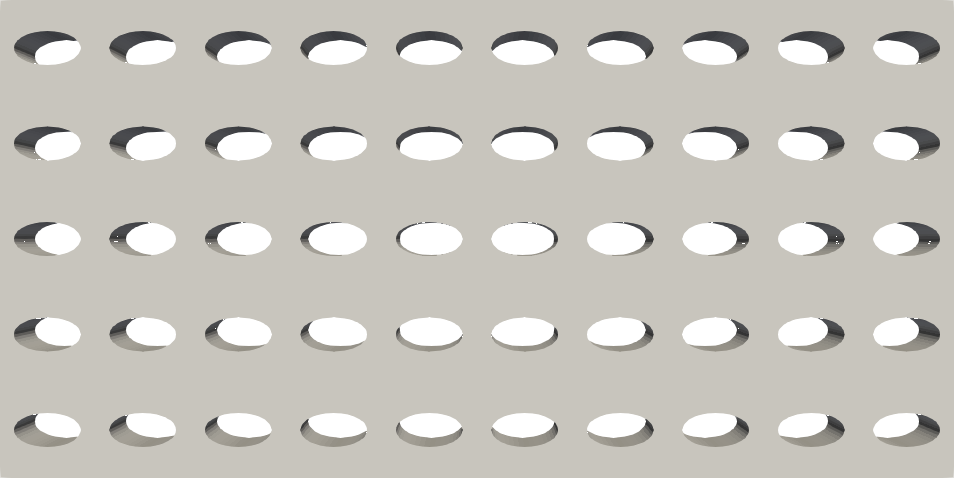}
	\end{minipage}
	\hspace{0.05\textwidth}
	\begin{minipage}{0.4\textwidth}
		\centering
		\includegraphics[width=\textwidth]{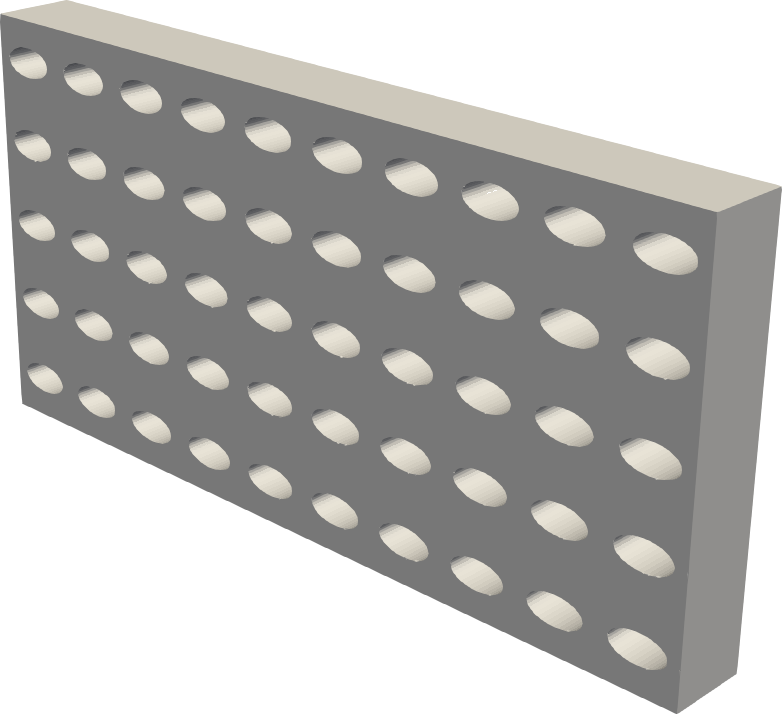}
	\end{minipage}
	\caption{Domain $\Omega_\eps$ corresponding to $\omega=(0,1)\times (0,1/2)$ and $\eps=0.1$, in the case of elliptic-based cylindrical inclusions.}
	\label{Fig:ThinLayerEllipse}
\end{figure}


\begin{figure}[htbp]
	\centering
	
	\begin{subfigure}{0.45\textwidth}
		\centering
		\includegraphics[width=\linewidth]{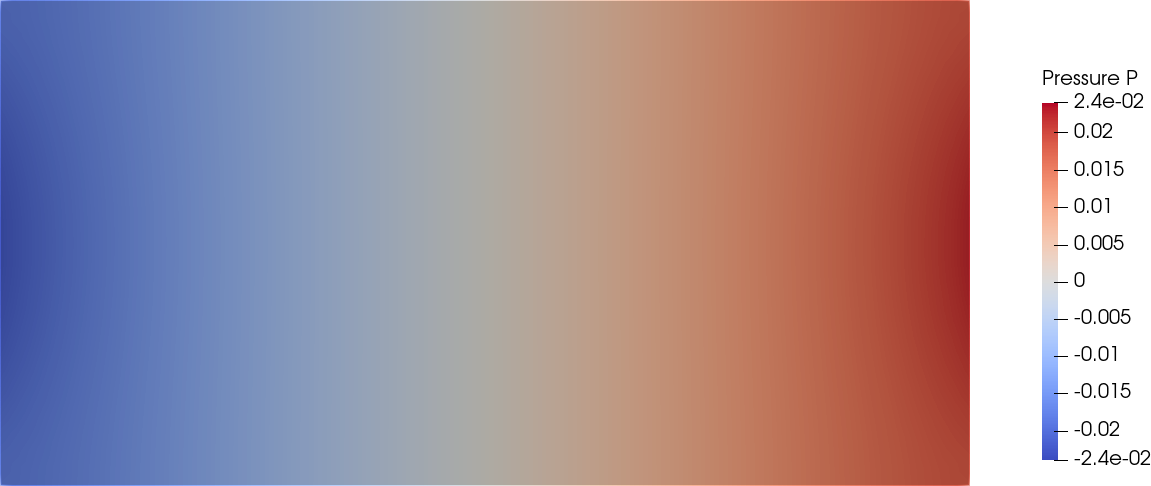}
		\caption*{$P$}
	\end{subfigure}\hfill	
		\begin{subfigure}{0.45\textwidth}
		\centering
		\includegraphics[width=\linewidth]{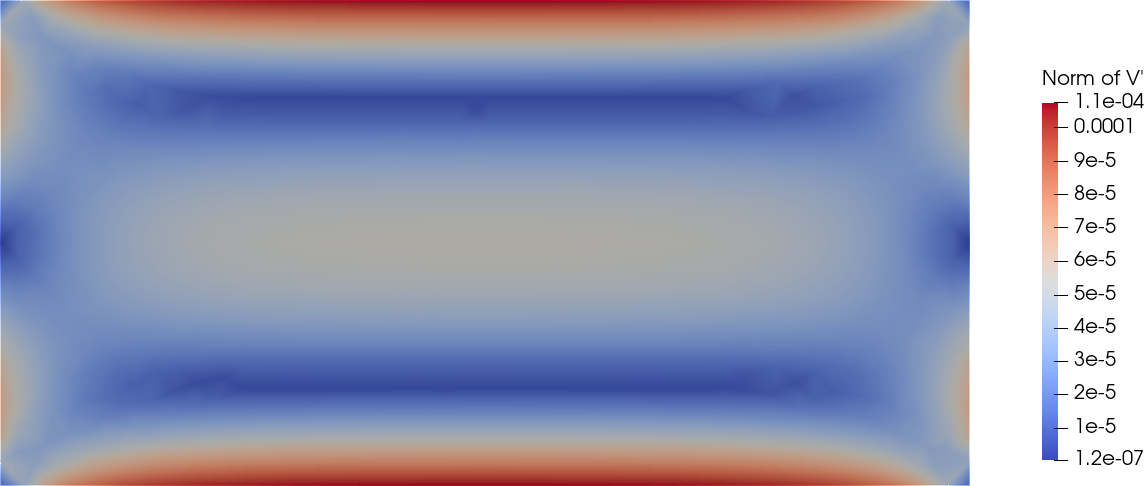}
		\caption*{$\|V'\|$}
	\end{subfigure}\hfill

	\vspace{0.5cm}

	\begin{subfigure}{0.45\textwidth}
	\centering
	\includegraphics[width=\linewidth]{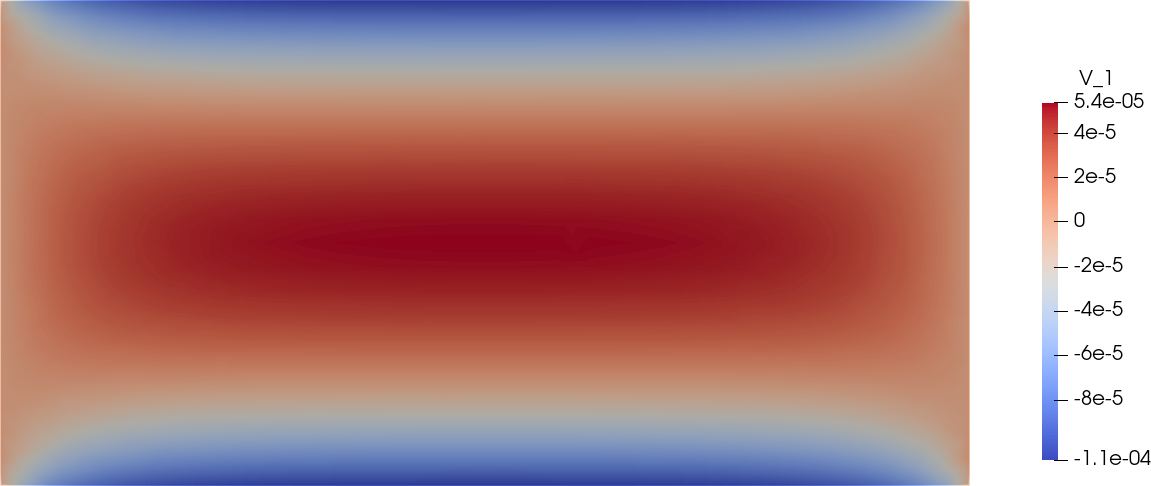}
	\caption*{$V_1$}
\end{subfigure}\hfill
\begin{subfigure}{0.45\textwidth}
	\centering
	\includegraphics[width=\linewidth]{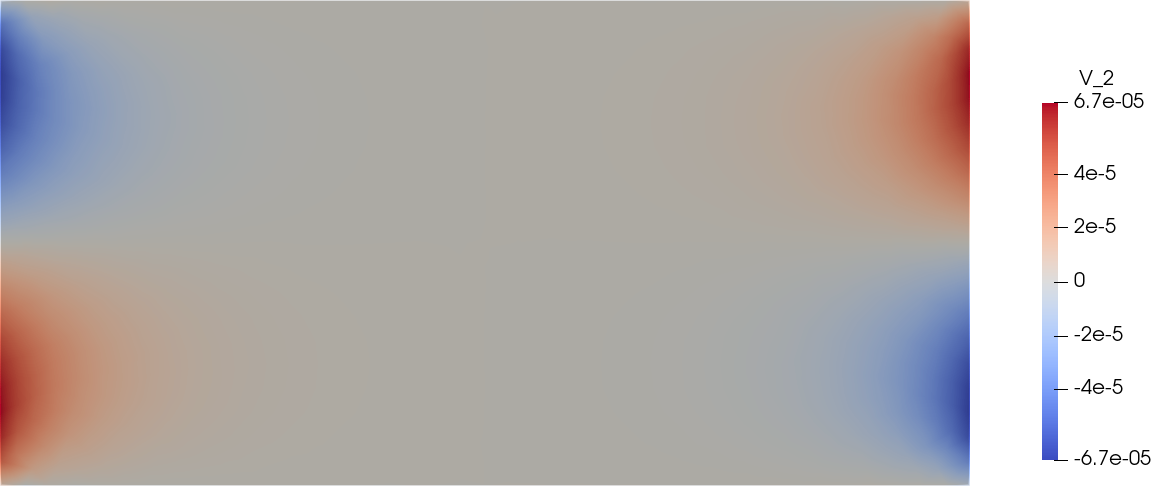}
	\caption*{$V_2$}
\end{subfigure}

		\vspace{0.5cm}
	
	\begin{subfigure}{0.45\textwidth}
		\centering
		\includegraphics[width=\linewidth]{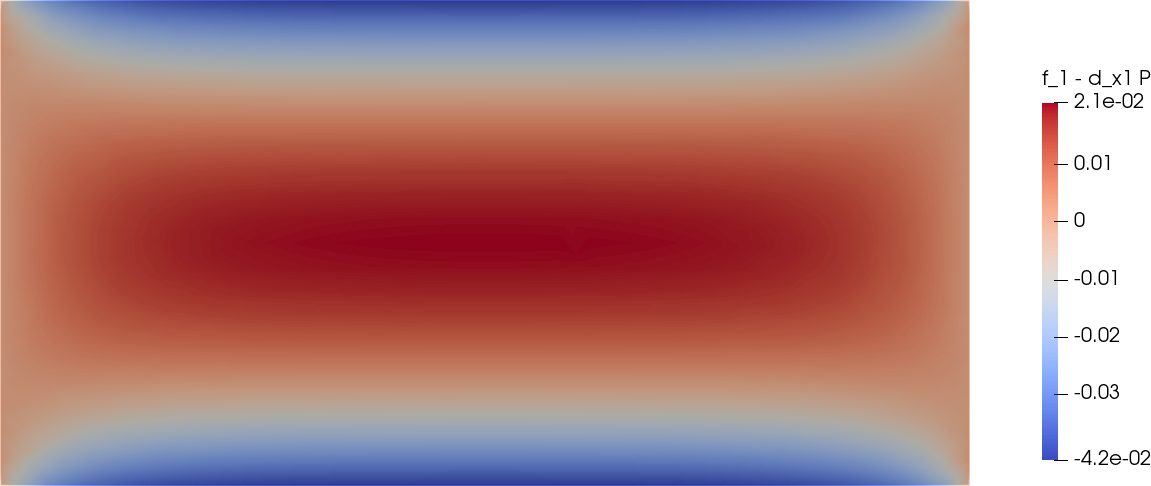}
		\caption*{$f_1-\partial_{x_1}P$}
	\end{subfigure}\hfill
	\begin{subfigure}{0.45\textwidth}
		\centering
		\includegraphics[width=\linewidth]{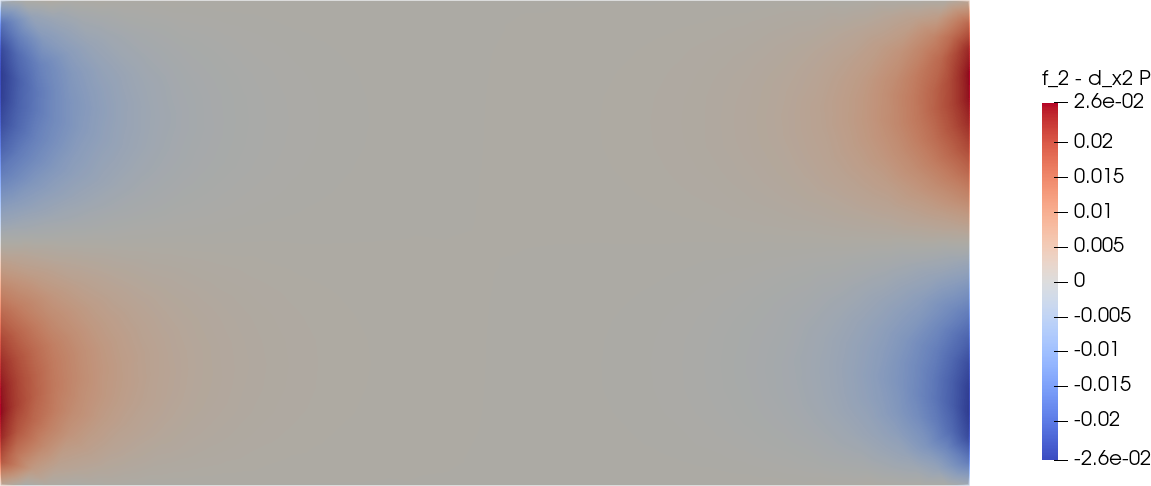}
		\caption*{$f_2-\partial_{x_2}P$}
	\end{subfigure}
	
		\vspace{0.5cm}
	
	\begin{subfigure}{0.45\textwidth}
		\centering
		\includegraphics[width=\linewidth]{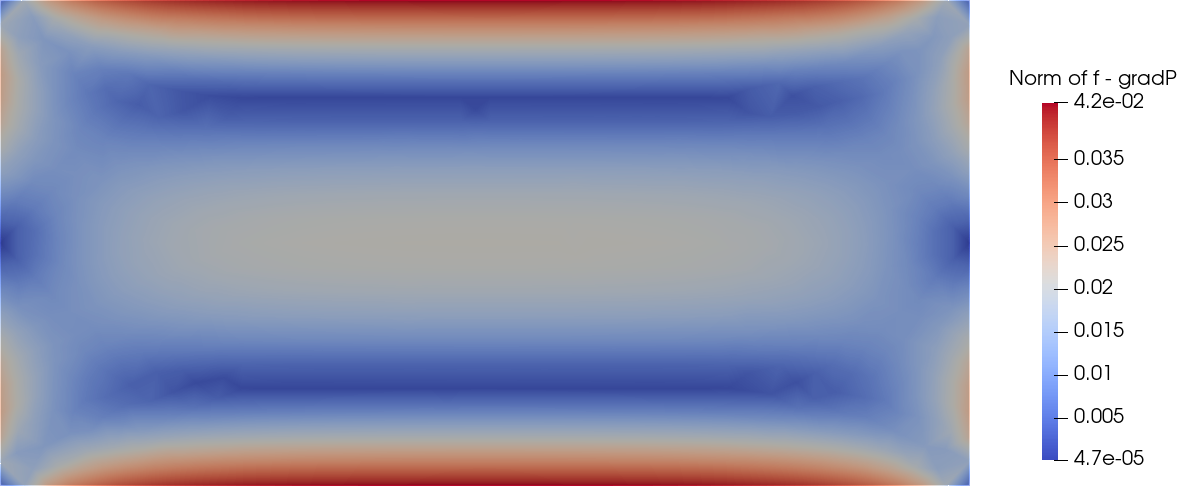}
		\caption*{$\|f'-\nabla_{x'}P\|$}
	\end{subfigure}

	\caption{ Numerical solution of Darcy's law~\eqref{thm:system_gamma1}: pressure $P$, norm, horizontal and vertical components of the filtration velocity $V'$, and horizontal, vertical components, and norm of $f'-\nabla P$. Case of circular-based inclusions and Reynolds number equal to $1$.}
	\label{Fig:DarcyNumDiskRe=1}
\end{figure}


\begin{figure}[htbp]
	\centering
	
	\begin{subfigure}{0.45\textwidth}
		\centering
		\includegraphics[width=\linewidth]{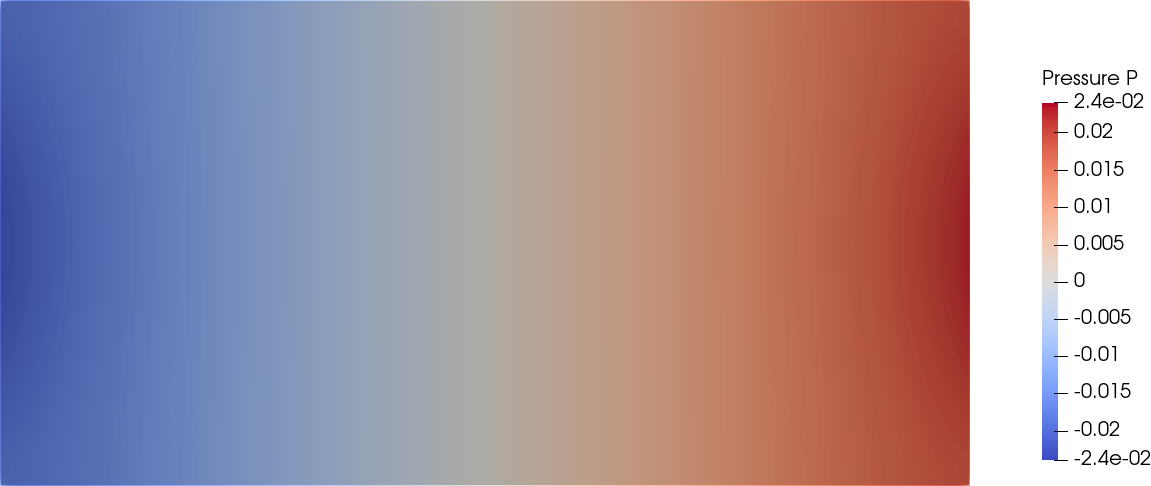}
		\caption*{$P$}
	\end{subfigure}\hfill	
	\begin{subfigure}{0.45\textwidth}
		\centering
		\includegraphics[width=\linewidth]{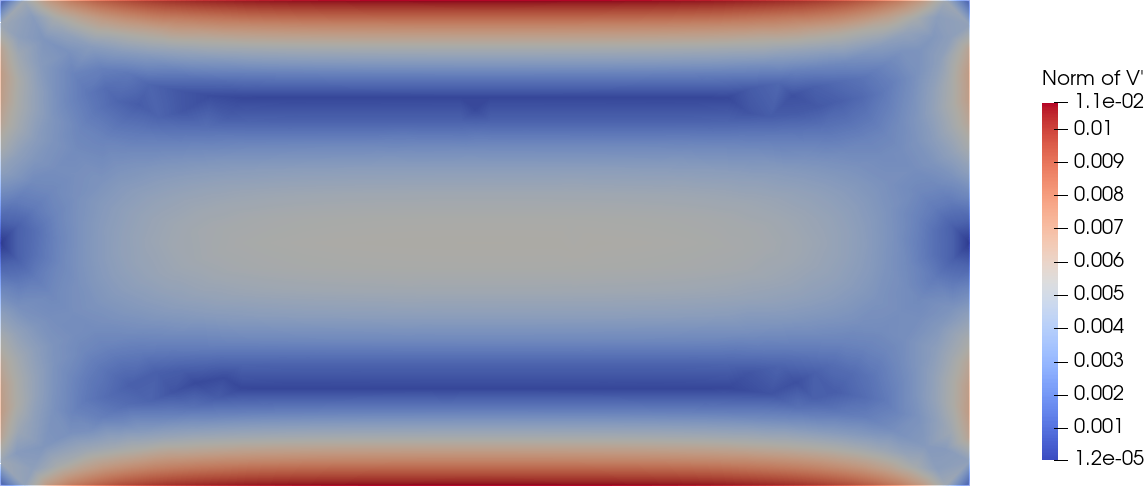}
		\caption*{$\|V'\|$}
	\end{subfigure}\hfill

	\vspace{0.5cm}

	\begin{subfigure}{0.45\textwidth}
		\centering
		\includegraphics[width=\linewidth]{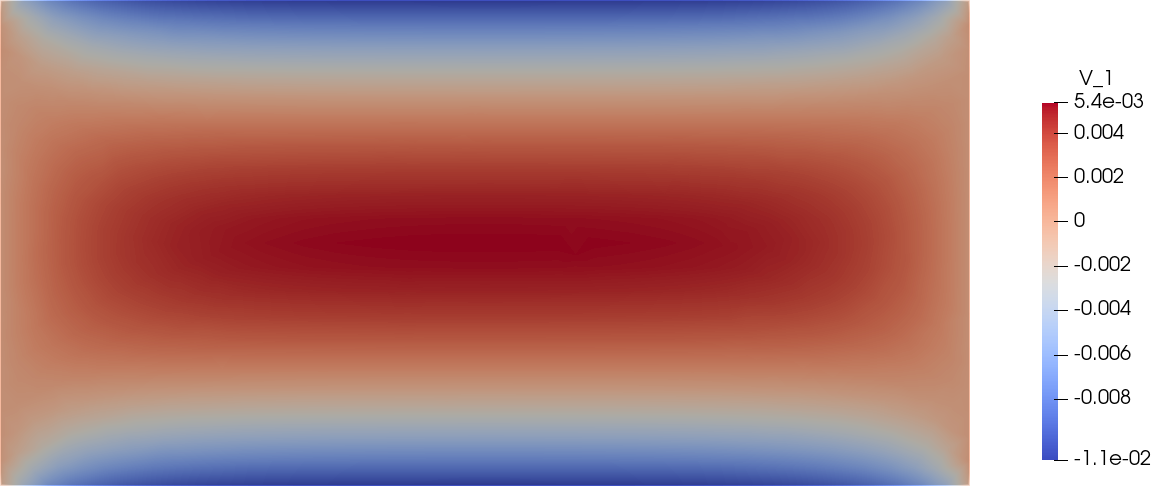}
		\caption*{$V_1$}
	\end{subfigure}\hfill
	\begin{subfigure}{0.45\textwidth}
		\centering
		\includegraphics[width=\linewidth]{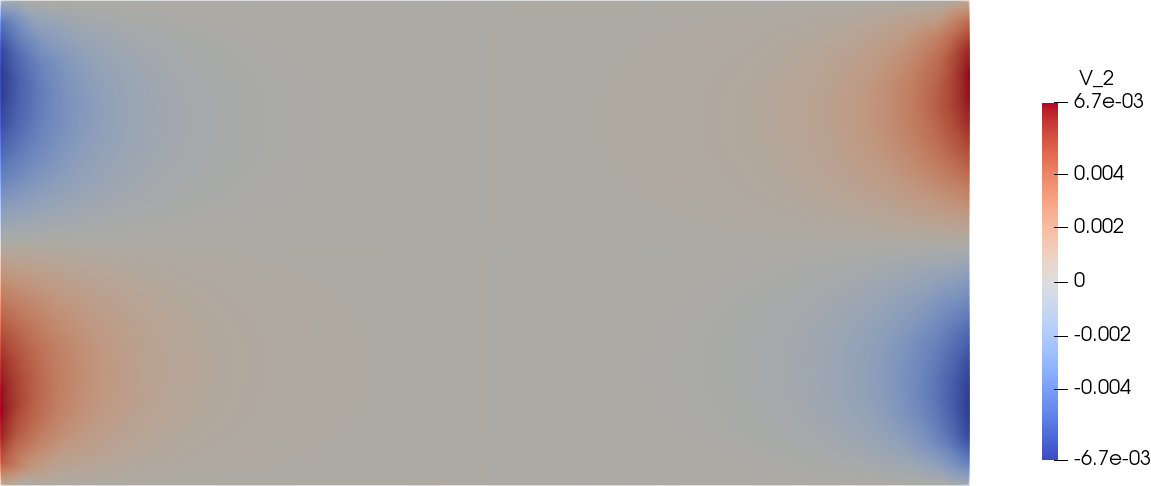}
		\caption*{$V_2$}
	\end{subfigure}
	
	\vspace{0.5cm}
	
	\begin{subfigure}{0.45\textwidth}
		\centering
		\includegraphics[width=\linewidth]{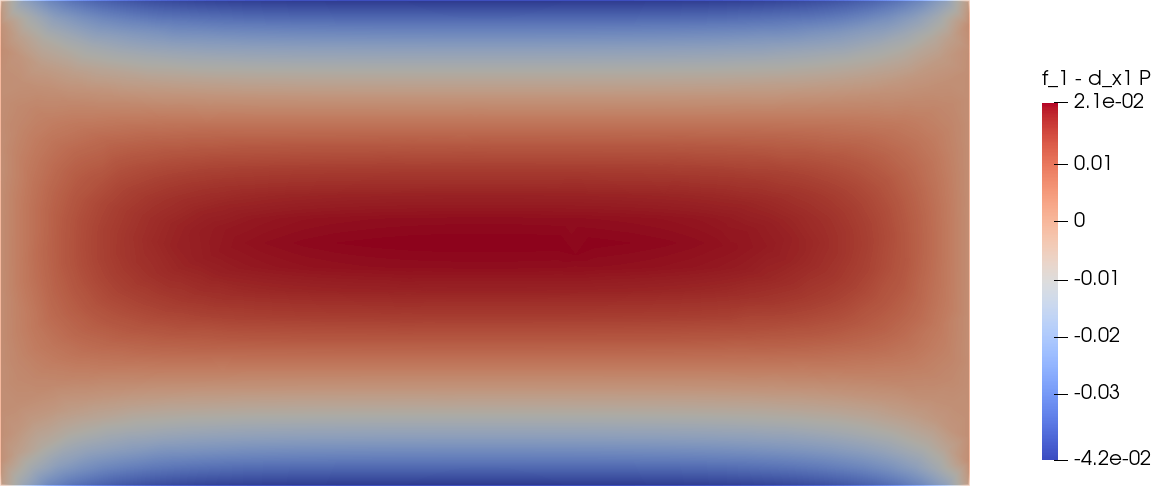}
		\caption*{$f_1-\partial_{x_1}P$}
	\end{subfigure}\hfill
	\begin{subfigure}{0.45\textwidth}
		\centering
		\includegraphics[width=\linewidth]{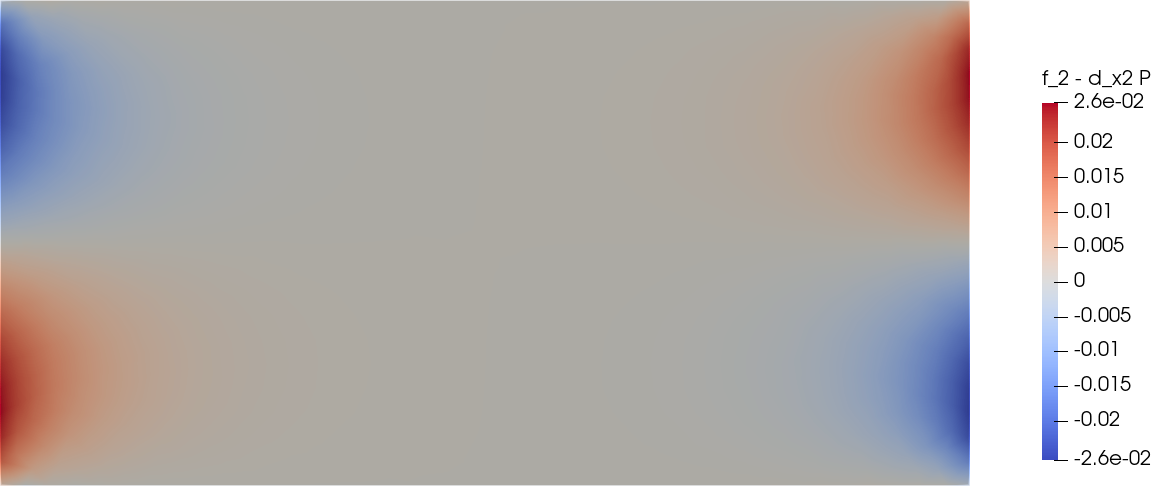}
		\caption*{$f_2-\partial_{x_2}P$}
	\end{subfigure}
	
	\vspace{0.5cm}
	
	\begin{subfigure}{0.45\textwidth}
		\centering
		\includegraphics[width=\linewidth]{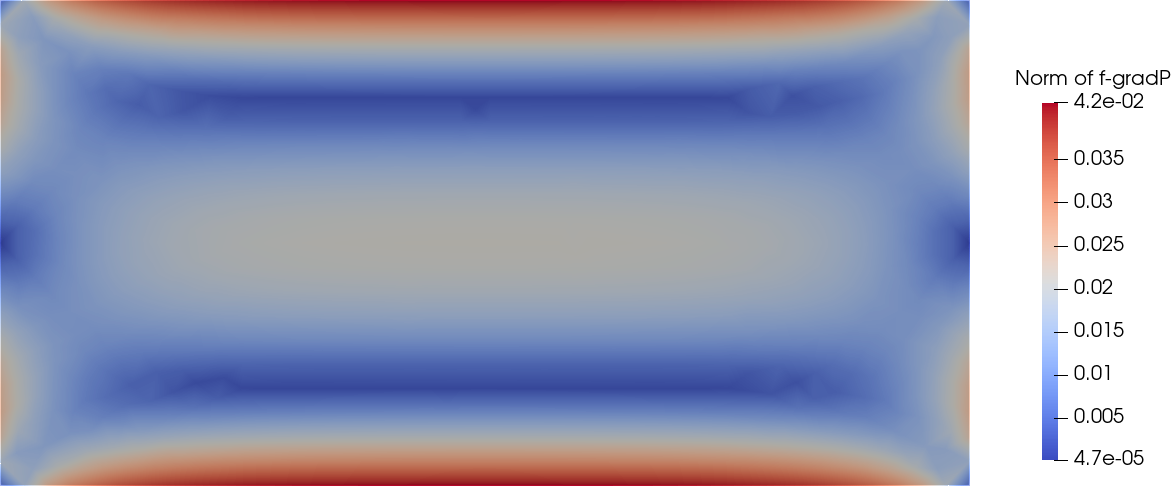}
		\caption*{$\|f'-\nabla_{x'}P\|$}
	\end{subfigure}

	\caption{Numerical solution of Darcy's law~\eqref{thm:system_gamma1}: pressure $P$, norm, horizontal and vertical components of the filtration velocity $V'$, and horizontal, vertical components, and norm of $f'-\nabla P$. Case of circular-based inclusions and Reynolds number equal to $100$.}
	\label{Fig:DarcyNumDiskRe=100}
\end{figure}


\begin{figure}[htbp]
	\centering
	
	\begin{subfigure}{0.45\textwidth}
		\centering
		\includegraphics[width=\linewidth]{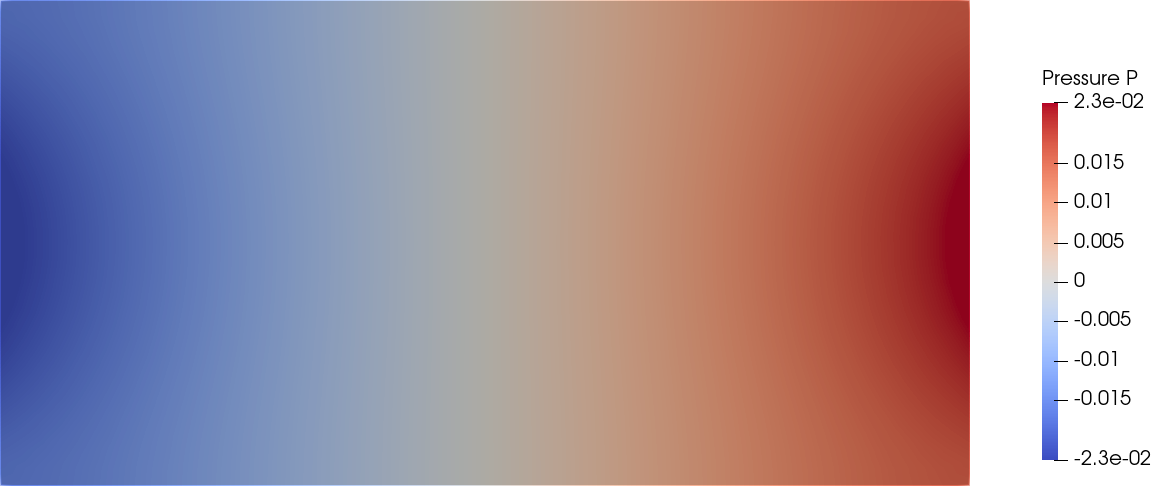}
		\caption*{$P$}
	\end{subfigure}\hfill	
	\begin{subfigure}{0.45\textwidth}
		\centering
		\includegraphics[width=\linewidth]{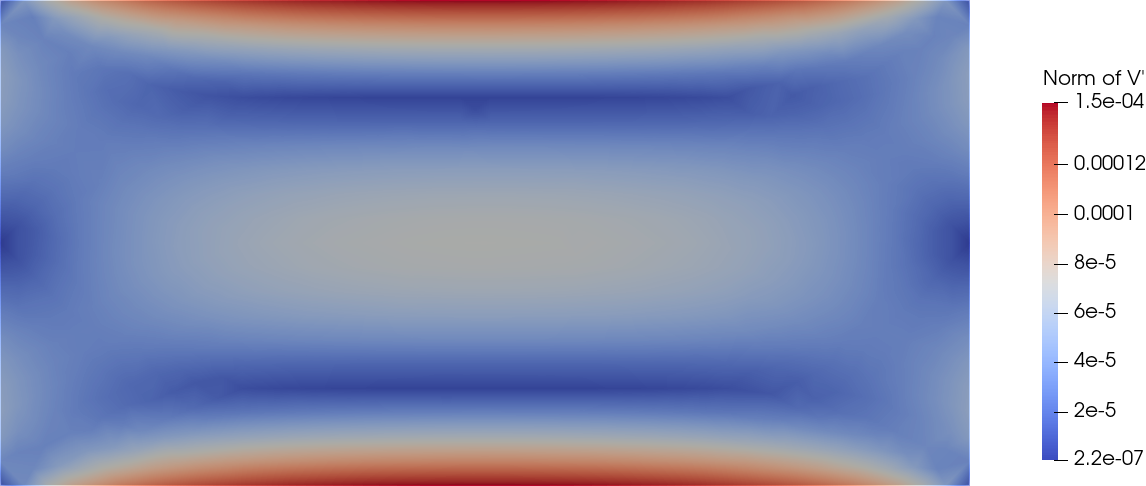}
		\caption*{$\|V'\|$}
	\end{subfigure}\hfill

	\vspace{0.5cm}

	\begin{subfigure}{0.45\textwidth}
		\centering
		\includegraphics[width=\linewidth]{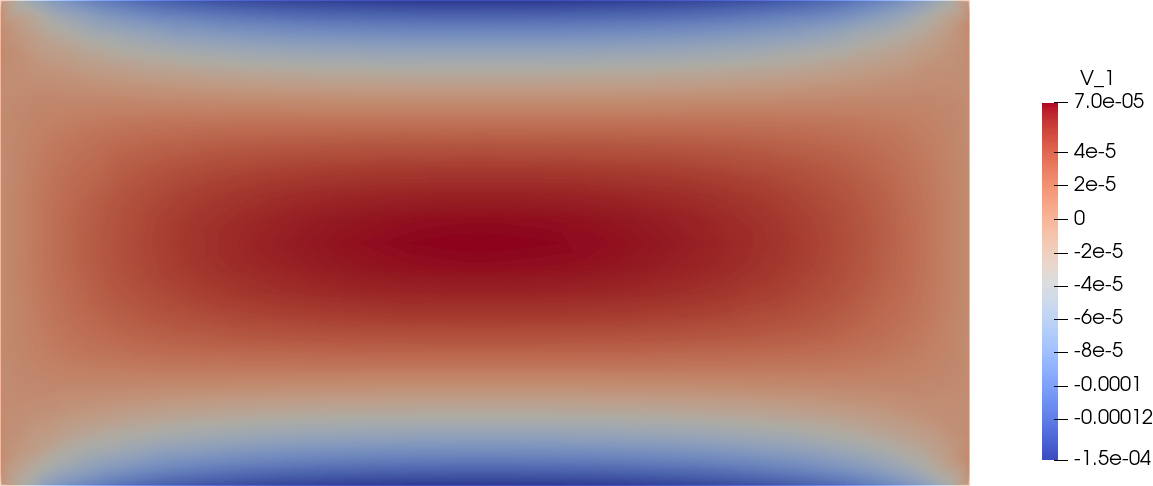}
		\caption*{$V_1$}
	\end{subfigure}\hfill
	\begin{subfigure}{0.45\textwidth}
		\centering
		\includegraphics[width=\linewidth]{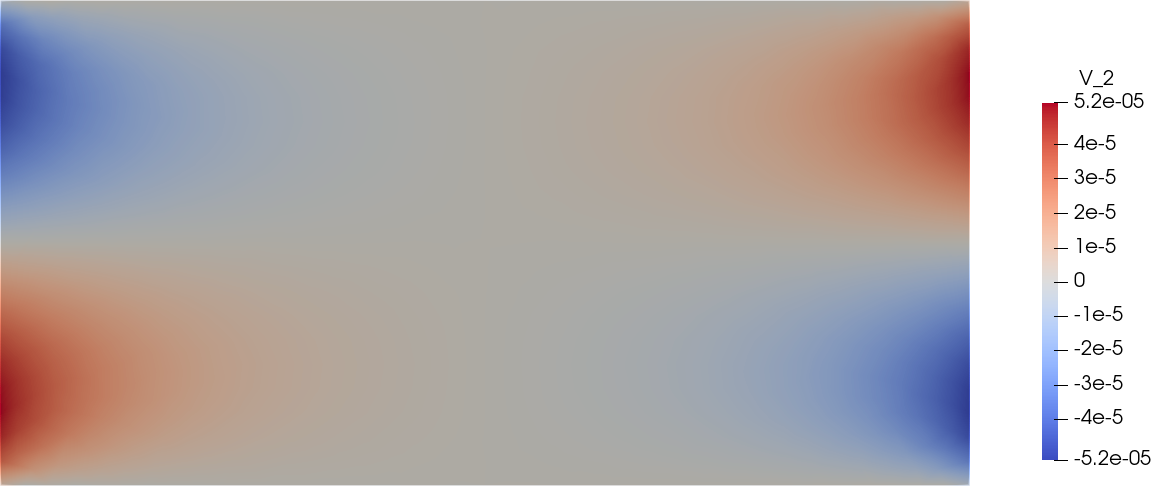}
		\caption*{$V_2$}
	\end{subfigure}
	
	\vspace{0.5cm}
	
	\begin{subfigure}{0.45\textwidth}
		\centering
		\includegraphics[width=\linewidth]{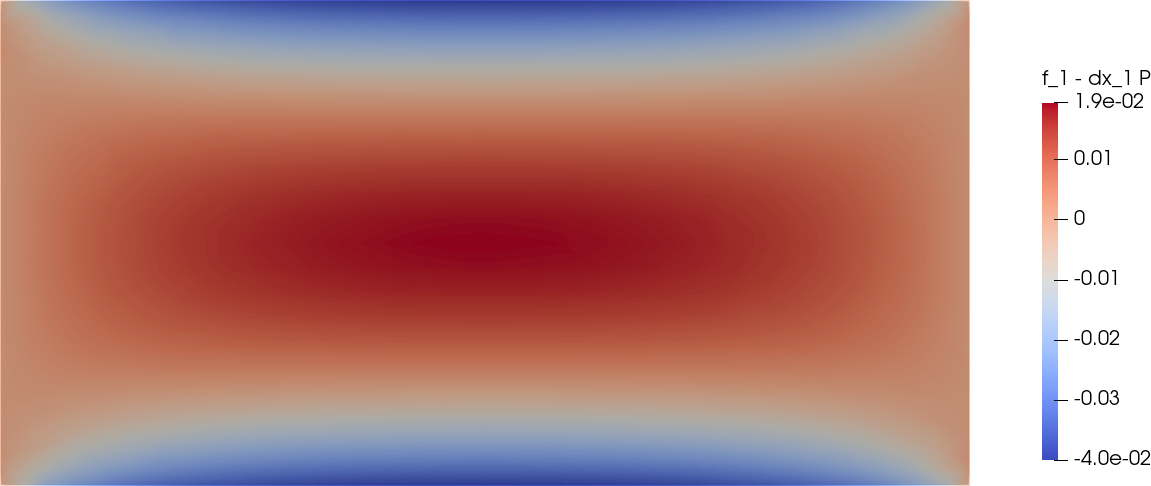}
		\caption*{$f_1-\partial_{x_1}P$}
	\end{subfigure}\hfill
	\begin{subfigure}{0.45\textwidth}
		\centering
		\includegraphics[width=\linewidth]{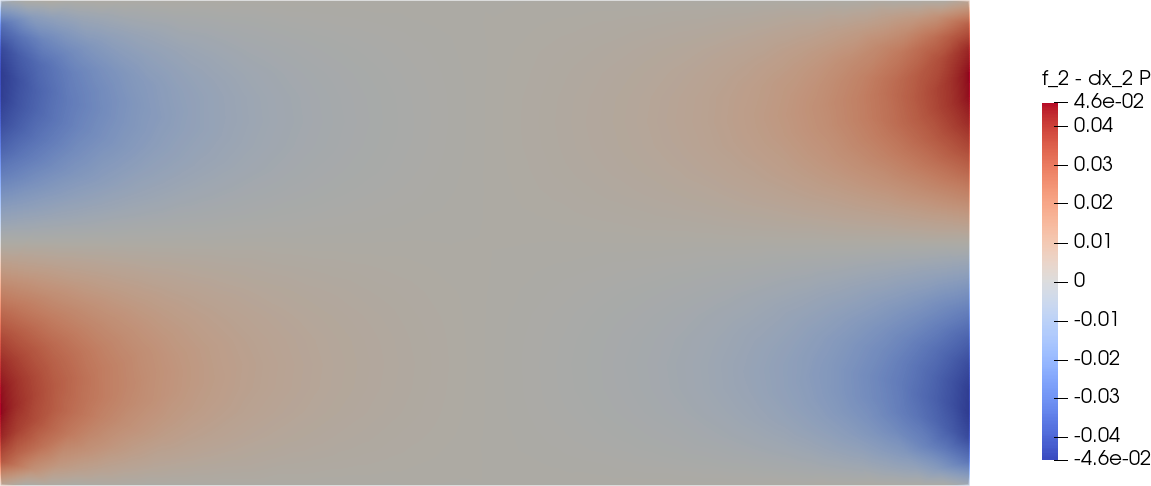}
		\caption*{$f_2-\partial_{x_2}P$}
	\end{subfigure}
	
	\vspace{0.5cm}
	
	\begin{subfigure}{0.45\textwidth}
		\centering
		\includegraphics[width=\linewidth]{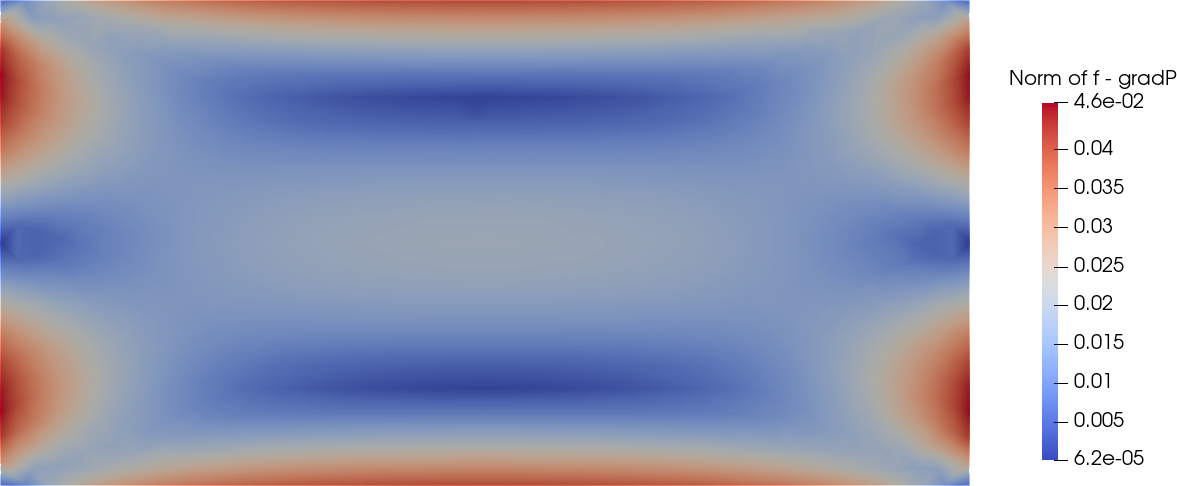}
		\caption*{$\|f'-\nabla_{x'}P\|$}
	\end{subfigure}

	\caption{Numerical solution of Darcy's law~\eqref{thm:system_gamma1}: pressure $P$, norm, horizontal and vertical components of the filtration velocity $V'$, and horizontal, vertical components, and norm of $f'-\nabla P$. Case of elliptic-based inclusions and Reynolds number equal to $1$.}
	\label{Fig:DarcyNumEllipseRe=1}
\end{figure}

\section*{Acknowledgments}

M.~Bonnivard was partially supported by the ANR Project Stoiques (ANR-24-CE40-2216).

\end{document}